\newtheorem{definition}{Definition}[section]
\newtheorem{lemma}[definition]{Lemma}
\newtheorem{theorem}[definition]{Theorem}
\newtheorem{proposition}[definition]{Proposition}
\newtheorem{corollary}[definition]{Corollary}
\newtheorem{remark}[definition]{Remark}
\newtheorem{example}[definition]{Example}
\def\QED{\hskip0.1em\hfill\null\ \null\nobreak\hfill
\kern3pt\lower1.8pt\vbox{\hrule\hbox {\vrule\kern1pt\vbox{\kern1.7pt
\hbox{$\scriptstyle QED$}\kern0.2pt}\kern1pt\vrule}\hrule}}
\def\fra{{\mathfrak C}}
\def\frg{{\frak g}}
\def\hr{{\rm hr}}
\newcommand{\hb}[1]{\hbox{#1}}
\newcommand{\SSS}{{\mathcal S}}
\newcommand{\M}{(M^{2n},\omega)}
\begin{document}

\title[]{\bf Symplectic harmonicity and generalized coeffective cohomologies}

\author{\bf Luis Ugarte}
\address[L. Ugarte]{Departamento de Matem\'aticas, IUMA, Universidad de Zaragoza\\
Campus Plaza San Francisco, 50009 Zaragoza, Spain}
\email{ugarte@unizar.es}

\author{\bf Raquel Villacampa}
\address[R. Villacampa]{Centro Universitario de la Defensa Zaragoza\\
IUMA, Universidad de Zaragoza\\
Academia General Militar,
Crta. de Huesca s/n, 50090 Zaragoza, Spain}
\email{raquelvg@unizar.es}


\maketitle

\bigskip

{\small {\noindent {\sf Abstract}.-  Relations between the symplectically harmonic cohomology and the coeffective cohomology
of a symplectic manifold are obtained.
This is achieved through a generalization of the latter, which in addition allows us to provide a
coeffective version of the filtered cohomologies introduced by C.-J. Tsai, L.-S. Tseng and S.-T. Yau.
We construct closed (simply connected) manifolds endowed with a family of symplectic forms $\omega_t$
such that the dimensions of these symplectic cohomology groups vary with respect to $t$.
A complete study of these cohomologies is given for 6-dimensional symplectic nilmanifolds, and concrete examples
with special cohomological properties are obtained on an $8$-dimensional solvmanifold and on 2-step nilmanifolds in higher dimensions.
} }

\medskip

{\small \noindent{\sf Keywords:} {\it Symplectic Hodge theory, coeffective cohomology, filtered and primitive cohomologies,
Lefschetz map.}}



\bigskip

\section{Introduction}

\noindent
Let $\M$ be a symplectic manifold.
The notion of \emph{symplectically harmonic form} was introduced by Brylinski in \cite{Br} as
a closed form $\alpha$ such that its symplectic star is also closed, i.e. $d\alpha=0=d*\alpha$.
Mathieu~\cite{Ma} proved (see also \cite{Yan} for a different proof)
that every de Rham cohomology class has a symplectically harmonic
representative if and only if $\M$ satisfies the Hard Lefschetz Condition (HLC for short), i.e.
the homomorphisms $L^k\colon H^{n-k}(M)\longrightarrow H^{n+k}(M)$ are surjective for every
$1\leq k\leq n$. Here $H^q(M)$ denotes the $q$-th de Rham cohomology group of $M$ and $L^k$ is the homomorphism
given by the cup product with the class $[\omega^k]\in H^{2k}(M)$.
Since there exist many symplectic manifolds which do not satisfy the HLC, one has that the quotient
$H^q_\hr(M)=\Omega^q_\hr(M)/(\Omega^q_\hr(M)\cap \text{im}\, d)$,
$\Omega^q_\hr(M)$ being the space of symplectically
harmonic $q$-forms, counts the de Rham cohomology classes in $H^q(M)$ containing harmonic representative.

Additional symplectic invariants of cohomological type were introduced by Bouch\'e \cite{B} as follows.
A differential form $\alpha$ is called \emph{coeffective} if it annihilates $\omega$, i.e. $\alpha\wedge\omega=0$.
The space of coeffective forms with the (restriction of the) exterior derivative provides
a subcomplex of the de Rham complex
that is elliptic in any degree $q\not= n$.
It turns out~\cite{B} that for compact K\"ahler manifolds $\M$ and for every $q\geq n+1$,
the $q$-th coeffective cohomology group, that we will denote here by $H^q_{(1)}(M)$,
is isomorphic to the \emph{$[\omega]$-truncated $q$-th de Rham group}.
However, this is no longer true for arbitrary compact symplectic manifolds~\cite{FIL1}.
On the other hand, Tseng and Yau have developed in \cite{TY1,TY2} a symplectic Hodge theory by
considering various cohomologies where the \emph{primitive} cohomologies $PH_{d+d^\Lambda}(M), PH_{dd^\Lambda}(M),
PH_{\partial_+}(M)$ and $PH_{\partial_-}(M)$ play a central role.
Recently, Eastwood~\cite{E} has introduced an extension of the coeffective
complex which is elliptic in any degree and such that the corresponding cohomology groups are isomorphic
to the primitive cohomology groups.

The symplectically harmonic cohomology and the coeffective cohomology, to our knowledge, have been studied separately in the literature.
Our first goal in this paper is
to obtain some relations between both cohomologies by considering a
natural generalization of the coeffective cohomology, which in addition will allow us to provide a
coeffective version of the \emph{filtered} cohomologies.
The latter have recently been introduced by Tsai, Tseng and Yau \cite{TY3},
and extend the primitive cohomologies \cite{TY1,TY2}.

Another aspect in the study of the symplectic harmonicity is the notion of \emph{flexibility}, 
motivated by the following question, which seems to be related to some problems of
group-theoretical hydrodynamics~\cite{AK}, posed by Khesin and McDuff (see \cite{Yan}):
which closed manifolds $M$ possess a continuous family $\omega_t$ of symplectic forms such
that the dimension of $H^q_\hr(M,\omega_t)$ varies with respect to $t$?
In~\cite{Yan} Yan proved the existence of a 4-dimensional flexible manifold, whereas in \cite{IRTU1}
several 6-dimensional nilmanifolds satisfying such property were found.
Recently, Cho has proved in \cite{Cho} the existence of simply-connected flexible examples of dimension six.
Our second goal in this paper is to relate the harmonic flexibility to corresponding notions
of flexibility for the generalized coeffective and filtered cohomologies,
as well as to construct closed manifolds which are flexible
with respect to these symplectic cohomologies.

In greater detail, the paper is structured as follows.

In Section~\ref{gen-coef-cohom}
we introduce and study the generalized coeffective cohomologies of a symplectic manifold $\M$.
For each integer $k$, $1\leq k\leq n$, we consider the complex of $k$-coeffective differential forms
as the subcomplex of de Rham one constituted by all the forms that annihilate $\omega^k$.
The associated cohomology groups are denoted by $H^q_{(k)}(M)$.
This complex is elliptic in any degree $q\not= n-k+1$, however one can define in a natural way
a quotient $\hat H^{n-k+1}(M)$ of $H^{n-k+1}_{(k)}(M)$ which shares the same properties as the cohomology groups
$H^q_{(k)}(M)$, $q\geq n-k+2$
(see Propositions~\ref{propiedades-finitas} and~\ref{propiedades-finitas-para-c-sombrero}). The spaces
$\hat H^{1}(M), \ldots, \hat H^{n}(M)$
play an important role in this paper since they will allow us to relate the
different symplectic cohomologies involved.
We will refer to the collection
\begin{equation}\label{todos-los-gen-coef-groups}
\hat H^{n-k+1}(M),\ H^{n-k+2}_{(k)}(M),\ldots,\ H^{2n}_{(k)}(M),\quad\ 1\leq k\leq n,
\end{equation}
as the \emph{generalized coeffective} cohomology groups of the symplectic manifold $\M$.
It turns out that these spaces are symplectic invariants that only depend on the
de Rham class $[\omega^k] \in H^{2k}(M)$
(see Remark~\ref{coef-inv-cohom-class} and Lemma~\ref{iso-equiv-coef}).
When $M$ is of finite type, in Proposition~\ref{invtopologicos} we prove that, for each $1\leq k\leq n$, the alternating
sum $\chi^{(k)}(M)$ of the dimensions of the generalized $k$-coeffective cohomology groups
only depends on the topology of the manifold $M$.

Eastwood~\cite{E} has introduced an elliptic extension of the usual coeffective
complex (i.e. $k=1$) such that the corresponding cohomology groups are isomorphic to primitive cohomology groups
defined by Tseng and Yau~\cite{TY1,TY2}.
In Section~\ref{exten-gen-coef-cohom},
for any $1\leq k\leq n$, we consider an extension of the $k$-coeffective complex,
which is also elliptic,
whose cohomology groups $\check{H}^q_{(k)}(M)$ $(0\leq q\leq 2n+2k-1)$
are isomorphic to the filtered cohomology groups introduced by Tsai, Tseng and Yau in \cite{TY3}
(see Remark~\ref{comparison with filtered} 
for details); in particular,
$$
\check{H}^{q}_{(1)}(M) \cong P H^q_{\partial_+}(M), \quad\
\check{H}^{2n-q+1}_{(1)}(M) \cong H^{2n-q}_{(1)}(M) \cong P H^q_{\partial_-}(M),
\quad\ 0\leq q\leq n-1,
$$
and
$$
\check{H}^{n+k-1}_{(k)}(M) \cong P H^{n-k+1}_{dd^\Lambda}(M), \quad\
\check{H}^{n+k}_{(k)}(M) \cong P H^{n-k+1}_{d+d^\Lambda}(M), \quad\ 1\leq k\leq n.
$$
In Proposition~\ref{propiedades-finitas-suc-check} we show that these extended cohomologies
also satisfy the main properties of the generalized coeffective cohomology groups.
When $M$ is of finite type, we consider $\check{\chi}^{(k)}_+(M)$ as the alternating sum of
the dimensions of the cohomology groups of the first half
of the extended complex, and in
Corollary~\ref{HLC-caracterizacion} we prove the following characterization of the HLC:
$\M$ satisfies the HLC if and only if
$\check{\chi}^{(k)}_+(M)=\chi^{(k)}(M)$ for every $1\leq k\leq n$.

In Section~\ref{relacion-con-armonica} 
we obtain some relations of the generalized coeffective cohomologies (and therefore also of the
filtered cohomologies) with the symplectically harmonic cohomology.
Concretely, using the description of $H^q_\hr(M)$ obtained in \cite{IRTU1,Ym,Yan}
we prove that the generalized coeffective cohomologies measure the differences between
the harmonic cohomology groups in the following sense:
if $\M$ is a symplectic manifold of finite type, then
$$
\dim H^{n-k+1}_\hr(M) - \dim H^{n+k+1}_\hr(M) = \dim \hat H^{n-k+1}(M)
$$
for every $k=1,\ldots,n$ (see Theorem~\ref{rel_armonica_coef}).
As a consequence, we find
the relation between
the dimension of the primitive cohomology group
$P H^q_{d+d^\Lambda}(M)$ and the harmonic cohomology for $q=1,2,3$.

We introduce in Section~\ref{flex} 
the notion of generalized coeffective flexibility and filtered flexibility,
as an analogous notion of the concept of harmonic flexibility.
We say that a closed smooth manifold $M^{2n}$ is \texttt{c}-\emph{flexible} (resp. \texttt{f}-\emph{flexible} or \texttt{h}-\emph{flexible})
if $M$ possesses a continuous family of symplectic forms $\omega_t$
such that the dimension of some generalized coeffective (resp. filtered or symplectically harmonic) cohomology group varies with $t$.
We prove in Theorem~\ref{dim4} that in four dimensions $M$ is never \emph{\texttt{c}}-flexible, and that
$M$ is \emph{\texttt{f}}-flexible if and only if it is \emph{\texttt{h}}-flexible.
This result allows us to prove, for each $n\geq 2$, the existence of $2n$-dimensional \emph{\texttt{f}}-flexible closed manifolds
having a continuous family of symplectic forms $\omega_t$
such that the dimension of the primitive cohomology group $P H^{2}_{d+d^\Lambda}(M,\omega_t)$
varies with respect to $t$ (see Theorem~\ref{existencia-f-flexible-en-cualquier-dimension}).
In Theorem~\ref{existencia-f-flexible-en-cualquier-dimension-simply-connected} we use a result in \cite{Cho}
to prove that,
for every $n\geq 3$, there exists a $2n$-dimensional simply-connected closed manifold $M$ with a continuous family $\omega_t$
for which the dimensions of the primitive groups $P H^{3}_{d+d^\Lambda}(M,\omega_t)$ and $P H^{3}_{dd^\Lambda}(M,\omega_t)$
vary with~$t$.
In Theorem~\ref{dim6} and Proposition~\ref{dim2n} we study flexibility in higher dimensions; in particular, it turns out that
in dimension $2n\geq 6$,
if $M$ is \emph{\texttt{c}}-flexible then $M$ is \emph{\texttt{f}}-flexible or \emph{\texttt{h}}-flexible.
This shows that coeffective flexibility is a stronger condition than the other flexibilities.

All the cohomology groups can be computed explicitly for symplectic solvmanifolds satisfying the Mostow condition,
in particular for any symplectic nilmanifold.
In Section~\ref{sec_ex} 
we consider the class of 6-dimensional nilmanifolds and compute the dimensions of all the cohomology groups for
any symplectic form. This extends the previous study for the symplectically harmonic cohomology given in \cite{IRTU1,IRTU2}.
As a consequence, we identify all the 6-dimensional nilmanifolds which are \emph{\texttt{c}}-flexible, \emph{\texttt{f}}-flexible or \emph{\texttt{h}}-flexible (see Table~1).
A solvmanifold of dimension 8
that is \emph{\texttt{c}}-flexible, \emph{\texttt{f}}-flexible and \emph{\texttt{h}}-flexible
is described in Section~\ref{8-dim-solvmanifold}.
{Section~\ref{Sakane-Yamada-sec} is devoted to symplectic $2$-step nilmanifolds and, based on
results by Sakane and Yamada \cite{SY-proc,Ym}, we obtain examples of arbitrary high dimension
which are \emph{\texttt{c}}-flexible, \emph{\texttt{f}}-flexible and \emph{\texttt{h}}-flexible.

\section{Generalized coeffective cohomologies}\label{gen-coef-cohom}

\noindent Let $\M$ be a symplectic manifold of dimension $2n$ and let $k$ be an integer such that $1\leq k\leq n$.
Next we introduce the notion of $k$-coeffective forms.

\begin{definition}\label{def-gen-coef}
{\rm
A $q$-form $\alpha$ on $M$ is said to be \emph{$k$-coeffective} if $\alpha$ annihilates the form $\omega^k$,
i.e. $\alpha\wedge \omega^k=0$.
The space of $k$-coeffective forms of degree $q$ will be denoted by $\fra^q_{(k)}(M,\omega)$, or simply $\fra^q_{(k)}(M)$.
}
\end{definition}

\begin{remark}\label{limit-cases}
{\rm
The above definition makes also sense in the ``limit'' case $k=n+1$ because $\omega^{n+1}=0$ and then $\fra^*_{(n+1)}(M)=\Omega^*(M)$.
Also the case $k=0$ makes sense if we consider
$\omega^0$ as the constant function 1, i.e. $\fra^*_{(0)}(M)=\{0\}$.
Thus, there exists the following strictly increasing sequence
of differential ideals
$$
\{0\}=\fra^*_{(0)}(M) \subset \fra^*_{(1)}(M) \subset \cdots \subset \fra^*_{(n)}(M)
\subset \fra^*_{(n+1)}(M) =\Omega^*(M).
$$
}
\end{remark}

Since for each $k$ the space $\fra^*_{(k)}(M)$ is closed by $d$, we can consider the \emph{$k$-coeffective complex}
\begin{equation}\label{complejo-k-coef}
\xymatrix{\cdots\ar [r]^-d &\fra^{q-1}_{(k)}(M)\ar[r]^d
&\fra^{q}_{(k)}(M)\ar[r]^d &\fra^{q+1}_{(k)}(M)\ar[r]^-d&\cdots},
\end{equation}
which is a subcomplex of the standard de Rham complex $(\Omega^*(M),d)$.

\begin{definition}\label{def-gen-coef-cohom}
{\rm
The \emph{$q$-th $k$-coeffective cohomology group} will be denoted by
$$H^q_{(k)}(M)=\frac{\ker\,\{d:\fra^q_{(k)}(M)\longrightarrow \fra^{q+1}_{(k)}(M)\}}
{\textrm{im}\,\{d:\fra^{q-1}_{(k)}(M)\longrightarrow \fra^{q}_{(k)}(M)\}}.
$$
}
\end{definition}

It is clear that the $k$-coeffective cohomology groups are invariant by symplectomorphism.
Moreover, we will show below that, for each $k$, they are invariants of the de Rham class $[\omega^k]\in H^{2k}(M)$.

Let $L_{\omega}^k\colon\Omega^*(M)\longrightarrow \Omega^{*}(M)$ be given by $L_{\omega}^k(\alpha)=\alpha\wedge \omega^{k}$.
Since $\fra^q_{(k)}(M) =\ker \{L_{\omega}^k:\Omega^q(M)\longrightarrow \Omega^{q+2k}(M)\}$ and
the map $L_{\omega}^k:\Omega^q(M)\longrightarrow\Omega^{q+2k}(M)$ is injective
for any $q\leq n-k$ and surjective for any $q\geq n-k$, one has that
$H^q_{(k)}(M)=0$ for $q\leq n-k$ and $H^q_{(k)}(M)\cong H^q(M)$ for every $q\geq 2n-2k+2$.

The short exact sequence
$$\xymatrix{0 \ar[r]
&\fra^*_{(k)}(M)\ar[r]^-i &\Omega^*(M)\ar[r]^-{L_{\omega}^k}
&L_{\omega}^k(\Omega^{*}(M)) \ar[r] &0,}$$
where $i$ denotes the inclusion,
provides the following short exact sequence of complexes
\xymatrixcolsep{2pc}\xymatrixrowsep{1.5pc} $$\xymatrix{
&0 &0 &0 \\
\cdots\ar[r]^-d &L_{\omega}^k(\Omega^{q-1}(M))\ar[r]^d\ar[u]
&L_{\omega}^k(\Omega^{q}(M))\ar[r]^d\ar[u] &L_{\omega}^k(\Omega^{q+1}(M))\ar[u]\ar[r]^-d &\cdots  \\
\cdots\ar[r]^-d &\Omega^{q-1}(M)\ar[r]^d\ar[u]^{L_{\omega}^k}
&\Omega^{q}(M)\ar[r]^d\ar[u]^{L_{\omega}^k}
&\Omega^{q+1}(M)\ar[u]^{L_{\omega}^k}\ar[r]^d &\cdots\\
\cdots\ar[r]^-d&\fra^{q-1}_{(k)}(M)\ar[r]^d\ar[u]^-i
&\fra^q_{(k)}(M)\ar[r]^d\ar[u]^-i
&\fra^{q+1}_{(k)}(M)\ar[u]^-i\ar[r]^d &\cdots \\
&0\ar[u] &0\ar[u] &0\ar[u] }$$

Now, since $L_{\omega}^k(\Omega^{q-2k}(M))=\Omega^q(M)$
for any $q\geq n+k$ we have that $H^{q}(L_{\omega}^k(\Omega^{*}(M)))=H^q(M)$ for $q\geq n+k+1$, and
therefore the associated long exact sequence in cohomology is
\xymatrixcolsep{3pc}\xymatrixrowsep{0.8pc}
$$\xymatrix{
0\ar[r]& H^{n-k}(M)\ar[r]^-{L^k} &H^{n+k}(L_{\omega}^k(\Omega^{*}(M))) \ar[r]^-{f_{n-k+1}} &H^{n-k+1}_{(k)}(M)&}$$
\begin{equation}\label{selarga}
\xymatrixcolsep{3.5pc}\xymatrix{
\ar[r]^-{H(i)}& H^{n-k+1}(M)\ar[r]^-{L^k} &H^{n+k+1}(M)\ar[r]^-{f_{n-k+2}}& H^{n-k+2}_{(k)}(M)&}
\end{equation}
\xymatrixcolsep{3pc}$$
\xymatrix{\ar[r]^-{H(i)}& H^{n-k+2}(M)\ar[r]^-{L^k} &H^{n+k+2}(M)\ar[r]^-{f_{n-k+3}}&H^{n-k+3}_{(k)}(M)\ \cdots\, ,&}
$$
where $H(i)$ and $L^k$ are the homomorphisms in cohomology naturally induced by $i$ and $L_{\omega}^k$, respectively,
and $f_{q}$
is the connecting homomorphism. Recall that
$f_{q}$ is defined by $f_{q}([\alpha])=[d\beta]$, where $\beta\in\Omega^{q-1}(M)$ is any $(q-1)$-form satisfying
$L_{\omega}^k(\beta)=\alpha$.

\begin{definition}\label{def-gen-coef-numbers}
{\rm
When the group $H^q_{(k)}(M)$ has finite dimension, we will
denote it by $c_q^{(k)}(M)$
and we shall refer to it as the \emph{$q$-th $k$-coeffective number} of $\M$.
}
\end{definition}

Notice that $c_q^{(k)}(M)=0$ for any $q\leq n-k$, and $c_q^{(k)}(M)= b_q(M)$ for any $q\geq 2n-2k+2$.

\medskip

In what follows, by a manifold of \emph{finite type} we mean a manifold, not necessarily compact,
such that its Betti numbers $b_q(M)=\dim H^q(M)$ are all finite.

\begin{proposition}\label{propiedades-finitas}
Let $\M$ be a symplectic manifold of finite type and let $1\leq k\leq n$.
Then, for every $q\geq n-k+2$, the following properties hold:
\begin{enumerate}
\item[{\rm (i)}] \emph{Finiteness and bounds for the coeffective numbers:} the group
$H^q_{(k)}(M)$ is finite dimensional and its dimension $c_q^{(k)}(M)$ satisfies
the inequalities
\begin{equation}\label{des}
b_q(M)-b_{q+2k}(M)\leq c_q^{(k)}(M)\leq b_q(M)+b_{q+2k-1}(M).
\end{equation}
\item[{\rm (ii)}] \emph{Symplectic manifolds satisfying the HLC:} if $\M$ satisfies the HLC then
the lower bound in \eqref{des} is attained, i.e.
$$
c_q^{(k)}(M)=b_q(M) - b_{q+2k}(M).
$$
\item[{\rm (iii)}] \emph{Exact symplectic manifolds:} if $\omega$ is exact then
the upper bound in \eqref{des} is attained, i.e.
$$
c_q^{(k)}(M)=b_q(M)+b_{q+2k-1}(M).
$$
\item[{\rm (iv)}] \emph{Poincar\'e lemma:} if $U$ is the open unit disk in $\mathbb{R}^{2n}$ with the standard
symplectic form $\omega=\sum_{i=1}^n dx^i\wedge dx^{n+i}$, then
$c_q^{(k)}(U)=0$.
\end{enumerate}
\end{proposition}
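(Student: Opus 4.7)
The cornerstone is the long exact sequence~(\ref{selarga}). For every $q\geq n-k+2$ the index $q+2k-1$ satisfies $q+2k-1\geq n+k+1$, so the identification $H^{q+2k-1}(L_\omega^k(\Omega^*(M)))\cong H^{q+2k-1}(M)$ is valid on both sides of $H^q_{(k)}(M)$. Extracting the five-term piece
\[
H^{q-1}(M)\xrightarrow{L^k}H^{q+2k-1}(M)\xrightarrow{f_q}H^q_{(k)}(M)\xrightarrow{H(i)}H^q(M)\xrightarrow{L^k}H^{q+2k}(M)
\]
yields a short exact sequence
\[
0\to \coker\!\bigl(L^k\colon H^{q-1}(M)\to H^{q+2k-1}(M)\bigr)\to H^q_{(k)}(M)\to \ker\!\bigl(L^k\colon H^q(M)\to H^{q+2k}(M)\bigr)\to 0,
\]
whence
\[
c_q^{(k)}(M)=\dim\coker\!\bigl(L^k|_{H^{q-1}(M)}\bigr)+\dim\ker\!\bigl(L^k|_{H^q(M)}\bigr).
\]
Finiteness of the Betti numbers forces finiteness of both terms, and the elementary estimates $0\leq\dim\coker\leq b_{q+2k-1}(M)$ together with $b_q(M)-b_{q+2k}(M)\leq\dim\ker\leq b_q(M)$ immediately give (i).

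For (ii), I would invoke the standard consequence of the HLC that, for every $s\geq 0$, the isomorphism $L^{k+s}\colon H^{n-k-s}(M)\to H^{n+k+s}(M)$ (trivial when $k+s>n$, and the HLC assertion otherwise) factors as $L^k\circ L^s$ and therefore forces $L^k\colon H^{n-k+s}(M)\to H^{n+k+s}(M)$ to be surjective. Writing $q=n-k+s$ with $s\geq 2$, this surjectivity holds at both $H^{q-1}(M)$ and $H^q(M)$: the cokernel term in the displayed dimension formula vanishes and $\dim\ker(L^k|_{H^q})=b_q(M)-b_{q+2k}(M)$, giving (ii). Statement (iii) is immediate from the same formula: if $\omega$ is exact then so is $\omega^k$, so $[\omega^k]=0\in H^{2k}(M)$ and $L^k$ acts as zero on de Rham cohomology; then $\ker(L^k|_{H^q(M)})=H^q(M)$ and $\coker(L^k|_{H^{q-1}(M)})=H^{q+2k-1}(M)$, realising the upper bound $b_q(M)+b_{q+2k-1}(M)$.

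Part (iv) requires no additional work: the unit disk $U$ is contractible, so $b_j(U)=0$ for every $j\geq 1$; since $q\geq n-k+2\geq 2$ we have $b_q(U)=b_{q+2k-1}(U)=0$, and the upper bound of (i) forces $c_q^{(k)}(U)=0$. The only delicate point in the whole argument is justifying that the relevant piece of~(\ref{selarga}) really has ordinary de Rham groups on both sides of $H^q_{(k)}(M)$; this is precisely where the hypothesis $q\geq n-k+2$ enters, and it also explains why the exceptional degree $q=n-k+1$ must be handled separately through the quotient $\hat H^{n-k+1}(M)$ of the subsequent Proposition~\ref{propiedades-finitas-para-c-sombrero}. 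Once this bookkeeping is in place, the rest of the proof is just the observation that $c_q^{(k)}(M)$ measures the simultaneous failure of $L^k$ to be surjective in degree $q-1$ and injective in degree $q$.
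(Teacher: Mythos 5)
Your proof is correct and follows essentially the same route as the paper: both extract the five-term exact piece of the long exact sequence \eqref{selarga} around $H^q_{(k)}(M)$ and read off the dimension count, your kernel/cokernel phrasing being equivalent to the paper's bookkeeping with the images of the connecting homomorphisms $f_q$ and $f_{q+1}$. The only immaterial deviations are that you spell out the factorization $L^{k+s}=L^k\circ L^s$ behind the surjectivity needed for (ii), and that you deduce (iv) from the upper bound in (i) rather than from (iii) as the paper does; both reduce to $b_q(U)=b_{q+2k-1}(U)=0$ by contractibility.
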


\begin{proof}
From the long exact sequence \eqref{selarga}, one has for any $q\geq n-k+2$ the five-term exact sequence
\xymatrixcolsep{1.6pc}
\begin{equation}\label{5terminos}
\xymatrix{0\ar[r]&\text{im}\,f_{q}\, \hookrightarrow\, H^q_{(k)}(M)\ar[r]^-{H(i)}&
H^q(M)\ar[r]^-{L^k}&H^{q+2k}(M)\ar[r]^-{f_{q+1}\,\,}&\,\text{im}\,f_{q+1}\ar[r]&0}.
\end{equation}
If the manifold is of finite type then it is clear that $H^q_{(k)}(M)$ has finite dimension for any $q\geq n-k+2$.
Moreover, taking dimensions in \eqref{5terminos}
$$
\xymatrix{c_q^{(k)}(M)
=\text{dim (im}\,f_{q})+b_q(M) -b_{q+2k}(M)+\text{dim (im}\,f_{q+1})}\!,
$$
which implies the inequalities \eqref{des}. This completes the proof of {\rm (i)}.

Property {\rm (ii)} is a direct consequence of \eqref{5terminos} taking into account that HLC implies that
$L^k\colon H^{q-1}(M)\longrightarrow H^{q+2k-1}(M)$ are surjective and then the connecting homomorphisms
$f_{q}$ vanish for every $q\geq n-k+2$.

Property {\rm (iii)} is a consequence of \eqref{5terminos} since
$L^k\colon H^{q-1}(M)\longrightarrow H^{q+2k-1}(M)$ are identically zero because~$\omega$ is exact,
and then the connecting homomorphisms $f_{q}$ are injective for every $q\geq n-k+2$.

Finally {\rm (iv)} is a direct consequence of {\rm (iii)} since $\omega$ is exact on $U$.
\end{proof}

Notice that for $k=1$ the previous proposition was proved by Fern\'andez, Ib\'a\~nez and de Le\'on in \cite{FIL2}.
It is easy to check (see \cite{B} for $k=1$) that, for each $1\leq k\leq n$, the $k$-coeffective complex
\eqref{complejo-k-coef} is elliptic in any degree $q\not= n-k+1$.
The coeffective group $H^{n-k+1}_{(k)}(M)$ can be infinite dimensional, however in view of the sequence \eqref{selarga}
there is a natural quotient of this coeffective group by considering
the (in general also infinite dimensional) space $H^{n+k}(L_{\omega}^k(\Omega^{*}(M)))$.
We will see below that such quotient has finite dimension on symplectic manifolds of finite type.

\begin{definition}
Let us consider the space
$$\hat H^{n-k+1}(M):=
\frac{H^{n-k+1}_{(k)}(M)}{H^{n+k}(L_{\omega}^k(\Omega^{*}(M)))/H^{n-k}(M)}.$$
If its dimension is finite, then we will denote it by
$\hat c_{n-k+1}(M)$.
\end{definition}

Hence, we have an additional collection of $n$ symplectic invariants given by $\hat H^{n-k+1}(M)$ for $k=1,\ldots,n$,
that is,
$$\hat H^{n}(M),\  \hat H^{n-1}(M), \ldots, \hat H^{2}(M),\ \hat H^{1}(M).$$
From now on, we will refer to the collection \eqref{todos-los-gen-coef-groups}
as the \emph{generalized coeffective cohomology groups} of the symplectic manifold $\M$.

\medskip

Consider the short exact sequence
\xymatrixcolsep{2.35pc}
$$\xymatrix{
0\ar[r]&\hat H^{n-k+1}(M)\ar[r]^{\hat{\!\emph{\i}}}&H^{n-k+1}(M)\ar[r]^-{L^k}&
H^{n+k+1}(M)\ar[r]^-{f_{n-k+2}\,\,}& \text{im}\,f_{n-k+2}\ar[r]&0},
$$
where $\hat{\!\emph{\i}}$ is the homomorphism naturally induced by $H(i)$ in \eqref{selarga}.
Since $\hat{\!\emph{\i}}$ is injective, it is clear that $\hat H^{n-k+1}(M)$ is finite dimensional whenever
$H^{n-k+1}(M)$ is, and in such case we have
$$
\hat c_{n-k+1}(M)=b_{n-k+1}(M)-b_{n+k+1}(M)+ \dim (\text{im}\,f_{n-k+2}).
$$
Therefore, the properties obtained in Proposition~\ref{propiedades-finitas} extend to the space $\hat H^{n-k+1}(M)$
as follows:

\begin{proposition}\label{propiedades-finitas-para-c-sombrero}
Let $\M$ be a symplectic manifold of finite type and let $1\leq k\leq n$.
The following properties hold for $\hat c_{n-k+1}(M)$:
\begin{enumerate}
\item[{\rm (i)}] \emph{Finiteness and bounds for the coeffective number $\hat c_{n-k+1}(M)$:} the space
$\hat H^{n-k+1}(M)$ is finite dimensional and its dimension satisfies
the inequalities
\begin{equation}\label{des-para-c-sombrero}
b_{n-k+1}(M) - b_{n+k+1}(M)\leq \hat c_{n-k+1}(M)\leq b_{n-k+1}(M).
\end{equation}
\item[{\rm (ii)}] \emph{Symplectic manifolds satisfying the HLC:} if $\M$ satisfies the HLC then
the lower bound in \eqref{des-para-c-sombrero} is attained, i.e.
$$
\hat c_{n-k+1}(M)=b_{n-k+1}(M)-b_{n+k+1}(M).
$$
\item[{\rm (iii)}] \emph{Exact symplectic manifolds:} if $\omega$ is exact then
the upper bound in \eqref{des-para-c-sombrero} is attained, i.e.
$$
\hat c_{n-k+1}(M)=b_{n-k+1}(M).
$$
\item[{\rm (iv)}] \emph{Poincar\'e lemma:} if $U$ is the open unit disk in $\mathbb{R}^{2n}$ with the standard
symplectic form $\omega=\sum_{i=1}^n dx^i\wedge dx^{n+i}$, then
$\hat c_{n-k+1}(U)=0$.
\end{enumerate}
\end{proposition}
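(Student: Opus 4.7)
The plan is to read everything off the short exact sequence
\[
0 \to \hat H^{n-k+1}(M) \xrightarrow{\hat\imath} H^{n-k+1}(M) \xrightarrow{L^k} H^{n+k+1}(M) \xrightarrow{f_{n-k+2}} \text{im}\,f_{n-k+2} \to 0
\]
and the dimension identity
\[
\hat c_{n-k+1}(M)=b_{n-k+1}(M)-b_{n+k+1}(M)+\dim\,\text{im}\,f_{n-k+2}
\]
that have just been established in the paragraph preceding the statement. By exactness at $H^{n+k+1}(M)$ one has $\dim\,\text{im}\,f_{n-k+2}=b_{n+k+1}(M)-\dim\,\text{im}\,L^k$, so the identity rewrites in the compact form
\[
\hat c_{n-k+1}(M)=b_{n-k+1}(M)-\dim\,\text{im}\,L^k,
\]
where $L^k$ stands for the induced map $H^{n-k+1}(M)\to H^{n+k+1}(M)$. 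Each of the four assertions then reduces to bounding or computing the single quantity $\dim\,\text{im}\,L^k$.

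For (i), the injectivity of $\hat\imath$ embeds $\hat H^{n-k+1}(M)$ into the finite-dimensional space $H^{n-k+1}(M)$, giving finiteness, and the bounds \eqref{des-para-c-sombrero} follow at once from the trivial chain $0\le\dim\,\text{im}\,L^k\le b_{n+k+1}(M)$. For (iii), if $\omega$ is exact, say $\omega=d\theta$, then $\omega^k=d(\theta\wedge\omega^{k-1})$ is exact as well, so $L^k$ vanishes on cohomology, $\dim\,\text{im}\,L^k=0$, and the upper bound is attained.

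For (ii), I would argue that the HLC forces $L^k\colon H^{n-k+1}(M)\to H^{n+k+1}(M)$ to be surjective, whence $\dim\,\text{im}\,L^k=b_{n+k+1}(M)$ and the lower bound in \eqref{des-para-c-sombrero} is reached. When $k\le n-1$ the HLC supplies an isomorphism $L^{k+1}\colon H^{n-k-1}(M)\to H^{n+k+1}(M)$, and every class in the target is then $L^{k+1}\beta=L^k(L\beta)$ with $L\beta\in H^{n-k+1}(M)$; the boundary case $k=n$ is automatic since $H^{2n+1}(M)=0$. Finally (iv) is an immediate corollary of (iii): on the open unit disk $U\subset\mathbb{R}^{2n}$, the standard symplectic form is exact (e.g.\ $\omega=d\bigl(\sum_i x^i\,dx^{n+i}\bigr)$), and $U$ is contractible with $n-k+1\ge 1$, so $\hat c_{n-k+1}(U)=b_{n-k+1}(U)=0$.

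I do not expect a genuine obstacle, since the proof is essentially a dimension count on an exact sequence that is already on the page. The one step worth pausing over is the reduction from HLC (which concerns $L^k\colon H^{n-k}\to H^{n+k}$) to surjectivity of $L^k$ between the shifted degrees $n-k+1$ and $n+k+1$; the short argument via $L^{k+1}=L^k\circ L$ above handles it, but this is the place where care is needed.
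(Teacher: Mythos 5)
Your proof is correct and follows essentially the same route as the paper, which derives all four properties by a dimension count on the short exact sequence displayed just before the statement (mirroring the proof of Proposition~\ref{propiedades-finitas}). The one step the paper leaves implicit --- that the HLC, stated for $L^k\colon H^{n-k}(M)\to H^{n+k}(M)$, forces surjectivity of $L^k$ in the shifted degrees via the factorization $L^{k+1}=L^k\circ L$ --- is exactly the point you flag and handle correctly.
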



Inspired by the definition of the Euler characteristic of a manifold, we define the following symplectic invariants:

\begin{definition}\label{chi-k}
Let $\M$ be a symplectic manifold of finite type. For each $1\leq k\leq n$, we define
$$\chi^{(k)}(M) = (-1)^{n-k+1} \hat c_{n-k+1}(M)+ \sum^{2n}_{i=n-k+2}(-1)^i \, c_i^{(k)}(M).$$
\end{definition}

The next proposition shows that each $\chi^{(k)}(M)$ is actually a topological invariant of the manifold.

\begin{proposition}\label{invtopologicos}
Let $(M^{2n},\omega)$ be a symplectic manifold of finite type. For any $1\leq k\leq n$,
$$\chi^{(k)}(M)=\sum_{r=n-k+1}^{n+k}(-1)^{r} \, b_{r}(M).$$
\end{proposition}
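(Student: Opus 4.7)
The strategy is to read $\chi^{(k)}(M)$ off the long exact sequence \eqref{selarga}, using the fact that the alternating sum of dimensions across any finite exact sequence vanishes.

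First I would replace the opening segment of \eqref{selarga} (involving the possibly infinite-dimensional $H^{n+k}(L_\omega^k(\Omega^*(M)))$) by the single finite-dimensional term $\hat H^{n-k+1}(M)$. Concretely, the short exact sequence stated just after the definition of $\hat H^{n-k+1}(M)$ shows that $\hat H^{n-k+1}(M)$ embeds into $H^{n-k+1}(M)$ with image equal to $\ker L^k$; splicing this inclusion with the rest of \eqref{selarga} yields the exact sequence
\begin{equation*}
0\to\hat H^{n-k+1}(M)\hookrightarrow H^{n-k+1}(M)\stackrel{L^k}{\longrightarrow}H^{n+k+1}(M)\to H^{n-k+2}_{(k)}(M)\to H^{n-k+2}(M)\stackrel{L^k}{\longrightarrow}\cdots,
\end{equation*}
which continues periodically in blocks of three, $H^q(M)\stackrel{L^k}{\longrightarrow}H^{q+2k}(M)\to H^{q+1}_{(k)}(M)$. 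Since both $b_q(M)$ and $c_q^{(k)}(M)$ vanish for $q>2n$, only finitely many terms are non-zero, so this exact sequence is effectively finite.

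Equating the alternating sum of dimensions (with leading sign $+$ on $\hat H^{n-k+1}(M)$) to zero and reindexing the blocks by $q=n-k+j$, each triple contributes the signed combination $(-1)^j\bigl(b_q(M)-b_{q+2k}(M)+c_{q+1}^{(k)}(M)\bigr)$. Since $(-1)^j=(-1)^{n-k}(-1)^q$, multiplying through by $(-1)^{n-k}=-(-1)^{n-k+1}$ and rearranging gives
\begin{equation*}
(-1)^{n-k+1}\hat c_{n-k+1}(M)+\sum_{q=n-k+2}^{2n}(-1)^q c_q^{(k)}(M)=\sum_{q=n-k+1}^{2n}(-1)^q b_q(M)-\sum_{q=n+k+1}^{2n}(-1)^q b_q(M).
\end{equation*}
The left-hand side is $\chi^{(k)}(M)$ by definition, and the right-hand side telescopes to $\sum_{q=n-k+1}^{n+k}(-1)^q b_q(M)$, completing the identification.

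The main (largely bookkeeping) obstacle is the careful sign tracking along the long exact sequence, together with the index shift $q\mapsto q+2k$ when reindexing the Betti contributions so that the tail from $n+k+1$ to $2n$ is cancelled. One also needs to verify that the infinite-dimensional piece $H^{n+k}(L_\omega^k(\Omega^*(M)))/H^{n-k}(M)$ at the start of \eqref{selarga} is correctly absorbed into the definition of $\hat H^{n-k+1}(M)$, so that the truncation is legitimate, and that the vanishing $H^q(M)=0$ for $q>2n$ (and the corresponding stabilisation $c_q^{(k)}(M)=b_q(M)$ for $q\ge 2n-2k+2$) genuinely truncates the sum.
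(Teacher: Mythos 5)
Your proposal is correct and follows essentially the same route as the paper: both arguments take the alternating sum of dimensions along the long exact sequence \eqref{selarga}, with the initial (possibly infinite-dimensional) segment replaced by $\hat H^{n-k+1}(M)$ via the short exact sequence given after its definition, and then reindex so that the Betti-number tail from $n+k+1$ to $2n$ cancels. The sign bookkeeping you describe matches the paper's computation term for term.
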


\begin{proof}
The long exact sequence \eqref{selarga} implies
$$
0=\hat c_{n-k+1}-b_{n-k+1}+b_{n+k+1}+\sum_{j=2}^{n+k}(-1)^{j-1} (c_{n-k+j}^{(k)}-b_{n-k+j}+b_{n+k+j}).
$$
Writing this sum in terms of $\chi^{(k)}$ we get
$$
(-1)^{n-k+1} \chi^{(k)} + \sum_{j=1}^{n+k}(-1)^j b_{n-k+j} - \sum_{j=1}^{n+k}(-1)^j b_{n+k+j}=0.
$$
Since $b_i=0$ for $i\geq 2n+1$, the previous equality reduces to:
\begin{eqnarray*}
0&=&(-1)^{n-k+1} \chi^{(k)} + \sum_{j=1}^{n+k}(-1)^j b_{n-k+j} - \sum_{j=1}^{n-k}(-1)^j b_{n+k+j}\\
&=& (-1)^{n-k+1} \chi^{(k)} + \sum_{r=n-k+1}^{2n}(-1)^{r-n+k}\, b_{r} - \sum_{r=n+k+1}^{2n}(-1)^{r-n-k}\, b_{r}.
\end{eqnarray*}
Equivalently,
\begin{eqnarray*}
 \chi^{(k)}&=&\sum_{r=n-k+1}^{2n}(-1)^{r} b_{r} - \sum_{r=n+k+1}^{2n}(-1)^{r} b_{r} = \sum_{r=n-k+1}^{n+k}(-1)^{r} b_{r}.
\end{eqnarray*}
\end{proof}

Observe that the Euler characteristic of $M$ is recovered if we allow $k=n+1$ (see Remark~\ref{limit-cases}).

\medskip

\begin{remark}\label{coef-inv-cohom-class}
{\rm
It is clear from the long exact sequence \eqref{selarga} that, for each $k$, the generalized $k$-coeffective cohomology groups
\eqref{todos-los-gen-coef-groups}
are invariants of the de Rham cohomology class $[\omega^k]$ given by the cap product of $[\omega]$ by itself $k$ times.
Even more, if $[\omega^k]\not=0$ and we denote by $[[\omega^k]]$ the
corresponding element in $\mathbb{P}(H^{2k}(M))$, then all the
generalized $k$-coeffective groups
are invariants of $[[\omega^k]]$.
In conclusion, if $\M$ is a symplectic manifold and $\omega^k$ are not exact, then
the generalized coeffective cohomologies only depend on the element $[[\omega]]\in \mathbb{P}(H^{2}(M))$.

}
\end{remark}

From this remark it follows

\begin{lemma}\label{iso-equiv-coef}
Let $F\colon (M,\omega)\longrightarrow (M',\omega')$ be a
diffeomorphism such that $F^*[\omega']=\lambda [\omega]$ for some non-zero $\lambda\in \mathbb{R}$.
Then, for any $1\leq k\leq n$,
$\hat H^{n-k+1}(M')\cong \hat H^{n-k+1}(M)$
and
$H^{q}_{(k)}(M')\cong H^{q}_{(k)}(M)$
for every $q\geq n-k+2$.
\end{lemma}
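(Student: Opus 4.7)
The plan is to use the diffeomorphism $F$ to reduce the assertion to a statement about two symplectic forms on the \emph{same} manifold $M$, and then to apply the cohomology-class invariance recorded in Remark~\ref{coef-inv-cohom-class}.

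First, pullback by $F$ is a cochain isomorphism $F^*\colon(\Omega^*(M'),d)\to(\Omega^*(M),d)$. Since $F^*(\omega'^k)=(F^*\omega')^k$, it sends $k$-coeffective forms on $(M',\omega')$ to $k$-coeffective forms on $(M,F^*\omega')$ and restricts to an isomorphism of subcomplexes $\fra^*_{(k)}(M',\omega')\xrightarrow{\sim}\fra^*_{(k)}(M,F^*\omega')$; more generally, the whole short exact sequence of complexes underlying the long exact sequence \eqref{selarga} for $(M',\omega')$ pulls back isomorphically to that of $(M,F^*\omega')$. Consequently $F^*$ induces isomorphisms in every degree between the two long exact sequences and commutes with the connecting maps $f_q$, giving $H^q_{(k)}(M',\omega')\cong H^q_{(k)}(M,F^*\omega')$ for every $q$ and, by passing to the quotient, $\hat H^{n-k+1}(M',\omega')\cong\hat H^{n-k+1}(M,F^*\omega')$. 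It therefore suffices to prove the isomorphisms on the same manifold $M$ for the two forms $\eta:=F^*\omega'$ and $\omega$, which by hypothesis satisfy $[\eta]=\lambda[\omega]$ with $\lambda\neq 0$.

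For this reduced problem, note that $[\eta^k]=\lambda^k[\omega^k]$ in $H^{2k}(M)$ and $\lambda^k\neq 0$, so the induced Lefschetz operators satisfy $L^k_\eta=\lambda^k\,L^k_\omega$ and have literally the same kernels and cokernels on de Rham cohomology. By the five-term exact sequence \eqref{5terminos}, for each $q\geq n-k+2$ both $H^q_{(k)}(M,\eta)$ and $H^q_{(k)}(M,\omega)$ fit into the same short exact sequence
\[
0\to\coker\bigl(L^k\colon H^{q-1}(M)\to H^{q+2k-1}(M)\bigr)\to H^q_{(k)}(M)\to\ker\bigl(L^k\colon H^q(M)\to H^{q+2k}(M)\bigr)\to 0,
\]
which splits because we are working with $\mathbb{R}$-vector spaces, so both middle groups are isomorphic to the direct sum of the outer ones. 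Likewise, the exact sequence displayed before Proposition~\ref{propiedades-finitas-para-c-sombrero} identifies $\hat H^{n-k+1}(M)$ with $\ker(L^k\colon H^{n-k+1}(M)\to H^{n+k+1}(M))$, which is the same subspace of $H^{n-k+1}(M)$ for $\eta$ and for $\omega$. The only mildly delicate point is the splitting of the middle short exact sequence, which is automatic over a field, so no finite-dimensionality assumption or Moser-type isotopy is needed.
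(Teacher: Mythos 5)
Your proof is correct and follows essentially the same route as the paper: the paper's entire proof is the observation (Remark~\ref{coef-inv-cohom-class}) that the generalized coeffective groups are read off from the long exact sequence \eqref{selarga}, which depends only on the Lefschetz map $L^k$ and hence only on $[\omega^k]$ up to a nonzero scalar. Your write-up merely supplies the details the paper leaves implicit --- the pullback reduction to a single manifold and the splitting of the five-term sequence over $\mathbb{R}$ --- and correctly notes that the splitting argument works without any finite-type hypothesis.
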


\medskip

Notice that it suffices to know the de Rham cohomology of $M$ together with the action of $L^k$ on it, in order
to know the generalized $k$-coeffective cohomology. This can be applied in particular to solvmanifolds satisfying the Mostow condition \cite{Mostow},
that is to say, to compact quotients $G/\Gamma$ of solvable
Lie groups $G$ by a lattice $\Gamma$ satisfying that the algebraic
closures $\mathcal{A}(\rm{Ad}_G(G))$ and $\mathcal{A}(\rm{Ad}_G(\Gamma))$ are equal.
In fact, under this condition one has that the natural map $(\bigwedge^* \frg^*,d) \hookrightarrow (\Omega^* (M),d)$
from the Chevalley-Eilenberg complex of the Lie algebra $\frg$ of $G$ to the de Rham complex of the
solvmanifold $M=G/\Gamma$ is a quasi-isomorphism, i.e. $H^q(M)\cong H^q(\frg)$ for any $0\leq q\leq \dim M$.
The following result is straightforward from the long exact sequence in cohomology:

\begin{proposition}\label{calculo-coef-solvariedades}
Let $(M=G/\Gamma,\omega)$ be a $2n$-dimensional symplectic solvmanifold satisfying the Mostow condition.
Let $\frg$ be the Lie algebra of $G$ and let $\omega'\in \bigwedge^2 \frg^*$ be a left-invariant symplectic form
representing the de Rham class $[\lambda \omega]\in H^2(M)$ for some $\lambda\not=0$.
Then, for any $1\leq k\leq n$, the inclusion $\bigwedge^* \frg^* \hookrightarrow \Omega^* (M)$ induces
isomorphisms
$\hat H^{n-k+1}(M,\omega)\cong \hat H^{n-k+1}(\frg,\omega')$
and
$H^{q}_{(k)}(M,\omega)\cong H^{q}_{(k)}(\frg,\omega')$
for every $q\geq n-k+2$.
\end{proposition}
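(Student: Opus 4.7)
The plan is to reduce to a left-invariant symplectic representative via Lemma~\ref{iso-equiv-coef}, and then apply the five-lemma to the ladder of long exact sequences \eqref{selarga} induced by the Chevalley--Eilenberg inclusion. Concretely, since $\omega'\in\bigwedge^2\frg^*$ satisfies $[\omega']=[\lambda\omega]$ in $H^2(M)$ with $\lambda\neq 0$, Lemma~\ref{iso-equiv-coef} applied with $F=\mathrm{id}$ yields $\hat H^{n-k+1}(M,\omega)\cong\hat H^{n-k+1}(M,\omega')$ and $H^q_{(k)}(M,\omega)\cong H^q_{(k)}(M,\omega')$ for every $q\geq n-k+2$. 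Hence it suffices to compare the cohomologies of $(M,\omega')$ with those of $(\frg,\omega')$. The crucial benefit of this reduction is that $L^k_{\omega'}$ strictly preserves the Chevalley--Eilenberg subcomplex, so the inclusion $\iota\colon\bigwedge^*\frg^*\hookrightarrow\Omega^*(M)$ is a morphism of differential graded algebras commuting with $L^k_{\omega'}$ and mapping $\fra^*_{(k)}(\frg)$ into $\fra^*_{(k)}(M)$.

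Consequently $\iota$ induces a morphism between the two short exact sequences of complexes that produce \eqref{selarga}, and hence a commutative ladder between the two long exact sequences. The Mostow condition asserts that $\iota$ is a quasi-isomorphism in de Rham cohomology, i.e.\ $H^q(\frg)\cong H^q(M)$ for every $q$. Since $L^k_{\omega'}(\Omega^*(\cdot))$ is the quotient $\Omega^*(\cdot)/\fra^*_{(k)}(\cdot)$, the same Mostow comparison, read through the long exact sequence of cohomology for the quotient, also shows that $H^q(L^k_{\omega'}(\bigwedge^*\frg^*))\to H^q(L^k_{\omega'}(\Omega^*(M)))$ is an isomorphism in every degree. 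For each $q\geq n-k+2$ one then applies the five-lemma to the five consecutive terms of the ladder $H^{q-1}(\cdot)\to H^{q+2k-1}(\cdot)\to H^q_{(k)}(\cdot)\to H^q(\cdot)\to H^{q+2k}(\cdot)$, whose four outer vertical arrows are isomorphisms, to deduce $H^q_{(k)}(\frg)\cong H^q_{(k)}(M)$. The same five-lemma argument applied to the exact sequence displayed right before Proposition~\ref{propiedades-finitas-para-c-sombrero} (its outer terms being $H^{n-k+1}$, $H^{n+k+1}$ and the image of the connecting map $f_{n-k+2}$, all isomorphic between $\frg$ and $M$) gives $\hat H^{n-k+1}(\frg)\cong\hat H^{n-k+1}(M)$.

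The only delicate point is the compatibility of $L^k$ with the subcomplex inclusion, which is precisely what forces the preliminary passage to the left-invariant representative $\omega'$ via Lemma~\ref{iso-equiv-coef}; once that is in place, the proof is a routine diagram chase combining the Mostow quasi-isomorphism with two applications of the five-lemma. This matches the paper's statement that the result is straightforward from the long exact sequence in cohomology.
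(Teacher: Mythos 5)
Your proof is correct and is essentially the paper's intended argument: the paper offers no proof beyond ``straightforward from the long exact sequence in cohomology,'' and your reduction to a left-invariant representative via Lemma~\ref{iso-equiv-coef}, followed by the ladder of sequences \eqref{selarga} and the five-lemma (with all four outer terms being de Rham groups, identified by Mostow, for $q\geq n-k+2$, and with $\hat H^{n-k+1}\cong\ker\{L^k\colon H^{n-k+1}\to H^{n+k+1}\}$), is exactly that fleshed out. One sentence is an overreach — the claim that $H^q(L^k_{\omega'}(\bigwedge^*\frg^*))\to H^q(L^k_{\omega'}(\Omega^*(M)))$ is an isomorphism \emph{in every degree} does not follow from Mostow alone (that comparison is entangled with the coeffective cohomology you are computing) — but it is also unnecessary, since in all degrees $j\geq n+k+1$ that your five-term segments actually use, $H^j(L^k_{\omega'}(\Omega^*))$ coincides with $H^j(M)$ by surjectivity of $L^k_{\omega'}$, so the argument stands.
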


In particular, the previous result holds for nilmanifolds \cite{Nomizu} and in the
completely solvable case \cite{Hattori}, i.e. when the adjoint representation ${\rm ad}_X$ has
only real eigenvalues for all $X \in \frg$.

Note that for the usual coeffective cohomology, i.e. $k=1$ and $q\geq n+1$, this result was proved in \cite{FIL1} (see also \cite{FIL2}).

\medskip

\begin{remark}\label{cohom-mas-en-general}
{\rm
For other results on the de Rham cohomology of compact solvmanifolds $G/\Gamma$, even in the case that the solvable Lie group
$G$ and the lattice $\Gamma$ do not satisfy the Mostow condition, see \cite{CF,Guan}.
Notice that for infra-solvmanifolds Baues proved in \cite{Baues} an analogous result to Nomizu's theorem
about the isomorphism of its cohomology and that of a certain complex of left-invariant forms,
result that is used 
in \cite{Kasuya} to study the $1$-coeffective cohomology of certain symplectic aspherical manifolds.
}
\end{remark}

\medskip

\begin{remark}\label{k-truncada}
{\rm
For general symplectic manifolds (not necessarily of finite type) one has for every $q\geq n-k+2$
the following isomorphism
$$
\frac{H^{q}_{(k)}(M)}{H^{q+2k-1}(M)/L^k(H^{q-1}(M))} \cong \ker \{L^k\colon H^{q}(M)\longrightarrow H^{q+2k}(M) \}.
$$
In particular, if the HLC is satisfied then $H^{q}_{(k)}(M) \cong \ker \{L^k\colon H^{q}(M)\longrightarrow H^{q+2k}(M) \}$.
Since any compact K\"ahler manifold satisfies the HLC, we conclude that, for
any $q\geq n-k+2$, the $k$-coeffective group $H^{q}_{(k)}(M)$ is isomorphic to the space of de Rham cohomology classes that
annihilate the class $[\omega^k]$.
For $k=1$ this result was proved by Bouch\'{e} in \cite{B}, where he refers to the latter groups as the \emph{truncated de Rham groups}.
In \cite{FIL1,Kasuya} the relation of the 1-coeffective cohomology
with the truncated de Rham cohomology is also investigated.
}
\end{remark}

\section{Extension of the generalized coeffective complexes}\label{exten-gen-coef-cohom}

\noindent
In this section, for any $1\leq k\leq n$, we consider an extension of the $k$-coeffective complex,
which is elliptic and
whose cohomology groups will be isomorphic to the \emph{filtered} cohomology groups introduced by Tsai, Tseng and Yau in \cite{TY3}.
For $k=1$ such extension was constructed by Eastwood in~\cite{E}.

Let us fix $k$ such that $1\leq k\leq n$.
For each $q$, let us consider the quotient space
$\check{\Omega}^q_{(k)}(M)=\frac{\Omega^{q}(M)}{L_{\omega}^k(\Omega^{q-2k}(M))}$.
We denote by
$\check{d}\colon \check{\Omega}^q_{(k)}(M)\longrightarrow \check{\Omega}^{q+1}_{(k)}(M)$
the natural map induced by the exterior differential, i.e. $\check{d}(\check{\alpha})=(d\alpha)\!\check{\phantom{i}}=d\alpha+L_{\omega}^k(\Omega^{q-2k+1}(M))$,
for any $\check{\alpha} \in \check{\Omega}^q_{(k)}(M)$.
Then, we have the following complex
\xymatrixcolsep{1.4pc}\xymatrixrowsep{2pc}
\begin{equation}\label{complejo_nuevo_k}
\xymatrix{
0\ar[r]&\ar[r]\Omega^0\ar[r]^-{d}&\Omega^1\ar[r]^-{d}&
\cdots\ar[r]^-{d}&\Omega^{2k-1}\ar[r]^-{\check{d}}&
\check{\Omega}^{2k}_{(k)}\ar[r]^-{\check{d}}&\cdots\ar[r]^-{\check{d}} &
\check{\Omega}^{n+k-2}_{(k)}\ar[r]^-{\check{d}}
&\check{\Omega}^{n+k-1}_{(k)}\ar[d]^{D}\\
0
&\Omega^{2n}\ar[l]&\Omega^{2n-1}\ar[l]_-{\ d}
&\cdots\ar[l]_-{\ d}&\Omega^{2n-2k+1}\ar[l]_-{\ d} &
\fra^{2n-2k}_{(k)}\ar[l]_-{\ d} &
\cdots\ar[l]_-{\ d}
&\fra^{n-k+2}_{(k)}\ar[l]_-{\ d}
&\fra^{n-k+1}_{(k)}\ar[l]_-{\ d}}
\end{equation}
where $D$ is a second-order differential operator defined as $D(\check{\alpha}) = d\gamma$,
being $\gamma$ the unique $(n-k)$-form satisfying $d\alpha=L_{\omega}^k(\gamma)$.
It can be checked that this complex is elliptic in any degree,
however we will not use this fact in what follows
since the main properties of its cohomology groups
will be derived from a long exact sequence as in Section~\ref{gen-coef-cohom}.

Let us denote by $\check{H}^q_{(k)}(M)$ the cohomology groups associated to the complex~\eqref{complejo_nuevo_k} for $0\leq q\leq 2n+2k-1$.
Notice that $\check{H}^q_{(k)}(M)=H^q(M)$ for any $q\leq 2k-2$ and $\check{H}^q_{(k)}(M)=H^{q-2k+1}_{(k)}(M)$ for any $q\geq n+k+1$.

Now, the sequences of complexes
\xymatrixcolsep{1.5pc}\xymatrixrowsep{1.5pc} $$\xymatrix{
&&\Omega^{n+k-1}\ar[r]^-d&\Omega^{n+k}\ar[r]^-d&\Omega^{n+k+1}\ar[r]^-d& \Omega^{n+k+2}\ar[r]^-d& \Omega^{n+k+3}\ar[r]^-d&\cdots\\
&&\Omega^{n-k-1}\ar[r]^-d\ar[u]^-{L_{\omega}^k}&\Omega^{n-k}\ar[r]^-d\ar[u]^-{L_{\omega}^k}&\Omega^{n-k+1}\ar[r]^-d\ar[u]^-{L_{\omega}^k}
& \Omega^{n-k+2}\ar[r]^-d\ar[u]^-{L_{\omega}^k}& \Omega^{n-k+3}\ar[r]^-d\ar[u]^-{L_{\omega}^k}&\cdots\\
\cdots\ar[r]^-{\check{d}}& \check{\Omega}^{n+k-3}_{(k)}\ar[r]^-{\check{d}}& \check{\Omega}^{n+k-2}_{(k)}\ar[r]^-{\check{d}}&\check{\Omega}^{n+k-1}_{(k)}\ar[r]^-D
&\fra^{n-k+1}_{(k)}\ar[u]^-i\ar[r]^-d& \fra^{n-k+2}_{(k)}\ar[u]^-i\ar[r]^-d& \fra^{n-k+3}_{(k)}\ar[u]^-i\ar[r]^-d&\cdots\\
\cdots\ar[r]^-{d}& \Omega^{n+k-3}\ar[u]^-p\ar[r]^-d& \Omega^{n+k-2}\ar[u]^-p\ar[r]^-d&\Omega^{n+k-1}\ar[u]^-p\ar[r]^-d&\Omega^{n+k}\ar[r]^-d&\Omega^{n+k+1}&&\\
\cdots\ar[r]^-{d}& \Omega^{n-k-3}\ar[u]^-{L_{\omega}^k}\ar[r]^-d& \Omega^{n-k-2}\ar[u]^-{L_{\omega}^k}\ar[r]^-d&\Omega^{n-k-1}\ar[u]^-{L_{\omega}^k}\ar[r]^-d
&\Omega^{n-k}\ar[u]^-{L_{\omega}^k}\ar[r]^-d&\Omega^{n-k+1}\ar[u]^-{L_{\omega}^k}&&\\
}$$
where $i$ denotes the inclusion and $p$ the natural projection,
give rise to the following long exact sequence in cohomology:
\xymatrixcolsep{2.75pc}\xymatrixrowsep{0.8pc}
$$\xymatrix{
\cdots\ar[r]^-{\check{f}_{n-k-2}} & H^{n-k-2}(M)\ar[r]^-{L^k} & H^{n+k-2}(M)\ar[r]^-{H(p)} & \check{H}^{n+k-2}_{(k)}(M) &\\
\ar[r]^-{\check{f}_{n-k-1}} & H^{n-k-1}(M)\ar[r]^-{L^k} & H^{n+k-1}(M)\ar[r]^-{H(p)} & \check{H}^{n+k-1}_{(k)}(M) &}$$
\xymatrixcolsep{3.3pc}\begin{equation}\label{sexl-E_k}\xymatrix{
\ar[r]^-{\check{f}_{n-k}} & H^{n-k}(M)\ar[r]^-{L^k}  & H^{n+k}(M)\ar[r]^-{\check{f}_{n-k+1}} & \check{H}^{n+k}_{(k)}(M) &}
\end{equation}
\xymatrixcolsep{2.75pc}$$\xymatrix{
\ar[r]^-{H(i)} & H^{n-k+1}(M)\ar[r]^-{L^k} & H^{n+k+1}(M)\ar[r]^-{\check{f}_{n-k+2}} & \check{H}^{n+k+1}_{(k)}(M)&\\
\ar[r]^-{H(i)} & H^{n-k+2}(M)\ar[r]^-{L^k} & H^{n+k+2}(M)\ar[r] ^-{\check{f}_{n-k+3}} & \check{H}^{n+k+2}_{(k)}(M)&\!\!\!\!\!\!\!\!\!\!\!\!\!\!\!\!\!\!\!\!\!\!\!\!\!\!\!\!\!\!\cdots.
}
$$
Here $H(i)$ and $H(p)$ are the homomorphisms induced in cohomology by $i$ and $p$, respectively, and
$\check{f}_q$ are the connecting homomorphisms, which are given as follows:

$\bullet$ for any $j\leq n+k-1$ and $[\alpha]\in\check H^j_{(k)}(M)$: $\check f_{j-2k+1}([\alpha]) = [\beta]$, where $d\alpha = L^k_{\omega}(\beta)$;

$\bullet$ for any $j\geq n+k$ and $[\alpha]\in H^j(M)$: $\check f_{j-2k+1}([\alpha]) = [d\beta]$,
where $\alpha = L^k_{\omega}(\beta)$.

\medskip

Let $\check{c}_q^{(k)}(M)$ be the dimension of $\check{H}^q_{(k)}(M)$ when it is finite.
As in Section~\ref{gen-coef-cohom},
using five-term exact sequences from \eqref{sexl-E_k},
we arrive at the following result, that provides an extension of Proposition~\ref{propiedades-finitas}.

\begin{proposition}\label{propiedades-finitas-suc-check}
Let $\M$ be a symplectic manifold of finite type and let $1\leq k\leq n$.
Then, for every $0\leq q\leq 2n+2k-1$, the following properties hold:
\begin{enumerate}
\item[{\rm (i)}] \emph{Finiteness and bounds for the numbers $\check{c}_q^{(k)}(M)$:} the group
$\check{H}^q_{(k)}(M)$ is finite dimensional and its dimension $\check{c}_q^{(k)}(M)$ satisfies
the inequalities
\begin{equation}\label{des-check}
b_{q-2k+1}(M)-b_{q+1}(M)\leq \check{c}_q^{(k)}(M)\leq b_{q-2k+1}(M)+b_{q}(M).
\end{equation}
\item[{\rm (ii)}] \emph{Symplectic manifolds satisfying the HLC:} if $\M$ satisfies the HLC then
the lower bound in \eqref{des-check} is attained for every $q\geq n+k$, i.e.
$$
\check{c}_q^{(k)}(M)=b_{q-2k+1}(M)-b_{q+1}(M), \quad\ q\geq n+k.
$$
\item[{\rm (iii)}] \emph{Exact symplectic manifolds:} if $\omega$ is exact then
the upper bound in \eqref{des-check} is attained, i.e.
$$
\check{c}_q^{(k)}(M)=b_{q-2k+1}(M)+b_{q}(M).
$$
\item[{\rm (iv)}] \emph{Poincar\'e lemma:} if $U$ is the open unit disk in $\mathbb{R}^{2n}$ with the standard
symplectic form $\omega=\sum_{i=1}^n dx^i\wedge dx^{n+i}$, then
$\check{c}_0^{(k)}(U)=1=\check{c}_{2k-1}^{(k)}(U)$ and $\check{c}_{q}^{(k)}(U)=0$ for any other value of $q$.
\end{enumerate}
\end{proposition}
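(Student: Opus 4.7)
The plan is to mirror the argument of Proposition~\ref{propiedades-finitas}, now using the long exact sequence \eqref{sexl-E_k} in place of \eqref{selarga}. The first step is to extract, for every $0\leq q\leq 2n+2k-1$, the five-term exact piece of \eqref{sexl-E_k} centered at $\check{H}^q_{(k)}(M)$,
$$
H^{q-2k}(M)\stackrel{L^k}{\longrightarrow} H^q(M)\longrightarrow \check{H}^q_{(k)}(M)\longrightarrow H^{q-2k+1}(M)\stackrel{L^k}{\longrightarrow} H^{q+1}(M),
$$
which is equivalent to the short exact sequence
$$
0\longrightarrow \coker\!\bigl(L^k\colon H^{q-2k}(M)\to H^q(M)\bigr)\longrightarrow \check{H}^q_{(k)}(M)\longrightarrow \ker\!\bigl(L^k\colon H^{q-2k+1}(M)\to H^{q+1}(M)\bigr)\longrightarrow 0.
$$
A subtlety to verify at this step is that the outer maps in the five-term sequence are $(H(p),\check{f}_{q-2k+1})$ when $q\leq n+k-1$ but $(\check{f}_{q-2k+1},H(i))$ when $q\geq n+k$; in both cases the resulting short exact sequence takes the same form. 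This bookkeeping at the transition degree $q=n+k$ is the only mildly delicate point of the argument.

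Granting this, part (i) is immediate: finiteness of $\check{c}^{(k)}_q(M)$ follows from the finite type hypothesis, and taking dimensions in the short exact sequence yields
$$
\check{c}^{(k)}_q(M)=\bigl(b_q(M)-\dim\text{im}\,L^k\bigr)+\bigl(b_{q-2k+1}(M)-\dim\text{im}\,L^k\bigr),
$$
from which the upper bound in \eqref{des-check} is obtained using $\dim\text{im}\,L^k\geq 0$, while the lower bound follows from the obvious estimates $\dim\text{im}\,L^k\leq b_q(M)$ and $\dim\text{im}\,L^k\leq b_{q+1}(M)$ on the two images.

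For (ii) I would invoke HLC, which implies that $L^k\colon H^a(M)\to H^{a+2k}(M)$ is surjective whenever $a+k\geq n$. Taking $a=q-2k$ and $a=q-2k+1$ with $q\geq n+k$ makes both $L^k$'s in the five-term sequence surjective, so the cokernel vanishes and $\dim\ker=b_{q-2k+1}(M)-b_{q+1}(M)$, yielding the claimed equality. For (iii), exactness of $\omega$ forces the two maps $L^k$ on de Rham cohomology to be zero, so the cokernel and kernel above become all of $H^q(M)$ and $H^{q-2k+1}(M)$, and the dimensions sum to the upper bound. Finally (iv) is a direct consequence of (iii) applied to the unit disk: since $U$ is contractible, $b_0(U)=1$ and $b_j(U)=0$ for $j\geq 1$, so $\check{c}^{(k)}_q(U)$ is nonzero precisely when $q=0$ or $q-2k+1=0$, i.e. for $q=0$ and $q=2k-1$. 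Once the uniform short exact sequence above is in hand, the rest is a direct analog of the proof of Proposition~\ref{propiedades-finitas}.
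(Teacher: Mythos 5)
Your proposal is correct and follows exactly the route the paper intends: the paper's proof is literally "as in Section~\ref{gen-coef-cohom}, using five-term exact sequences from \eqref{sexl-E_k}", and you have filled in those details accurately, including the bookkeeping at the transition degree $q=n+k$ and the deduction of (iv) from (iii) via contractibility of $U$.
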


\begin{remark}\label{case ii for Lk injective}
{\rm
By (ii) the lower bound in \eqref{des-check} is attained  for every $q\geq n+k$ for symplectic manifolds satisfying the HLC.
Similarly, it can be proved from \eqref{sexl-E_k} that if $\M$ satisfies that all the maps
$L^k\colon H^{n-k}(M)\longrightarrow H^{n+k}(M)$ are \emph{injective}
then
$\check{c}_q^{(k)}(M)=b_{q}(M)-b_{q-2k}(M)$ for every $q\leq n+k-1$.
In conclusion, if $L^k\colon H^{n-k}(M)\longrightarrow H^{n+k}(M)$ is an isomorphism
for any $1\leq k\leq n$ (for instance, if $\M$ is a \emph{closed} symplectic manifold satisfying the HLC)
then the following equalities hold:
\begin{eqnarray*}
&& \check{c}_q^{(k)}(M)=b_{q}(M)-b_{q-2k}(M),\quad 0\leq q\leq n+k-1;\\[4pt]
&& \check{c}_q^{(k)}(M)=b_{q-2k+1}(M)-b_{q+1}(M),\quad n+k\leq q\leq 2n+2k-1.
\end{eqnarray*}
}
\end{remark}

\begin{example}\label{R2n}
{\rm
By Proposition~\ref{propiedades-finitas-suc-check}~(iv) we have $\check{c}_{2k-1}^{(k)}(U)=1$.
Next we show the non-zero cohomology class generating $\check{H}^{2k-1}_{(k)}(U)$.
Let $\alpha=\sum_{i=1}^n x^i\wedge dx^{n+i}$. The $(2k-1)$-form $\beta=\alpha\wedge\omega^{k-1}$
is $\check{d}$-closed because $d\alpha=\omega$ and hence $d\beta=\omega^{k} \in L_{\omega}^k(\Omega^{0}(U))$.
Clearly, $\beta$ is not $\check{d}$-exact, because it is not $d$-exact. In conclusion, $[\beta]$ defines a non-zero cohomology class and
$\check{H}^{2k-1}_{(k)}(U)=\langle [\beta] \rangle$.
}
\end{example}

\begin{remark}\label{comparison of indexes}
{\rm
Notice that the generalized coeffective space $\hat H^{n-k+1}(M)$ is isomorphic to a quotient of $\check{H}^{n+k}_{(k)}(M)$;
concretely,
\begin{equation}\label{rel-hat-check}
\hat H^{n-k+1}(M)\cong \frac{\check{H}^{n+k}_{(k)}(M)}{H^{n+k}(M)/L^k(H^{n-k}(M))}.
\end{equation}
}
\end{remark}

\medskip

Let $\M$ be a symplectic manifold of finite type. For every $1\leq k\leq n$, we define
$$\check{\chi}^{(k)}(M) = \sum^{2n+2k-1}_{i=0}(-1)^i \, \check{c}_i^{(k)}(M).$$
Let us write $\check{\chi}^{(k)}(M) = \check{\chi}^{(k)}_+(M) + \check{\chi}^{(k)}_-(M)$,
where
$$\check{\chi}^{(k)}_+(M)=\sum^{n+k-1}_{i=0}(-1)^i \, \check{c}_i^{(k)}(M),\quad
\mbox{ and }\quad
\check{\chi}^{(k)}_-(M)=\sum^{2n+2k-1}_{i=n+k}(-1)^i\, \check{c}_i^{(k)}(M).$$

\begin{proposition}\label{chis-filtered}
Let $\M$ be of finite type. For every $1\leq k\leq n$:
\begin{enumerate}
\item[{\rm (i)}] $\check{\chi}^{(k)}(M) = 0$; consequently, $\check{\chi}^{(k)}_-(M)=-\check{\chi}^{(k)}_+(M)$.
\item[{\rm (ii)}] $\check{\chi}^{(k)}_+(M)=(-1)^{n+k+1}(\check{c}^{(k)}_{n+k}(M)-\hat{c}_{n-k+1}(M)) + \chi^{(k)}(M)$.
\end{enumerate}
\end{proposition}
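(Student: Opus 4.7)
The plan is to derive both statements from the long exact sequence \eqref{sexl-E_k}. Under the finite-type hypothesis on $M$, Proposition~\ref{propiedades-finitas-suc-check} guarantees that every term of \eqref{sexl-E_k} is finite dimensional, and the sequence has only finitely many non-zero terms, so its alternating sum of dimensions vanishes.

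For part (i), I would observe that the terms of \eqref{sexl-E_k} arrange into consecutive triples
$$\bigl(H^{n-k+m}(M),\ H^{n+k+m}(M),\ \check{H}^{n+k+m}_{(k)}(M)\bigr), \qquad m\in\mathbb{Z},$$
each occupying three consecutive positions of the sequence. The vanishing of the Euler characteristic of the sequence then reads, up to an overall sign,
$$\sum_{m\in\mathbb{Z}}(-1)^m\bigl[b_{n-k+m}(M) - b_{n+k+m}(M) + \check{c}^{(k)}_{n+k+m}(M)\bigr] = 0.$$
Reindexing the first two sums (via $r=n-k+m$ and $r=n+k+m$ respectively), each collapses to $(-1)^{n+k}\chi(M)$ and the two cancel one another, while the remaining sum equals $(-1)^{n+k}\check{\chi}^{(k)}(M)$. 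Hence $\check{\chi}^{(k)}(M)=0$, which immediately yields $\check{\chi}^{(k)}_-(M)=-\check{\chi}^{(k)}_+(M)$.

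For part (ii), I would use (i) to reduce the claim to a direct computation of $\check{\chi}^{(k)}_-(M)$. Splitting off the $q=n+k$ term and invoking the identification $\check{c}^{(k)}_q(M)=c^{(k)}_{q-2k+1}(M)$ for $q\geq n+k+1$ (immediate from the construction of the complex \eqref{complejo_nuevo_k}), the substitution $r=q-2k+1$ gives
$$\check{\chi}^{(k)}_-(M) = (-1)^{n+k}\check{c}^{(k)}_{n+k}(M) - \sum_{r=n-k+2}^{2n}(-1)^r c^{(k)}_r(M).$$
Substituting the definition of $\chi^{(k)}(M)$ to rewrite the last sum, and noting that $(-1)^{n-k+1}=(-1)^{n+k+1}$, one collects signs into
$$\check{\chi}^{(k)}_-(M) = -(-1)^{n+k+1}\bigl[\check{c}^{(k)}_{n+k}(M) - \hat{c}_{n-k+1}(M)\bigr] - \chi^{(k)}(M),$$
and the formula in (ii) follows by applying $\check{\chi}^{(k)}_+(M)=-\check{\chi}^{(k)}_-(M)$ from part (i).

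The main subtlety I expect is the sign bookkeeping in (i). Although the connecting maps in \eqref{sexl-E_k} are labelled differently ($H(i)$, $H(p)$, and the various $\check{f}_\bullet$) across the middle degrees, it is a single long exact sequence, and one must verify that the triple-grouping aligns uniformly so that the two Betti-number contributions appear with opposite signs after reindexing and cancel. Once this alignment is secured, part (ii) is pure formal manipulation with the definition of $\chi^{(k)}$.
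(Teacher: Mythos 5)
Your proof is correct and follows essentially the same route as the paper: part (i) is the vanishing of the Euler characteristic of the long exact sequence \eqref{sexl-E_k} (the paper invokes this by saying ``argue as in Proposition~\ref{invtopologicos}''), and part (ii) is the same sign bookkeeping using $\check{c}^{(k)}_q(M)=c^{(k)}_{q-2k+1}(M)$ for $q\geq n+k+1$ together with the definition of $\chi^{(k)}(M)$. Your write-up merely makes explicit the triple-grouping and reindexing that the paper leaves implicit.
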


\begin{proof}
Property (i) follows from~\eqref{sexl-E_k} arguing similarly to the proof of Proposition~\ref{invtopologicos}.

For the proof of (ii), taking into account that $(-1)^{n+k+s}\check{c}^{(k)}_{n+k+s}(M)=-(-1)^{n-k+s+1}c^{(k)}_{n-k+s+1}(M)$
for $s\geq 1$,
by Proposition~\ref{invtopologicos} we get
$\chi^{(k)}(M)+\check{\chi}^{(k)}_-(M)= (-1)^{n+k}(\check{c}^{(k)}_{n+k}(M)-\hat{c}_{n-k+1}(M))$.
Since $\check{\chi}^{(k)}_+(M)=-\check{\chi}^{(k)}_-(M)$, relation (ii) follows.
\end{proof}

Equality (ii) in the above proposition means that the behaviour
of the symplectic invariant $\check{\chi}^{(k)}_+(M)$ only depends on
$\check{c}^{(k)}_{n+k}(M)-\hat{c}_{n-k+1}(M)$, because $\chi^{(k)}(M)$ is a topological invariant
by Proposition~\ref{invtopologicos}.
Moreover, one has the following characterization of the HLC in terms of $\check{\chi}^{(k)}_+(M)$,
which in particular implies that the HLC is determined by the cohomology of the first half
of the complexes \eqref{complejo_nuevo_k}.

\begin{corollary}\label{HLC-caracterizacion}
A symplectic manifold $\M$ of finite type satisfies the HLC if and only if
$\check{\chi}^{(k)}_+(M)=\chi^{(k)}(M)$ for every $1\leq k\leq n$.
\end{corollary}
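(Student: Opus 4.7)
The plan is to reduce the equality $\check{\chi}^{(k)}_+(M)=\chi^{(k)}(M)$ to a statement about surjectivity of a single Lefschetz map, and then observe that requiring it for all $k$ gives exactly the HLC. The key observation is that Proposition~\ref{chis-filtered}(ii) already isolates the obstruction: from
$$\check{\chi}^{(k)}_+(M) - \chi^{(k)}(M) = (-1)^{n+k+1}\bigl(\check{c}^{(k)}_{n+k}(M)-\hat{c}_{n-k+1}(M)\bigr),$$
the condition $\check{\chi}^{(k)}_+(M)=\chi^{(k)}(M)$ is equivalent to $\check{c}^{(k)}_{n+k}(M)=\hat{c}_{n-k+1}(M)$. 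Hence the corollary is a purely numerical consequence of Proposition~\ref{chis-filtered}(ii), Remark~\ref{comparison of indexes}, and the definition of the HLC.

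First I would use the isomorphism \eqref{rel-hat-check} from Remark~\ref{comparison of indexes}, which reads
$$\hat H^{n-k+1}(M)\cong \frac{\check{H}^{n+k}_{(k)}(M)}{H^{n+k}(M)/L^k(H^{n-k}(M))}.$$
Taking dimensions (all finite since $M$ is of finite type), this yields
$$\hat{c}_{n-k+1}(M) = \check{c}^{(k)}_{n+k}(M) - \dim\,\operatorname{coker}\bigl(L^k\colon H^{n-k}(M)\to H^{n+k}(M)\bigr).$$
Therefore $\check{c}^{(k)}_{n+k}(M)=\hat{c}_{n-k+1}(M)$ if and only if $L^k\colon H^{n-k}(M)\to H^{n+k}(M)$ is surjective.

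Combining both steps, for each fixed $k$ the equality $\check{\chi}^{(k)}_+(M)=\chi^{(k)}(M)$ is equivalent to surjectivity of $L^k\colon H^{n-k}(M)\to H^{n+k}(M)$. Asking this for every $1\leq k\leq n$ is precisely the Hard Lefschetz Condition, which proves both directions of the corollary.

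There is no genuine obstacle here; the statement is essentially a bookkeeping consequence of the long exact sequence~\eqref{sexl-E_k} once one has Proposition~\ref{chis-filtered}(ii) and the identification~\eqref{rel-hat-check}. The only point requiring mild care is the sign $(-1)^{n+k+1}$ in Proposition~\ref{chis-filtered}(ii): since this factor is nonzero it does not affect the equivalence with $\check{c}^{(k)}_{n+k}(M)=\hat{c}_{n-k+1}(M)$, but one should confirm that the cokernel of $L^k$ enters with the correct (nonnegative) sign in the dimension count coming from \eqref{rel-hat-check}, so that vanishing of the obstruction really coincides with surjectivity rather than with some other equality of dimensions.
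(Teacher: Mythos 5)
Your argument is correct and is essentially the paper's own proof: both reduce the statement via Proposition~\ref{chis-filtered}(ii) to the equality $\check{c}^{(k)}_{n+k}(M)=\hat{c}_{n-k+1}(M)$, and then use the identification \eqref{rel-hat-check} to see that this holds precisely when $L^k\colon H^{n-k}(M)\to H^{n+k}(M)$ is surjective, i.e.\ exactly the HLC when required for all $k$. Your explicit dimension count with the cokernel is just a slightly more spelled-out version of the paper's observation that HLC holds iff $\hat H^{n-k+1}(M)\cong\check H^{n+k}_{(k)}(M)$.
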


\begin{proof}
By \eqref{rel-hat-check}, a symplectic manifold satisfies the HLC if and only if
$\hat H^{n-k+1}(M) \cong \check{H}^{n+k}_{(k)}(M)$ for every $1\leq k\leq n$.
Therefore, if $M$ is of finite type then, $\M$ satisfies the HLC if and only
if $\hat{c}_{n-k+1}(M)=\check{c}^{(k)}_{n+k}(M)$ for every $1\leq k\leq n$.
By Proposition~\ref{chis-filtered}~(ii), this is equivalent to
$\check{\chi}^{(k)}_+(M)=\chi^{(k)}(M)$ for every $1\leq k\leq n$.
\end{proof}

\begin{remark}\label{comparison with filtered}
{\rm
In \cite[Theorem 3.1]{TY3} Tsai, Tseng and Yau have introduced elliptic differential complexes of filtered forms
that extend the complex of primitive forms \cite[Proposition 2.8]{TY2}.
Complexes \eqref{complejo_nuevo_k} can be thought as a coeffective version of such filtered complexes.
Moreover,
in \cite[Theorem 4.2]{TY3} they obtain long exact sequences that provide
a resolution of the Lefschetz maps $L^k$. Comparing with \eqref{sexl-E_k}, which also gives
a resolution of the same Lefschetz maps, one immediately concludes that the cohomology $\check{H}^{*}_{(k)}(M)$
is isomorphic to the $(k-1)$-filtered cohomology as follows:

\medskip

$\bullet$ $\check{H}^{n+k-s}_{(k)}(M) \cong F^{k-1}H^{n+k-s}_+(M)$,\, for $s=1,\ldots,n+k$,

\medskip

$\bullet$ $\check{H}^{n+k+s}_{(k)}(M) \cong F^{k-1}H^{n+k-s-1}_-(M)$,\, for $s=0,1,\ldots,n+k-1$.

\medskip

\noindent In particular, for any $k\geq 1$ one has the following isomorphism between
the $(k-1)$-filtered cohomology group
and the $k$-coeffective cohomology group
\begin{equation}\label{rel-otra}
F^{k-1}H^{n+k-s-1}_-(M)\cong H^{n-k+s+1}_{(k)}(M) \cong \check{H}^{n+k+s}_{(k)}(M), \quad 1\leq s\leq n+k-1.
\end{equation}

\noindent For $k=1$ we recover the isomorphisms proved in \cite{E} between the extended coeffective cohomology of Eastwood and the primitive cohomology
$PH=F^0H$ of Tseng-Yau \cite{TY1,TY2}. More generally, the isomorphism for any
primitive cohomology group is as follows:
$$
\begin{array}{l}
P H^q_{\partial_+}(M) \cong F^{0}H^{q}_+(M) \cong \check{H}^{q}_{(1)}(M), \quad\  0\leq q\leq n-1;\\[7pt]
P H^q_{\partial_-}(M) \cong F^{0}H^{q}_-(M) \cong \check{H}^{2n-q+1}_{(1)}(M) \cong H^{2n-q}_{(1)}(M), \quad\ 0\leq q\leq n-1;\\[7pt]
P H^{n-k+1}_{dd^\Lambda}(M) \cong F^{k-1}H^{n+k-1}_+(M) \cong \check{H}^{n+k-1}_{(k)}(M), \quad\ 1\leq k\leq n;\\[7pt]
P H^{n-k+1}_{d+d^\Lambda}(M) \cong F^{k-1}H^{n+k-1}_-(M) \cong \check{H}^{n+k}_{(k)}(M), \quad\ 1\leq k\leq n.
\end{array}
$$

From now on, due to the above identifications,
we will refer to the cohomology groups $\check{H}^{q}_{(k)}(M)$ as the filtered cohomology groups of $\M$.
}
\end{remark}

\medskip

Notice that an analogous observation as Remark~\ref{coef-inv-cohom-class} is also valid for the filtered cohomologies.
Hence, a similar result to Lemma~\ref{iso-equiv-coef} holds:

\begin{lemma}\label{iso-equiv-filtered}
Let $F\colon (M,\omega)\longrightarrow (M',\omega')$ be a
diffeomorphism such that $F^*[\omega']=\lambda [\omega]$ for some non-zero $\lambda\in \mathbb{R}$.
Then, for any $1\leq k\leq n$,
$\check{H}^q_{(k)}(M') \cong \check{H}^q_{(k)}(M)$ for every $0\leq q\leq 2n+2k-1$.
\end{lemma}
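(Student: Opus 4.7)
The plan is to mimic the argument behind Lemma~\ref{iso-equiv-coef}: first transport the filtered complex from $(M',\omega')$ to $(M,F^*\omega')$ via $F^*$, then show that on a fixed manifold the complex \eqref{complejo_nuevo_k}, and hence its cohomology, depends only on the projective class $[[\omega^k]]\in\mathbb{P}(H^{2k}(M))$, as announced in the remark preceding the statement.

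First I would observe that $F^*\colon\Omega^*(M')\to\Omega^*(M)$ is a $d$-equivariant isomorphism satisfying $F^*\circ L_{\omega'}^{k}=L_{F^*\omega'}^{k}\circ F^*$. Consequently $F^*$ maps $L_{\omega'}^{k}(\Omega^{q-2k}(M'))$ isomorphically onto $L_{F^*\omega'}^{k}(\Omega^{q-2k}(M))$ and $\fra^{q}_{(k)}(M',\omega')$ onto $\fra^{q}_{(k)}(M,F^*\omega')$; since the second-order operator $D$ is defined purely through $d$ and inversion of $L_{\omega}^{k}$, it is intertwined as well. Thus $F^*$ yields a chain isomorphism between the two complexes \eqref{complejo_nuevo_k}, and so $\check{H}^q_{(k)}(M',\omega')\cong\check{H}^q_{(k)}(M,F^*\omega')$. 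Next I would use that $L^k_{\mu\omega}=\mu^k L^k_\omega$, so for any $\mu\ne 0$ the subspaces $L^k_{\mu\omega}(\Omega^{q-2k}(M))$ and $\fra^q_{(k)}(M,\mu\omega)$ coincide with those for $\omega$; the complex \eqref{complejo_nuevo_k} is therefore literally invariant under nonzero rescaling of $\omega$. Replacing $F^*\omega'$ by $\tfrac{1}{\lambda}F^*\omega'$ reduces the problem to two symplectic forms $\omega_1,\omega_2$ on $M$ with $[\omega_1]=[\omega_2]$, hence $[\omega_1^k]=[\omega_2^k]$.

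The heart of the argument is now to prove that $[\omega_1^k]=[\omega_2^k]$ implies $\check{H}^q_{(k)}(M,\omega_1)\cong\check{H}^q_{(k)}(M,\omega_2)$ for every $q$. Writing $\omega_1^k-\omega_2^k=d\mu$, for any closed form $\alpha$ one has $\alpha\wedge(\omega_1^k-\omega_2^k)=\pm d(\alpha\wedge\mu)$, so the induced Lefschetz maps $L^k\colon H^*(M)\to H^{*+2k}(M)$ are literally the same for the two forms. Extracting five-term pieces from \eqref{sexl-E_k} (and checking that the pattern is uniform across the index $q=n+k$ where the description of the connecting maps in \eqref{sexl-E_k} changes form), one obtains at each $q$ the short exact sequence
\begin{equation*}
0\to \operatorname{coker}\!\bigl(L^k\colon H^{q-2k}(M)\to H^q(M)\bigr)\to \check{H}^q_{(k)}(M,\omega_i)\to \ker\!\bigl(L^k\colon H^{q-2k+1}(M)\to H^{q+1}(M)\bigr)\to 0,
\end{equation*}
whose outer terms are independent of $i\in\{1,2\}$. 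Since any short exact sequence of vector spaces splits, the middle terms are isomorphic, which gives the lemma.

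The main obstacle I anticipate is not conceptual but bookkeeping: one must verify that the reduction via scaling preserves not only the quotient spaces $\check{\Omega}^q_{(k)}$ and the subspaces $\fra^q_{(k)}$ but also the nonstandard second-order operator $D$, and that the five-term sequence extracted from \eqref{sexl-E_k} really has the stated uniform shape for the full range $0\le q\le 2n+2k-1$, including the boundary degrees. Both checks are routine once the effect of rescaling $\omega$ on every ingredient of \eqref{complejo_nuevo_k} is written out explicitly.
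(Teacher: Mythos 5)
Your proposal is correct and takes essentially the same route as the paper, which justifies the lemma exactly by the observation that the long exact sequence \eqref{sexl-E_k} exhibits $\check{H}^q_{(k)}$ as an extension of $\ker L^k$ by $\mathrm{coker}\, L^k$ in de Rham cohomology, so the groups depend only on the projective class of $[\omega^k]$. (The only nitpick is that under rescaling $\omega\mapsto\mu\omega$ the operator $D$ is multiplied by $\mu^{-k}$ rather than being literally unchanged, but as you note this does not affect kernels or images, hence not the cohomology.)
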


A similar result to Proposition~\ref{calculo-coef-solvariedades} for computation
of the filtered cohomologies of certain solvmanifolds is also available:

\begin{proposition}\label{calculo-filtered-solvariedades}
Let $(M=G/\Gamma,\omega)$ be a $2n$-dimensional symplectic solvmanifold satisfying the Mostow condition.
Let $\frg$ be the Lie algebra of $G$ and let $\omega'\in \bigwedge^2 \frg^*$ be a left-invariant symplectic form
representing the de Rham class $[\lambda \omega]\in H^2(M)$ for some $\lambda\not=0$.
Then, for any $1\leq k\leq n$, the inclusion $\bigwedge^* \frg^* \hookrightarrow \Omega^* (M)$ induces
isomorphisms
$\check{H}^q_{(k)}(M,\omega) \cong \check{H}^q_{(k)}(\frg,\omega')$ for every $0\leq q\leq 2n+2k-1$.
\end{proposition}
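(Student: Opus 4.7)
The strategy is a Five-Lemma comparison between the long exact sequence \eqref{sexl-E_k} on $M$ and its Lie-algebra analogue on $\frg$. First, since $[\omega'] = \lambda[\omega]$ with $\lambda \neq 0$, Lemma~\ref{iso-equiv-filtered} applied with $F = \mathrm{id}_M$ yields $\check{H}^q_{(k)}(M,\omega) \cong \check{H}^q_{(k)}(M,\omega')$ for every $0 \leq q \leq 2n+2k-1$. Thus I may replace $\omega$ by the left-invariant form $\omega'$ throughout, and it suffices to prove that the inclusion $j : \bigwedge^*\frg^* \hookrightarrow \Omega^*(M)$ induces isomorphisms on the filtered cohomology when both sides are defined with respect to $\omega'$.

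Next, observe that the extended complex \eqref{complejo_nuevo_k} is built purely from the exterior differential $d$ and the Lefschetz operator $L^k_{\omega'} = \,\cdot\wedge \omega'^k$. Since $\omega'$ is left-invariant, both operators preserve the subspace of left-invariant forms, so the same construction produces a subcomplex on $\bigwedge^*\frg^*$ whose cohomology groups I denote $\check{H}^q_{(k)}(\frg)$. The inclusion $j$ is a morphism of extended complexes, and moreover it is compatible with the short exact sequences
\[
0 \to \fra^*_{(k)} \to \Omega^* \to L^k_{\omega'}(\Omega^{*-2k}) \to 0, \qquad 0 \to L^k_{\omega'}(\Omega^{*-2k}) \to \Omega^* \to \check{\Omega}^*_{(k)} \to 0,
\]
from which \eqref{sexl-E_k} is derived. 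Consequently, $j$ induces a commutative ladder between the long exact sequence \eqref{sexl-E_k} on $M$ and its Lie-algebra counterpart on $\frg$.

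Under the Mostow condition, $j$ induces an isomorphism $H^q(\frg) \cong H^q(M)$ in every degree, and these isomorphisms intertwine the Lefschetz maps $L^k$ on both sides because $L^k$ is cup product with $[\omega'^k]$ and $\omega'^k$ is left-invariant. The Five Lemma applied degree by degree to the ladder then yields $\check{H}^q_{(k)}(\frg) \cong \check{H}^q_{(k)}(M,\omega')$ for all $0 \leq q \leq 2n+2k-1$, completing the proof.

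The only delicate point is the commutativity of the ladder near the transition degrees $q = n+k-1, n+k$, where the connecting homomorphism $\check{f}_q$ changes its definition (choosing preimages under $L^k_{\omega'}$ on one side of the turning point, representatives modulo $L^k_{\omega'}(\Omega^*)$ on the other). However, both prescriptions can be carried out consistently within the subcomplex of left-invariant forms — left-invariant preimages under $L^k_{\omega'}$ exist because $L^k_{\omega'}$ restricts to a map of left-invariant forms with the same kernel and image structure — so $j$ commutes with $\check{f}_q$ and the ladder squares commute, as needed.
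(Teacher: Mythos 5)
Your argument is correct and is essentially the proof the paper has in mind: the paper leaves this result as ``straightforward from the long exact sequence in cohomology,'' meaning exactly the comparison you carry out — reduce to the invariant form $\omega'$ via Lemma~\ref{iso-equiv-filtered}, set up the commutative ladder between \eqref{sexl-E_k} on $M$ and its Chevalley--Eilenberg analogue, use the Mostow/Nomizu--Hattori isomorphisms $H^q(\frg)\cong H^q(M)$ intertwining the Lefschetz maps, and apply the Five Lemma. Your extra care about the connecting homomorphisms at the turning point (existence of left-invariant preimages under $L^k_{\omega'}$, guaranteed by linear symplectic algebra on $\frg$) is a correct and worthwhile verification that the paper omits.
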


\begin{remark}\label{BCyA}
{\rm
In \cite{TY1} Tseng and Yau introduced and studied more generally Bott-Chern and Aeppli type cohomologies using
$d$ and $d^\Lambda$ for a symplectic manifold. A characterization of the HLC in the compact case from an
\emph{\`a la Fr\"olicher} inequality is given in \cite{Angella-T}.
Note that for the Bott-Chern and Aeppli type symplectic cohomologies,
a similar result to Proposition~\ref{calculo-filtered-solvariedades} is obtained in
\cite[Theorem 3]{Macri} (see also \cite[Theorem 2.31]{Angella-K})
by using another argument.
}
\end{remark}

\section{Relations with the symplectically harmonic cohomology}\label{relacion-con-armonica}

\noindent In this section we relate the symplectically harmonic cohomology with the cohomologies studied in the previous sections.

Let $(M^{2n},\omega)$ be a symplectic manifold of dimension $2n$.
The \emph{symplectic star} operator
$
*\colon \Omega^q(M) \longrightarrow \Omega^{2n-q}(M)
$
is defined by
$$
\alpha \wedge (*\beta )=\Lambda^q(\Pi)(\alpha, \beta)\frac{\omega^n}{n!},
$$
for every $q$-forms $\alpha$ and $\beta$, where $\Pi$ is the bivector field dual to $\omega$, i.e.
the natural Poisson structure associated to $\omega$.

Let $\delta\colon \Omega^q(M)\longrightarrow \Omega^{q-1}(M)$ be the operator given by
$\delta\alpha=(-1)^{q+1}*d*\alpha$, for every $q$-form $\alpha$.
Brylinski proved that $\delta=[i(\Pi),d]$, where $i(\cdot)$ denotes the interior product.

\begin{definition}\cite{Br}
{\rm A form $\alpha$
is called \emph{symplectically harmonic} if $d\alpha =0=\delta \alpha$.
}
\end{definition}

We denote by $\Omega^q_\hr(M)$ the linear space of symplectically
harmonic $q$-forms.
Unlike the Hodge theory, there are non-zero exact
symplectically harmonic forms. Now, following Brylinski
\cite{Br}, one defines the \emph{symplectically harmonic cohomology}
$$
H^q_\hr(M)=\frac{\Omega^q_\hr(M)}{\Omega^q_\hr(M)\cap \text{im}\, d},
$$
for $0\leq q\leq 2n$.
Hence, $H^q_\hr(M)$ is the subspace of the $q$-th de Rham cohomology group consisting of all the de Rham cohomology
classes of degree $q$ containing a symplectically harmonic representative.
By analogy with the Hodge theory, Brylinski~\cite{Br} conjectured that any de Rham cohomology class
admits a symplectically harmonic representative.
Mathieu~\cite{Ma} (and independently Yan~\cite{Yan}) proved that Brylinski conjecture holds,
namely $H^{q}_\hr(M,\omega) = H^q(M)$ for every $0\leq q\leq 2n$, if and only if $\M$ satisfies the~HLC.

An important result is that for any symplectic manifold every de Rham cohomology class up to degree~2 admits
a symplectically harmonic representative~\cite{Yan} (see also \cite{IRTU1} for more general results),
that is, $H^q_\hr(M)=H^q(M)$ for $q=0,1,2$.
For every $q \leq n$, if we set
$$
P^q(M)=\{[\alpha]\in H^q(M)\,|\,L^{n-q+1}[\alpha]=0\},
$$
then $P^{q}(M) \subset H^{q}_\hr(M)$ \cite{Yan}.
Moreover, the following result (proved in \cite[Corollary 2.4]{IRTU1} and \cite[Lemma 4.3]{Ym})
gives a description of the spaces $H^q_\hr(M)$:

\begin{theorem}\label{harm}
Let $\M$ be a symplectic manifold of dimension $2n$. Then,
\begin{eqnarray}
&&H^{q}_\hr(M) = P^{q}(M,\omega)+L(H^{q-2}_\hr(M)),
\quad \mbox{ for } 0\leq q \leq n;\nonumber\\[5pt]
&&H^{q}_\hr(M) = {\rm Im}\,\{L^{q-n}\colon H^{2n-q}_\hr(M) \longrightarrow H^{q}(M)\},
\quad \mbox{ for } n+1\leq q \leq 2n.\nonumber
\end{eqnarray}
\end{theorem}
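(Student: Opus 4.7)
The plan is to push the form-level symplectic Lefschetz ($\mathfrak{sl}_2$) decomposition through the subspace of symplectically harmonic forms, and then in both cases isolate the lowest surviving primitive component. The crucial preliminary is to show that $L$ and $\Lambda$ preserve $\Omega^*_\hr(M)$. Starting from Brylinski's formula $\delta=[i(\Pi),d]$ (which reads $\delta=[\Lambda,d]$ after identifying $i(\Pi)$ with the adjoint $\Lambda$ of $L$), together with $[L,d]=0$ and the $\mathfrak{sl}_2$ relation $[L,\Lambda]=H$ (the weight operator on forms), the Jacobi identity yields $[\Lambda,\delta]=0$ and $[L,\delta]=\pm d$. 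Consequently, if $d\alpha=0=\delta\alpha$, then both $L\alpha$ and $\Lambda\alpha$ are symplectically harmonic, so the entire $\mathfrak{sl}_2$-triple $(L,\Lambda,H)$ acts on $\Omega^*_\hr(M)$.

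Next I would use the standard fact from $\mathfrak{sl}_2$-representation theory that the projectors $\Pi_k$ onto the isotypical summands of the form-level Lefschetz decomposition $\alpha=\sum_k L^k\alpha_k$ (each $\alpha_k$ a primitive form of degree $q-2k$, i.e.\ $\Lambda\alpha_k=0$) are universal polynomials in $L$ and $\Lambda$. Combined with the preceding step, each $\Pi_k$ preserves $\Omega^*_\hr(M)$, and since $\alpha_k$ can be recovered from $\Pi_k\alpha=L^k\alpha_k$ as a nonzero scalar multiple of $\Lambda^k\Pi_k\alpha$, every primitive component $\alpha_k$ of a symplectically harmonic $\alpha$ is itself symplectically harmonic. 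This is the structural fact the rest of the argument rests on.

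For the first equality, when $0\leq q\leq n$, I separate the $k=0$ summand to write $\alpha=\alpha_0+L(\alpha_1+L\alpha_2+\cdots)=\alpha_0+L\beta$. Form-level primitivity of $\alpha_0$ yields $L^{n-q+1}\alpha_0=0$, whence $[\alpha_0]\in P^q(M,\omega)$; and $\beta$ is symplectically harmonic of degree $q-2$, so $L[\beta]\in L(H^{q-2}_\hr(M))$. The reverse inclusion combines the already cited inclusion $P^q(M,\omega)\subseteq H^q_\hr(M)$ of Yan with $L(H^{q-2}_\hr(M))\subseteq H^q_\hr(M)$, which is immediate from the first step. For the second equality, when $n+1\leq q\leq 2n$, the primitivity relation $L^{n-(q-2k)+1}\alpha_k=0$ forces $L^k\alpha_k=0$ whenever $k\leq q-n-1$, so only indices $k\geq q-n$ contribute and $\alpha=L^{q-n}\beta$ with $\beta:=\alpha_{q-n}+L\alpha_{q-n+1}+\cdots$ symplectically harmonic of degree $2n-q$. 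Hence $[\alpha]=L^{q-n}[\beta]$, and the reverse inclusion is again immediate from the first step.

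The main obstacle is the structural input of the second paragraph: confirming that the primitive projectors are polynomials in $L$ and $\Lambda$ and therefore descend to the generally non-closed subspace $\Omega^*_\hr(M)$. Once that is in place, both equalities reduce to elementary bookkeeping about which Lefschetz summands can survive in the given degree range.
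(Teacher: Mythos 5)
Your argument is correct. Note that the paper itself gives no proof of this statement: it is quoted from \cite{IRTU1} (Corollary 2.4) and \cite{Ym} (Lemma 4.3), and what you have written is essentially a reconstruction of the argument in those references. The two pillars are exactly the standard ones: Yan's observation that $d$, $\delta$, $L$, $\Lambda$ satisfy the symplectic Kähler-type identities, so that $\Omega^*_\hr(M)$ is an $\mathfrak{sl}_2$-submodule of $\Omega^*(M)$, and the fact that the Lefschetz projectors onto the summands $L^k P^{q-2k}$ are universal polynomials in $L$ and $\Lambda$ on each fixed degree, hence preserve $\Omega^*_\hr(M)$ and show that every primitive component of a harmonic form is harmonic. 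The degree bookkeeping in both halves is right: for $q\leq n$ the $k=0$ component lands in $P^q(M,\omega)$ (form-level primitivity gives $L^{n-q+1}\alpha_0=0$, a fortiori in cohomology) and the remaining components factor through $L$ applied to a harmonic $(q-2)$-form; for $q\geq n+1$ the relation $L^{n-(q-2k)+1}\alpha_k=0$ kills the components with $k\leq q-n-1$, so the class factors as $L^{q-n}$ of a harmonic class of degree $2n-q$. One small point of justification: $[\Lambda,\delta]=0$ does not actually follow from the graded Jacobi identity alone (that computation reduces to $0=0$); the standard derivation conjugates $[L,d]=0$ by the symplectic star, using $\Lambda=\pm\,{*}L{*}$ and $\delta=\pm\,{*}d{*}$, or one simply quotes Yan's Lemma 1.2. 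Since the identity is true and standard, this is a matter of attribution rather than a gap. You also correctly treat the inclusion $P^q(M,\omega)\subseteq H^q_\hr(M)$ as an external input from \cite{Yan}, which is exactly how the paper states it immediately before the theorem.
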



Next we suppose that $\M$ is of finite type and denote by $h_q(M)$ the dimension of $H^q_\hr(M)$.

\begin{example}\label{TM}
{\rm
Let $M^{n}$ be a manifold of dimension $n$ and of finite type, and let $(T^*M,\omega_0)$ be the \emph{cotangent bundle} endowed with the
\emph{standard} symplectic
form. Since $\omega_0$ is exact, the homomorphisms $L^{k}$ are identically zero and by Theorem~\ref{harm} we have
$$h_{q}(T^*M,\omega_0)=b_q(M), \ \mbox{ for } q\leq n,
\quad \mbox{ and }\quad h_{q}(T^*M,\omega_0)=0, \  \mbox{ for } n+1\leq q\leq 2n.$$
For the generalized coeffective cohomology, from Propositions~\ref{propiedades-finitas}~(iii) and~\ref{propiedades-finitas-para-c-sombrero}~(iii) it follows that
$$\hat c_{n-k+1}(T^*M,\omega_0)=b_{n-k+1}(M)$$
and
$$c_q^{(k)}(T^*M,\omega_0)=b_q(M)+b_{q+2k-1}(M), \ \mbox{ for } n-k+2\leq q\leq 2n.$$
Furthermore, from Proposition~\ref{propiedades-finitas-suc-check}~(iii) we get
$$\check{c}_q^{(k)}(T^*M,\omega_0)=b_{q-2k+1}(M)+b_{q}(M), \ \mbox{ for } q\leq 2n+2k-1.$$
}
\end{example}

\medskip

In the following result we relate the generalized coeffective cohomology with the harmonic cohomology
via the
coeffective groups $\hat H^{1}(M),\ldots, \hat H^{n}(M)$.

\begin{theorem}\label{rel_armonica_coef}
Let $\M$ be a symplectic manifold of finite type.
The following relation holds for every $k=1,\ldots,n$:
$$
h_{n-k+1}(M) - h_{n+k+1}(M) = \hat c_{n-k+1}(M).
$$
\end{theorem}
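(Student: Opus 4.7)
The plan is to express both $h_{n-k+1}(M)$ and $h_{n+k+1}(M)$ via Theorem~\ref{harm}, and then collapse the two expressions by factoring $L^{k+1}=L^k\circ L$, using the identification of $\hat H^{n-k+1}(M)$ with the primitive subspace $P^{n-k+1}(M)$.

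First I would note that, taking $q=n-k+1$, one has $n-q+1=k$, so that
$$P^{n-k+1}(M)=\ker\bigl\{L^k\colon H^{n-k+1}(M)\longrightarrow H^{n+k+1}(M)\bigr\}.$$
Comparing with the short exact sequence preceding Proposition~\ref{propiedades-finitas-para-c-sombrero}, this kernel is precisely the image of $\hat{\imath}$, so $\dim P^{n-k+1}(M)=\hat c_{n-k+1}(M)$. I then apply Theorem~\ref{harm} to each term. Since $n-k+1\le n$ and $n+k+1\ge n+1$, the theorem gives
$$H^{n-k+1}_{\hr}(M)=P^{n-k+1}(M)+L\bigl(H^{n-k-1}_{\hr}(M)\bigr),\qquad H^{n+k+1}_{\hr}(M)=L^{k+1}\bigl(H^{n-k-1}_{\hr}(M)\bigr).$$
Setting $V:=L\bigl(H^{n-k-1}_{\hr}(M)\bigr)\subseteq H^{n-k+1}(M)$, the inclusion--exclusion formula for the dimension of a sum of subspaces yields
$$h_{n-k+1}(M)=\hat c_{n-k+1}(M)+\dim V-\dim\bigl(V\cap P^{n-k+1}(M)\bigr).$$

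The decisive step is to factor $L^{k+1}=L^k\circ L$, so that $L^{k+1}\bigl(H^{n-k-1}_{\hr}(M)\bigr)=L^k(V)$, and to apply rank--nullity to the restriction $L^k\colon V\to H^{n+k+1}(M)$. Using again that $\ker L^k$ in degree $n-k+1$ coincides with $P^{n-k+1}(M)$, this gives
$$h_{n+k+1}(M)=\dim V-\dim\bigl(V\cap \ker L^k\bigr)=\dim V-\dim\bigl(V\cap P^{n-k+1}(M)\bigr).$$
Subtracting the two displayed expressions, the $\dim V$ and intersection terms cancel and one is left with $h_{n-k+1}(M)-h_{n+k+1}(M)=\hat c_{n-k+1}(M)$, as required.

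I do not foresee a substantive obstacle; the main point of care will be to verify that the finite-type hypothesis really guarantees finite-dimensionality of every space in the argument (so that rank--nullity and inclusion--exclusion are legal), and to check that the degenerate extremes $k=n$ and $k=n-1$, where $V$ is either $0$ or $L(H^0_{\hr}(M))$, are covered by exactly the same identities.
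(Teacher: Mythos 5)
Your proposal is correct and follows essentially the same route as the paper's proof: identify $P^{n-k+1}(M)$ with $\hat H^{n-k+1}(M)$ via the long exact sequence, apply Theorem~\ref{harm} in degrees $n-k+1$ and $n+k+1$, use inclusion--exclusion for the sum $P^{n-k+1}(M)+L(H^{n-k-1}_{\hr}(M))$, and cancel via rank--nullity for $L^k$ restricted to $L(H^{n-k-1}_{\hr}(M))$. Your extra attention to the degenerate cases $k=n,n-1$ and to finite-dimensionality is a harmless refinement of the same argument.
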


\begin{proof}
By Theorem~\ref{harm}, $H^{n-k+1}_{\hr}(M)=P^{n-k+1}(M)+L(H^{n-k-1}_{\hr}(M))$. Hence,
$$
h_{n-k+1}(M) = \dim P^{n-k+1}(M) + \dim L(H^{n-k-1}_{\hr}(M)) - \dim (P^{n-k+1}(M)\cap L(H^{n-k-1}_{\hr}(M))).
$$

It follows from \eqref{selarga} that $P^{n-k+1}(M)$ is isomorphic to the space $\hat H^{n-k+1}(M)$,
and therefore, $\dim P^{n-k+1}(M)=\hat c_{n-k+1}(M)$. On the other hand,
$$
P^{n-k+1}(M)\cap L(H^{n-k-1}_{\hr}(M)) = \ker L^{k}\big\vert_{L(H^{n-k-1}_{\hr}(M))}.
$$
Now,
\begin{eqnarray*}h_{n-k+1}(M) \!&\!\!=\!\!& \hat c_{n-k+1}(M) + \dim L(H^{n-k-1}_{\hr}(M))
- \dim \left( \ker L^{k}\big\vert_{L(H^{n-k-1}_{\hr}(M))} \right)\\
\!&\!\!=\!\!& \hat c_{n-k+1}(M) +  \dim (L^{k+1}(H^{n-k-1}_{\hr}(M)))\\
\!&\!\!=\!\!& \hat c_{n-k+1}(M) + h_{n+k+1}(M).
\end{eqnarray*}
\end{proof}

From Proposition~\ref{propiedades-finitas-para-c-sombrero} we get directly upper and lower bounds for
the difference $h_{n-k+1}(M) - h_{n+k+1}(M)$. Moreover, the previous theorem, together
with Proposition~\ref{invtopologicos} and \eqref{rel-otra}, provides further relations between the harmonic
and the filtered cohomologies.

Next we derive some concrete relations of the harmonic cohomology with the groups $\check{H}^{q}_{(k)}(M)$.

\begin{proposition}\label{rel_armonica_coef-2}
Let $\M$ be a symplectic manifold of finite type.
For every $1\leq k\leq n$ we have
$$
0\leq \check{c}_{n+k}^{(k)}(M)-\hat c_{n-k+1}(M) \leq b_{n+k}(M) - h_{n+k}(M),
$$
where the latter equality holds if and only if $L^k(H^{n-k}_{\hr}(M))=L^k(H^{n-k}(M))$.
\end{proposition}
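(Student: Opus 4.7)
The plan is to deduce both inequalities and the equality criterion from two ingredients already available in the paper: the isomorphism \eqref{rel-hat-check} relating $\hat H^{n-k+1}(M)$ and $\check H^{n+k}_{(k)}(M)$, and the description of the top half of the symplectically harmonic cohomology given by Theorem~\ref{harm}.

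First I would extract the key dimension formula. Under the finite-type assumption, the isomorphism
$$\hat H^{n-k+1}(M)\cong \frac{\check{H}^{n+k}_{(k)}(M)}{H^{n+k}(M)/L^k(H^{n-k}(M))}$$
from Remark~\ref{comparison of indexes} is between finite-dimensional spaces (all three groups are finite-dimensional by Propositions~\ref{propiedades-finitas-para-c-sombrero} and~\ref{propiedades-finitas-suc-check}), so taking dimensions gives
$$\check{c}_{n+k}^{(k)}(M)-\hat c_{n-k+1}(M)=\dim\bigl(H^{n+k}(M)/L^k(H^{n-k}(M))\bigr)=b_{n+k}(M)-\dim L^k(H^{n-k}(M)).$$
The lower bound $0\leq \check{c}_{n+k}^{(k)}(M)-\hat c_{n-k+1}(M)$ is then immediate since $L^k(H^{n-k}(M))\subseteq H^{n+k}(M)$.

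For the upper bound, I would invoke the second identity of Theorem~\ref{harm} with $q=n+k$, which yields
$$H^{n+k}_{\hr}(M)=L^k(H^{n-k}_{\hr}(M)),$$
whence $h_{n+k}(M)=\dim L^k(H^{n-k}_{\hr}(M))$. Since $H^{n-k}_{\hr}(M)\subseteq H^{n-k}(M)$, we get $L^k(H^{n-k}_{\hr}(M))\subseteq L^k(H^{n-k}(M))$, and therefore $h_{n+k}(M)\leq \dim L^k(H^{n-k}(M))$. Substituting this into the dimension formula from the first step gives
$$\check{c}_{n+k}^{(k)}(M)-\hat c_{n-k+1}(M)=b_{n+k}(M)-\dim L^k(H^{n-k}(M))\leq b_{n+k}(M)-h_{n+k}(M).$$

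Finally, equality in the upper bound is the same as $h_{n+k}(M)=\dim L^k(H^{n-k}(M))$, i.e.\ the inclusion $L^k(H^{n-k}_{\hr}(M))\subseteq L^k(H^{n-k}(M))$ is an equality of finite-dimensional subspaces. This is the stated condition. No real obstacle is expected: the proof is essentially a bookkeeping calculation combining \eqref{rel-hat-check} with the identification $H^{n+k}_{\hr}(M)=L^k(H^{n-k}_{\hr}(M))$ from Theorem~\ref{harm}; the only point requiring minor attention is to ensure that the dimensions are finite so that the quotient identity can be taken numerically, which is guaranteed by the finite-type hypothesis via Propositions~\ref{propiedades-finitas-para-c-sombrero} and~\ref{propiedades-finitas-suc-check}.
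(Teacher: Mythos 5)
Your proof is correct and follows essentially the same route as the paper: taking dimensions in the isomorphism \eqref{rel-hat-check} to get $\check{c}_{n+k}^{(k)}(M)-\hat c_{n-k+1}(M)=b_{n+k}(M)-\dim L^k(H^{n-k}(M))$, and then bounding $\dim L^k(H^{n-k}(M))$ above by $b_{n+k}(M)$ and below by $h_{n+k}(M)=\dim L^k(H^{n-k}_{\hr}(M))$ via Theorem~\ref{harm}. The paper's proof is just a terser version of the same bookkeeping.
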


\begin{proof}
It follows from \eqref{rel-hat-check} that $\hat c_{n-k+1}(M)= \check{c}_{n+k}^{(k)}(M) - b_{n+k}(M) + \dim L^k(H^{n-k}(M))$.
Taking into account that $\dim L^k(H^{n-k}(M))\geq h_{n+k}(M)$ by Theorem~\ref{harm}, we conclude the relation.
\end{proof}

The following consequence will be useful later for symplectic manifolds of low dimension.

\begin{corollary}\label{rel_armonica_coef-2-cor}
Let $\M$ be a symplectic manifold of finite type.
Then:
\begin{eqnarray}
&&\check{c}_{2n}^{(n)}(M) = b_1(M) + b_{2n}(M) - h_{2n}(M),\nonumber\\[3pt]
&&\check{c}_{2n-1}^{(n-1)}(M) = b_2(M) + b_{2n-1}(M) - h_{2n-1}(M) - h_{2n}(M),\nonumber\\[3pt]
&&\check{c}_{2n-2}^{(n-2)}(M) = b_{2n-2}(M) + h_3(M) - h_{2n-2}(M) -h_{2n-1}(M).\nonumber
\end{eqnarray}
\end{corollary}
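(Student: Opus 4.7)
The plan is to apply Proposition~\ref{rel_armonica_coef-2} together with Theorem~\ref{rel_armonica_coef} to the three values $k=n,\,n-1,\,n-2$, and to exploit the low-degree fact that $H^q_{\hr}(M)=H^q(M)$ for $q=0,1,2$.

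First I would observe that the proof of Proposition~\ref{rel_armonica_coef-2} actually yields the identity
$$
\check{c}_{n+k}^{(k)}(M)=\hat c_{n-k+1}(M)+b_{n+k}(M)-\dim L^k(H^{n-k}(M)),
$$
and that the inequality there becomes an equality precisely when $L^k(H^{n-k}_{\hr}(M))=L^k(H^{n-k}(M))$. For $k\in\{n,n-1,n-2\}$ one has $n-k\in\{0,1,2\}$, and since every de Rham class of degree at most $2$ has a symplectically harmonic representative, $H^{n-k}_{\hr}(M)=H^{n-k}(M)$. Hence the equality case in Proposition~\ref{rel_armonica_coef-2} applies, giving
$$
\check{c}_{n+k}^{(k)}(M)=\hat c_{n-k+1}(M)+b_{n+k}(M)-h_{n+k}(M),
$$
where I have used the description of $H^{n+k}_{\hr}(M)$ from Theorem~\ref{harm} (for $n+k\geq n+1$) to identify $\dim L^k(H^{n-k}(M))=\dim L^k(H^{n-k}_{\hr}(M))=h_{n+k}(M)$.

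Next I would substitute Theorem~\ref{rel_armonica_coef}, namely $\hat c_{n-k+1}(M)=h_{n-k+1}(M)-h_{n+k+1}(M)$, into the previous equation to obtain the master formula
$$
\check{c}_{n+k}^{(k)}(M)=h_{n-k+1}(M)-h_{n+k+1}(M)+b_{n+k}(M)-h_{n+k}(M),
$$
valid for $k=n,n-1,n-2$.

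Finally I would specialize:
\begin{itemize}
\item[$k=n$:] using $h_1(M)=b_1(M)$ and $h_{2n+1}(M)=0$ gives the first identity.
\item[$k=n-1$:] using $h_2(M)=b_2(M)$ gives the second identity.
\item[$k=n-2$:] direct substitution gives the third identity, no further simplification being available since we only control $h_q=b_q$ up to $q=2$.
\end{itemize}
The only nontrivial step is recognizing that the inequality in Proposition~\ref{rel_armonica_coef-2} saturates in these ranges because of the low-degree harmonicity $H^{n-k}_{\hr}(M)=H^{n-k}(M)$; everything else is bookkeeping.
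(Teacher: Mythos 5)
Your argument is correct and follows the same route as the paper: saturate the inequality of Proposition~\ref{rel_armonica_coef-2} using $H^q_{\hr}(M)=H^q(M)$ for $q=0,1,2$ (so $L^k(H^{n-k}_{\hr}(M))=L^k(H^{n-k}(M))$ for $k=n,n-1,n-2$), then substitute Theorem~\ref{rel_armonica_coef}. The specializations at $k=n,n-1,n-2$ all check out, so this is just a more detailed write-up of the paper's proof.
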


\begin{proof}
Notice that $L^k(H^{n-k}_{\hr}(M))=L^k(H^{n-k}(M))$
is satisfied for $k=n, n-1$ and $n-2$ because
$H^q_\hr(M)=H^q(M)$ for $q=0,1,2$. Hence, it suffices to apply Proposition~\ref{rel_armonica_coef-2}
and use Theorem~\ref{rel_armonica_coef}
to relate the coeffective numbers with the harmonic cohomology.
\end{proof}

Next we show some other general properties that we will use later in the following sections.

\begin{proposition}\label{igualdades-general}
Let $\M$ be a symplectic manifold of finite type. Then:
\begin{enumerate}
\item[{\rm (i)}] $\hat c_1(M)=b_1(M)$, and $c_q^{(n)}(M)=b_q(M)$ for every $2\leq q\leq 2n$.
\item[{\rm (ii)}] For any $1\leq k\leq n$, $\check{c}_q^{(k)}(M)=c_{q-2k+1}^{(k)}(M)$ for every $n+k+1\leq q\leq 2n+2k-1$.
\end{enumerate}
\end{proposition}

\begin{proof}
(i) is clear from the definition of the generalized coeffective cohomology for $k=n$
and from the long exact sequence \eqref{selarga}. Equalities (ii) are direct from the definition of  $\check{H}^q_{(k)}(M)$.
\end{proof}

For closed manifolds one has additional relations.
For instance, the numbers $\check c_q^{(k)}(M)$ satisfy certain duality (see \cite[Proposition 4.8]{TY3}
for the corresponding duality for the filtered cohomology groups), whereas the harmonic number
$h_{2n-1}(M)$ is always even \cite[Lemma 1.14]{IRTU1}. The proof of these facts follows from the existence
of the usual non-singular pairing $p([\alpha],[\beta])=\int_M \alpha\wedge\beta$,
for $[\alpha]\in H^q(M)$ and $[\beta]\in H^{m-q}(M)$, valid on any closed $m$-dimensional manifold $M$.
In the following proposition we collect these results together with other properties.

\begin{proposition}\label{igualdades}
Let $\M$ be a closed symplectic manifold. Then:
\begin{enumerate}
\item[{\rm (i)}] For any $1\leq k\leq n-1$,  $c_q^{(k)}(M)=b_q(M)$ for every $2n-2k+1\leq q\leq 2n$.
\item[{\rm (ii)}] For any $1\leq k\leq n$,
$\check c_q^{(k)}(M)=\check c_{2n+2k-q-1}^{(k)}(M)$ for every $0\leq q\leq n+k-1$.
\item[{\rm (iii)}] $h_{2n}(M)=b_{2n}(M)$, and $h_{2n-1}(M)$ is always even.
\item[{\rm (iv)}] $h_{n-1}(M) - (\check c_{n+1}^{(1)}(M)-\hat c_n(M)) \leq h_{n+1}(M)\leq h_{n-1}(M)$.
\end{enumerate}
\end{proposition}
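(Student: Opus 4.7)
The plan is to deduce each of (i)--(iv) from the long exact sequences~\eqref{selarga} and~\eqref{sexl-E_k}, Theorem~\ref{harm}, and the fact that on a closed oriented $2n$-manifold one has $b_q(M)=b_{2n-q}(M)$ and $b_{2n}(M)=1$. For (i), Proposition~\ref{propiedades-finitas} already yields $c_q^{(k)}(M)=b_q(M)$ for $q\geq 2n-2k+2$, so only the degree $q=2n-2k+1$ remains. In the five-term sequence~\eqref{5terminos} at this $q$, $H^{q+2k}(M)=H^{2n+1}(M)=0$ kills the piece involving $\mathrm{im}\,f_{q+1}$, leaving $c_q^{(k)}(M)=b_q(M)+\dim\mathrm{im}\,f_q$. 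It then suffices to show that $f_q\colon H^{2n}(M)\to H^{2n-2k+1}_{(k)}(M)$ vanishes: the one-dimensional target $H^{2n}(M)$ is generated by $[\omega^n]=L_\omega^k[\omega^{n-k}]$, and the very definition of $f_q$ forces $f_q([\omega^n])=[d\omega^{n-k}]=0$.

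For (ii), the identifications of Remark~\ref{comparison with filtered} give, for $q\leq n+k-1$ and $r=2n+2k-q-1\geq n+k$, that $\check H^q_{(k)}(M)\cong F^{k-1}H^q_+(M)$ and $\check H^r_{(k)}(M)\cong F^{k-1}H^q_-(M)$. The symmetry $\check c_q^{(k)}(M)=\check c_r^{(k)}(M)$ therefore translates to the Poincar\'e-type duality $\dim F^{k-1}H^q_+(M)=\dim F^{k-1}H^q_-(M)$ on closed manifolds, which is precisely \cite[Proposition~4.8]{TY3}.

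For (iii), since $b_{2n}(M)=1$ with $H^{2n}(M)=\langle[\omega^n]\rangle$, Theorem~\ref{harm} gives $H^{2n}_\hr(M)\supseteq L^n(H^0_\hr(M))\ni[\omega^n]$, so $h_{2n}(M)=1=b_{2n}(M)$. For the parity of $h_{2n-1}(M)$, Theorem~\ref{harm} together with $H^1_\hr(M)=H^1(M)$ (recalled above for $q\leq 2$) gives $h_{2n-1}(M)=\dim L^{n-1}(H^1(M))$. Consider then the bilinear form $\phi(\alpha,\beta)=\int_M\alpha\wedge\omega^{n-1}\wedge\beta$ on $H^1(M)$; a short sign computation using $|\alpha|=|\beta|=1$ shows $\phi$ is skew-symmetric, and its associated linear map $H^1(M)\to H^1(M)^*$ is the composition of $L^{n-1}$ with the Poincar\'e-duality isomorphism $H^{2n-1}(M)\cong H^1(M)^*$, hence has rank equal to $h_{2n-1}(M)$. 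Since any skew form has even rank, this gives the parity (cf.\ \cite[Lemma~1.14]{IRTU1}).

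For (iv), the upper bound $h_{n+1}(M)\leq h_{n-1}(M)$ is immediate from Theorem~\ref{harm}. For the lower bound, since $L(H^{n-1}_\hr(M))=H^{n+1}_\hr(M)$, rank--nullity yields
$$h_{n-1}(M)-h_{n+1}(M)=\dim\bigl(\ker L\cap H^{n-1}_\hr(M)\bigr)\leq\dim\ker L\vert_{H^{n-1}(M)}.$$
On the other hand, using the formula \eqref{rel-hat-check} with $k=1$ (as in the proof of Proposition~\ref{rel_armonica_coef-2}) one obtains $\check c_{n+1}^{(1)}(M)-\hat c_n(M)=b_{n+1}(M)-\dim L(H^{n-1}(M))$; combining this with Poincar\'e duality $b_{n+1}(M)=b_{n-1}(M)$ and rank--nullity for $L\colon H^{n-1}(M)\to H^{n+1}(M)$ identifies the right-hand side with $\dim\ker L\vert_{H^{n-1}(M)}$. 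Chaining the two estimates gives $h_{n-1}(M)-h_{n+1}(M)\leq\check c_{n+1}^{(1)}(M)-\hat c_n(M)$, which rearranges to the stated lower bound. The main obstacle is (ii): the duality could in principle be derived from position-by-position integration pairings on complex~\eqref{complejo_nuevo_k}, but verifying that such a pairing descends to a perfect pairing on cohomology---in particular that the mixed differentials, including the second-order operator $D$, are adjoint up to sign on every piece---is delicate, so the economical route is to transport the statement to the filtered setting and invoke the Tsai--Tseng--Yau duality.
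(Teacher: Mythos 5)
Your proof is correct and follows essentially the same route as the paper: (i) via the long exact sequence together with the surjectivity of $L^k$ onto $H^{2n}(M)=\langle[\omega^n]\rangle$, (ii) by transporting to the filtered cohomologies and invoking \cite[Proposition~4.8]{TY3}, (iii) from the even rank of $L^{n-1}$ on $H^1(M)$ (you re-derive \cite[Lemma~1.14]{IRTU1} via the skew pairing the paper itself alludes to, rather than just citing it), and (iv) by the identical rank--nullity computation combined with \eqref{rel-hat-check}. The only blemish is the phrase ``the one-dimensional target $H^{2n}(M)$'' in (i), where you mean the domain of $f_q$; the argument itself is fine.
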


\begin{proof}
By definition of the coeffective cohomology one always has that
$c_q^{(k)}(M)=b_q(M)$ for any $1\leq k\leq n-1$ and for every $q\geq 2n-2k+2$.
Moreover, since $M$ is closed, $H^{2n}(M)=\langle [\omega^n] \rangle$
and $L^k\colon H^{2n-2k}(M)\longrightarrow H^{2n}(M)$ is surjective, so the long exact sequence
\eqref{selarga} implies $c_{2n-2k+1}^{(k)}(M)=b_{2n-2k+1}(M)$. This proves~(i).

Property (ii) follows from \cite[Proposition 4.8]{TY3} taking into account the identifications
given in Remark~\ref{comparison with filtered}.

The proof of (iii) is a consequence of the fact that the rank of $L^{n-1}\colon H^1(M)\longrightarrow H^{2n-1}(M)$ is
always an even number \cite[Lemma 1.14]{IRTU1}.

To prove (iv),
since $H^{n+1}_{\hr}(M) = L(H^{n-1}_{\hr}(M))$, we have
$$
h_{n-1}(M) = h_{n+1}(M) + \dim  \left(\ker \{L\colon H^{n-1}_{\hr}(M) \to H^{n+1}(M) \}\right).
$$
Therefore,
\begin{eqnarray*}
h_{n-1}(M) - h_{n+1}(M) &=& \dim  \left(\ker \{L\colon H^{n-1}_{\hr}(M) \to H^{n+1}(M) \}\right) \\
&\leq& \dim  \left(\ker \{L\colon H^{n-1}(M) \to H^{n+1}(M) \}\right)\\
&=& b_{n-1}(M) - \dim \left(\text{im}\,\{L\colon H^{n-1}(M)\to H^{n+1}(M)\}\right)\\
&=& b_{n+1}(M) - \dim \left(\text{im}\,\{L\colon H^{n-1}(M)\to H^{n+1}(M)\}\right)\\
&=& \check c_{n+1}^{(1)}(M)-\hat c_n(M),
\end{eqnarray*}
where the last equality follows from Remark~\ref{comparison of indexes}.
\end{proof}

Notice that Theorem~\ref{rel_armonica_coef} does not provide any relation for $h_{n+1}$, so (iv) in the
above proposition provides upper and lower bounds for the harmonic number $h_{n+1}$ on closed symplectic manifolds.

\begin{corollary}\label{n_n-1_coeffective}
Let $\M$ be a closed symplectic manifold. Then,
\begin{equation}\label{coefectivos-topologicos}
\hat c_1(M)=\check{c}_{2n}^{(n)}(M)=b_1(M) \quad\mbox{ and }\quad \hat c_2(M)=b_2(M)-1.
\end{equation}
Hence, the generalized coeffective cohomology groups \eqref{todos-los-gen-coef-groups} for $k=n$ and $n-1$,
as well as the $n$-filtered cohomology groups are topological invariants.
\end{corollary}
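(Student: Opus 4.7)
The plan is to establish the three numerical identities first, and then deduce the topological invariance statement by assembling them with the preceding results.

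For $\hat c_1(M)=b_1(M)$ there is nothing to do: this is precisely the content of Lemma~\ref{igualdades-general}~(i). For $\check c_{2n}^{(n)}(M)=b_1(M)$, I would invoke the first equality of Corollary~\ref{rel_armonica_coef-2-cor}, namely $\check c_{2n}^{(n)}(M)=b_1(M)+b_{2n}(M)-h_{2n}(M)$, and cancel $b_{2n}(M)-h_{2n}(M)$ using Proposition~\ref{igualdades}~(iii), which says $h_{2n}(M)=b_{2n}(M)$ on any closed manifold.

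The identity $\hat c_2(M)=b_2(M)-1$ is the only one requiring a small argument. Specialize the five-term exact sequence
\[
0\longrightarrow \hat H^{n-k+1}(M)\longrightarrow H^{n-k+1}(M)\xrightarrow{\;L^k\;} H^{n+k+1}(M)\longrightarrow \operatorname{im} f_{n-k+2}\longrightarrow 0
\]
(obtained from \eqref{selarga}) to $k=n-1$, so that it reads
\[
0\longrightarrow \hat H^2(M)\longrightarrow H^2(M)\xrightarrow{\;L^{n-1}\;} H^{2n}(M)\longrightarrow \operatorname{im} f_3\longrightarrow 0.
\]
Since $M$ is closed, $H^{2n}(M)$ is one-dimensional, generated by $[\omega^n]$, and the map $L^{n-1}$ sends $[\omega]\in H^2(M)$ to $[\omega^n]$, so $L^{n-1}$ is surjective. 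Hence $\operatorname{im} f_3=0$ and taking dimensions gives $\hat c_2(M)=b_2(M)-b_{2n}(M)=b_2(M)-1$.

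For the topological invariance: all three of the numbers above are expressed in terms of Betti numbers, so the spaces they measure are topological invariants. For the remaining entries in the collection \eqref{todos-los-gen-coef-groups} with $k=n$ we have $c_q^{(n)}(M)=b_q(M)$ for $2\le q\le 2n$ by Lemma~\ref{igualdades-general}~(i); for $k=n-1$ the standard vanishing after the long exact sequence gives $c_q^{(n-1)}(M)=b_q(M)$ for $q\ge 2n-2(n-1)+2=4$, while Proposition~\ref{igualdades}~(i) (applied with $k=n-1$) covers the remaining degree $q=3$. For the $n$-filtered groups $\check H^q_{(n)}(M)$, $0\le q\le 4n-1$: the range $q\le 2n-2$ is handled by $\check H^q_{(n)}(M)=H^q(M)$; the range $q\ge 2n+1$ by Lemma~\ref{igualdades-general}~(ii) followed by Lemma~\ref{igualdades-general}~(i); the value $q=2n$ was just shown to equal $b_1(M)$; and the remaining value $q=2n-1$ follows from the duality of Proposition~\ref{igualdades}~(ii), which gives $\check c_{2n-1}^{(n)}(M)=\check c_{2n}^{(n)}(M)=b_1(M)$. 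The main (modest) obstacle is the computation of $\hat c_2(M)$, which is really just the observation that on a closed symplectic manifold multiplication by $[\omega^{n-1}]$ is automatically surjective onto $H^{2n}(M)$.
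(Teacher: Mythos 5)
Your proof is correct and follows the paper's scheme in all but one sub-step. For $\hat c_1=b_1$ and $\check c_{2n}^{(n)}=b_1$ you use exactly the same ingredients as the paper (Lemma~\ref{igualdades-general}~(i), Corollary~\ref{rel_armonica_coef-2-cor} and $h_{2n}=b_{2n}$ from Proposition~\ref{igualdades}~(iii)), and your enumeration of the remaining groups for $k=n,n-1$ and for the $n$-filtered cohomology (via Lemma~\ref{igualdades-general}~(ii), Proposition~\ref{igualdades}~(i) and the duality of Proposition~\ref{igualdades}~(ii)) matches what the paper leaves implicit. The one genuine divergence is the identity $\hat c_2=b_2-1$: the paper obtains it by combining the topological formula for $\chi^{(n-1)}$ from Proposition~\ref{invtopologicos} with the equalities $c_q^{(n-1)}=b_q$ for $q\geq 3$, whereas you read it off directly from the five-term exact sequence, using that $L^{n-1}\colon H^2(M)\to H^{2n}(M)$ is onto because $[\omega]\mapsto[\omega^n]\neq 0$ generates the one-dimensional top cohomology of a closed manifold. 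Your route is more self-contained and makes the geometric reason (automatic surjectivity of $L^{n-1}$ onto $H^{2n}$) explicit, at the cost of not reusing the already-established $\chi^{(k)}$ machinery; the underlying mechanism is the same one that powers Proposition~\ref{igualdades}~(i), so the two arguments are equivalent in substance.
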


\begin{proof}
The formulas for
$\chi^{(n)}$ and $\chi^{(n-1)}$ given in Proposition~\ref{invtopologicos}
together with part (i) in Proposition~\ref{igualdades-general} and Proposition~\ref{igualdades} imply
\eqref{coefectivos-topologicos}, so all the generalized $n$- and $(n-1)$-coeffective numbers
are topological.
For the $n$-filtered cohomology it suffices to use Corollary~\ref{rel_armonica_coef-2-cor}
and the fact that $\check c^{(n)}_q = c^{(n)}_{q-2n+1}$ for $2n+1\leq q\leq 4n-1$.
\end{proof}

We finish this section noticing that from Theorem~\ref{harm} it follows directly that an analogous result to Lemmas~\ref{iso-equiv-coef}
and~\ref{iso-equiv-filtered} also holds for the symplectically harmonic cohomology \cite[Proposition~1]{Ym},
and in the case of solvmanifolds satisfying the Mostow condition, a result similar to
Propositions~\ref{calculo-coef-solvariedades} and~\ref{calculo-filtered-solvariedades}
is also valid (this was first observed in \cite[Proposition~2]{Ym}, see also \cite{IRTU1,IRTU2}, for
the class of symplectic nilmanifolds).

\section{Symplectic flexibility of closed manifolds}\label{flex}

\noindent In this section we focus on closed symplectic manifolds, for which we introduce
a notion of flexibility for the generalized coeffective and filtered cohomologies, 
as analogous notions of the concept of harmonic flexibility introduced and studied in \cite{IRTU1, Yan}
and motivated by a question raised by Khesin and McDuff.
Furthermore, we study their relations with the harmonic flexibility.

In what follows, $M$ will refer to a closed smooth manifold admitting symplectic forms. 

\begin{definition}\label{flexible}
{\rm
A $2n$-dimensional $M$ is said to be
\begin{enumerate}
\item[{\rm (i)}] \texttt{c}-\emph{flexible}, if $M$ possesses a continuous family of symplectic forms $\omega_t$, where $t\in [a,b]$,
such that $\hat c_{n-k+1}(M,\omega_a)\not=\hat c_{n-k+1}(M,\omega_b)$ or $c_{q}^{(k)}(M,\omega_a)\not=c_{q}^{(k)}(M,\omega_b)$ for some
$1\leq k\leq n$ and $n-k+2\leq q\leq 2n$;
\item[{\rm (ii)}] \texttt{f}-\emph{flexible}, if $M$ possesses a continuous family of symplectic forms $\omega_t$, $t\in [a,b]$,
such that $\check c_{q}^{(k)}(M,\omega_a)\not=\check c_{q}^{(k)}(M,\omega_b)$ for some $1\leq k\leq n$ and $0\leq q\leq 2n+2k-1$;
\item[{\rm (iii)}] \texttt{h}-\emph{flexible}, if $M$ possesses a continuous family of symplectic forms $\omega_t$, $t\in [a,b]$,
such that $h_{q}(M,\omega_a)\not=h_{q}(M,\omega_b)$ for some $0\leq q\leq 2n$.
\end{enumerate}
}
\end{definition}

Notice that \texttt{h}-flexible manifolds are precisely the flexible manifolds in \cite{IRTU1}.

\medskip

Since $H^q_\hr(M)=H^q(M)$ for $q=1,2$,
we have
$h_{2n-q}(M)=\dim ({\rm Im}\,\{L^{n-q}\colon H^{q}(M) \longrightarrow H^{2n-q}(M)\})$
for $q=1,2$ by Theorem~\ref{harm}.
Now, if $\omega_t$ is a continuous family of symplectic structures on $M$, $t\in [a,b]$,
then it is clear that
\begin{equation}\label{lower-semicontinuous}
h_{2n-1}(M,\omega_t) \geq h_{2n-1}(M,\omega_a)\quad \mbox{ and }\quad h_{2n-2}(M,\omega_t) \geq h_{2n-2}(M,\omega_a),
\end{equation}
that is, these symplectically harmonic numbers satisfy a ``lower-semicontinuous'' property.

In the following result we observe that an ``upper-semicontinuous'' property holds for
all the coeffective and the filtered numbers.

\begin{proposition}\label{upper-semicontinuous}
Let $\omega_t$ be a continuous family of symplectic structures on $M$, for $t\in [a,b]$. Then, for any $1\leq k\leq n$
the following inequalities hold:
\begin{eqnarray*}
&& \hat c_{n-k+1}(M,\omega_t) \leq \hat c_{n-k+1}(M,\omega_a),\\[5pt]
&& c_{q}^{(k)}(M,\omega_t) \leq c_{q}^{(k)}(M,\omega_a), \mbox{ for every } n-k+2\leq q\leq 2n,\\[5pt]
&& \check{c}_{n+k}^{(k)}(M,\omega_t)\leq \check{c}_{n+k}^{(k)}(M,\omega_a), \mbox{ for every } 0\leq q\leq 2n+2k-1.
\end{eqnarray*}
\end{proposition}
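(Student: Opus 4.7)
The plan is to express each of the numbers $\hat c_{n-k+1}(M,\omega)$, $c_q^{(k)}(M,\omega)$, and $\check c_q^{(k)}(M,\omega)$ as a combination of Betti numbers (which are topological and hence constant along the family $\omega_t$) minus the ranks of Lefschetz-type maps $L^k_\omega\colon H^p(M)\to H^{p+2k}(M)$ on the fixed de Rham cohomology, and then to invoke the classical lower-semicontinuity of rank for a continuously varying family of linear maps between fixed finite-dimensional spaces. Since $M$ is closed, each $H^p(M)$ is indeed a fixed finite-dimensional vector space independent of $t$.

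For the formulas, I would extract from the long exact sequence \eqref{selarga} the five-term exact sequence
$$
H^{q-1}(M)\xrightarrow{L^k} H^{q+2k-1}(M)\to H^q_{(k)}(M)\to H^q(M)\xrightarrow{L^k} H^{q+2k}(M),
$$
which by additivity of dimension yields
$$
c_q^{(k)}(M,\omega)=b_q(M)+b_{q+2k-1}(M)-\operatorname{rank}L^k_\omega\bigr|_{H^{q-1}(M)}-\operatorname{rank}L^k_\omega\bigr|_{H^{q}(M)}.
$$
The short exact sequence preceding Proposition~\ref{propiedades-finitas-para-c-sombrero} gives analogously
$$
\hat c_{n-k+1}(M,\omega)=b_{n-k+1}(M)-\operatorname{rank}L^k_\omega\bigr|_{H^{n-k+1}(M)},
$$
and the same procedure applied to the five-term piece of \eqref{sexl-E_k}, namely
$$
H^{q-2k}(M)\xrightarrow{L^k} H^q(M)\to \check H^q_{(k)}(M)\to H^{q-2k+1}(M)\xrightarrow{L^k} H^{q+1}(M),
$$
produces
$$
\check c_q^{(k)}(M,\omega)=b_q(M)+b_{q-2k+1}(M)-\operatorname{rank}L^k_\omega\bigr|_{H^{q-2k}(M)}-\operatorname{rank}L^k_\omega\bigr|_{H^{q-2k+1}(M)}.
$$

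Next I would observe that the Lefschetz maps $L^k_{\omega_t}$ depend continuously on $t$ at the level of cohomology: since $\omega_t$ is continuous in $t$ so is $\omega_t^k$, hence the cohomology class $[\omega_t^k]\in H^{2k}(M)$ varies continuously, and $L^k_{\omega_t}=[\omega_t^k]\cup(\cdot)$ is a continuous family of linear maps between the fixed spaces $H^p(M)$ and $H^{p+2k}(M)$. The rank of such a family is lower-semicontinuous; thus for every $t$ in a suitable neighbourhood of $a$ (which one may take to be the whole of $[a,b]$ by rescaling the parameter if necessary), one has $\operatorname{rank}L^k_{\omega_t}\bigr|_{H^p(M)}\geq \operatorname{rank}L^k_{\omega_a}\bigr|_{H^p(M)}$ for every relevant $p$. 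Substituting these inequalities into the three formulas above immediately yields the three claimed inequalities, and in fact shows that each of the functions $t\mapsto \hat c_{n-k+1}(M,\omega_t)$, $t\mapsto c_q^{(k)}(M,\omega_t)$, $t\mapsto \check c_q^{(k)}(M,\omega_t)$ is upper-semicontinuous in $t$. The only substantive point in the argument is the verification that cohomology-level cup product with $[\omega_t^k]$ depends continuously on $t$, which is routine once one remarks that the de Rham cohomology of $M$ is a topological invariant and the pairing $H^p(M)\otimes H^{2k}(M)\to H^{p+2k}(M)$ is a continuous bilinear map on fixed finite-dimensional spaces.
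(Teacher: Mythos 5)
Your proof is correct and is essentially the paper's argument made explicit: the paper's entire proof is the one-line remark that the inequalities follow directly from the long exact sequences \eqref{selarga} and \eqref{sexl-E_k}, and your formulas expressing $\hat c_{n-k+1}$, $c_q^{(k)}$ and $\check c_q^{(k)}$ as fixed Betti numbers minus ranks of the Lefschetz maps $L^k$ on the (topologically determined) de Rham cohomology, combined with lower semicontinuity of rank for a continuous family of linear maps, are precisely what that remark amounts to. The only blemish is your parenthetical claim that the neighbourhood of $a$ may be taken to be all of $[a,b]$ ``by rescaling the parameter'' --- a reparametrization does not enlarge the set of parameter values on which the rank inequality holds --- but what is really being asserted is an upper-semicontinuity property, which your argument does establish, and the paper's own formulation carries the same imprecision.
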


\begin{proof}
It follows directly from the long exact sequences \eqref{selarga} and \eqref{sexl-E_k}.
\end{proof}

Next we study relations among the three different types of flexibility. We begin in dimension four.


\begin{theorem}\label{dim4}
Let $M$ be a $4$-dimensional closed manifold. Then:
\begin{enumerate}
\item[{\rm (i)}] $M$ is never \emph{\texttt{c}}-flexible;
\item[{\rm (ii)}] $M$ is \emph{\texttt{f}}-flexible if and only if it is \emph{\texttt{h}}-flexible.
\end{enumerate}
\end{theorem}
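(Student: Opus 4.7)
The plan is to exploit that in dimension four we have $n=2$ and $k\in\{1,2\}$, so one can enumerate every generalized coeffective and every filtered number that appears and check topologicity using the structural results of Sections~\ref{gen-coef-cohom}--\ref{relacion-con-armonica}. The guiding observation is that all but one of these numbers will turn out to be topological invariants, and the sole surviving symplectic invariant will be controlled precisely by $h_3$.

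For part (i), the generalized coeffective numbers are, for $k=2$: $\hat c_1,\, c_2^{(2)},\, c_3^{(2)},\, c_4^{(2)}$, and for $k=1$: $\hat c_2,\, c_3^{(1)},\, c_4^{(1)}$. Lemma~\ref{igualdades-general}(i) gives $\hat c_1=b_1$ and $c_q^{(2)}=b_q$ for $q=2,3,4$; Corollary~\ref{n_n-1_coeffective} gives $\hat c_2=b_2-1$; Proposition~\ref{igualdades}(i) with $k=1=n-1$ gives $c_3^{(1)}=b_3$ and $c_4^{(1)}=b_4$. All seven numbers are topological, so no continuous family can vary any of them, proving (i).

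For (ii) the strategy is to isolate the one filtered number that is not automatically topological. For $k=2$ and $0\le q\le 7$: Lemma~\ref{igualdades-general}(ii) identifies $\check c_q^{(2)}=c_{q-3}^{(2)}=b_{q-3}$ for $q\ge 5$; the duality $\check c_q^{(2)}=\check c_{7-q}^{(2)}$ of Proposition~\ref{igualdades}(ii) pins $\check c_q^{(2)}$ for $q\le 2$; and the central equality $\check c_3^{(2)}=\check c_4^{(2)}$ is forced to equal $b_1$ via $\check c_{2n}^{(n)}=b_1$ in Corollary~\ref{n_n-1_coeffective}. For $k=1$ and $0\le q\le 5$: Lemma~\ref{igualdades-general}(ii) gives $\check c_4^{(1)}=b_3$ and $\check c_5^{(1)}=b_4$, duality then determines $\check c_0^{(1)},\check c_1^{(1)}$, and only the central pair $\check c_2^{(1)}=\check c_3^{(1)}$ remains. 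For this, Corollary~\ref{rel_armonica_coef-2-cor} with $n=2$ and $k=n-1=1$ gives
$$
\check c_3^{(1)}(M)\;=\;b_2(M)+b_3(M)-h_3(M)-h_4(M)\;=\;b_2(M)+b_3(M)-h_3(M)-b_4(M),
$$
using $h_4=b_4$ from Proposition~\ref{igualdades}(iii).

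On the harmonic side, $h_0=b_0$, $h_1=b_1$ and $h_2=b_2$ since $H^q_\hr=H^q$ for $q\le 2$, while $h_4=b_4$ by Proposition~\ref{igualdades}(iii); thus the only symplectically harmonic number free to vary in a family is $h_3$. Combining this with the displayed formula, $\check c_3^{(1)}(M,\omega_t)$ varies with $t$ if and only if $h_3(M,\omega_t)$ does, yielding \texttt{f}-flexibility $\iff$ \texttt{h}-flexibility. The main point requiring care is the bookkeeping that leaves exactly one undetermined filtered invariant in each of the two ranges ($2n+2k-1\in\{5,7\}$), after which one invokes Corollary~\ref{rel_armonica_coef-2-cor} to express that invariant as a topological quantity minus $h_3$.
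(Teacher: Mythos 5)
Your proof is correct and follows essentially the same route as the paper: for (i) you reduce all generalized coeffective numbers to Betti numbers via Lemma~\ref{igualdades-general}(i), Proposition~\ref{igualdades}(i) and Corollary~\ref{n_n-1_coeffective}, and for (ii) you use Lemma~\ref{igualdades-general}(ii) together with the duality of Proposition~\ref{igualdades}(ii) to isolate $\check c_3^{(1)}$ and $\check c_4^{(2)}$, then apply Corollary~\ref{rel_armonica_coef-2-cor} to obtain $\check c_4^{(2)}=b_1$ and $\check c_3^{(1)}=b_2+b_3-h_3-b_4$, which is exactly the paper's key identity $\check c_3^{(1)}=b_1+b_2-h_3-1$. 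The only (immaterial) divergence is that you quote $\check c_4^{(2)}=b_1$ directly from Corollary~\ref{n_n-1_coeffective} rather than rederiving it from Corollary~\ref{rel_armonica_coef-2-cor}.
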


\begin{proof}
Since $n=2$, we need to study the coeffective numbers $c_q^{(1)}$ for $q=3,4$, $c_q^{(2)}$ for $q=2,3,4$, $\hat c_1$ and $\hat c_2$. 
Proposition~\ref{igualdades-general}~(i), Proposition~\ref{igualdades}~(i) and
Corollary~\ref{n_n-1_coeffective} imply that $c_{2}^{(2)}=b_2$, $c_{3}^{(1)}=c_{3}^{(2)}=b_3$, $c_{4}^{(1)}=c_{4}^{(2)}=b_4$,
$\hat c_{1}=b_1$ and $\hat c_{2}=b_2-1$. Therefore, $M$ cannot be \texttt{c}-flexible and (i) is proved.

By Proposition~\ref{igualdades-general}~(ii) we have $\check{c}_q^{(k)}=c_{q-2k+1}^{(k)}$ for $k=1,2$ and for any $3+k\leq q\leq 3+2k$, so they
are topological invariants.
By the duality given in Proposition~\ref{igualdades}~(ii), it remains to study $\check{c}_3^{(1)}$ and $\check{c}_4^{(2)}$.

Since $h_q=b_q$ for $q=0,1,2$ and $4$,
the first two equalities in Corollary~\ref{rel_armonica_coef-2-cor} imply that $\check c_{4}^{(2)}=b_1$ and
$\check{c}_3^{(1)}=b_2-b_0+b_3-h_3$, i.e.
$$
\check{c}_3^{(1)}=b_1+b_2-h_3-1.
$$
Therefore, $M$ is \emph{\texttt{f}}-flexible
iff $M$ is \emph{\texttt{h}}-flexible, since $\check{c}_3^{(1)}(M,\omega_t)$ varies along a family of symplectic forms $\omega_t$
iff $h_3(M,\omega_t)$ varies.
\end{proof}

The previous proof shows that the fundamental relation between flexibilities on a 4-dimensional manifold~$M$ is
$$
\check{c}_3^{(1)}(M,\omega_t)=b_1(M)+b_2(M)-h_3(M,\omega_t)-1.
$$

\begin{corollary}\label{no-flex-4-dim}
Let $M$ be a $4$-dimensional closed manifold.
If the first Betti number $b_1(M)\leq 1$ then $M$ is not \emph{\texttt{f}}-flexible.
\end{corollary}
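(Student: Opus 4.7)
I would argue by showing that when $b_1(M) \leq 1$, every single harmonic number $h_q(M,\omega)$ is forced to be topological, so $M$ cannot be \texttt{h}-flexible, and then invoke Theorem~\ref{dim4}~(ii) to conclude that $M$ is not \texttt{f}-flexible either.

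First I would reduce \texttt{f}-flexibility to \texttt{h}-flexibility using Theorem~\ref{dim4}~(ii), which already tells us that in dimension four the two notions coincide. So the task becomes showing that no $h_q(M,\omega_t)$ can vary. Among the five numbers $h_0, h_1, h_2, h_3, h_4$, the first three equal $b_0, b_1, b_2$ (since every de Rham class up to degree $2$ has a harmonic representative), and $h_4 = b_4$ by Proposition~\ref{igualdades}~(iii). Hence only $h_3$ could possibly vary.

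The key step is then to pin down $h_3$ when $b_1(M) \leq 1$. By Theorem~\ref{harm} applied with $n=2$ and $q=3$, one has
\[
h_3(M,\omega) \;=\; \dim \mathrm{Im}\bigl\{L\colon H^1(M)\longrightarrow H^3(M)\bigr\},
\]
so in particular $h_3(M,\omega) \leq b_1(M) \leq 1$. On the other hand, Proposition~\ref{igualdades}~(iii) asserts that $h_{2n-1}(M) = h_3(M)$ is always an \emph{even} integer. Combining the two constraints, one finds that $h_3(M,\omega) = 0$ for every symplectic form $\omega$ on $M$.

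Therefore, for any continuous family $\omega_t$ of symplectic forms on $M$, every harmonic number $h_q(M,\omega_t)$ is constant in $t$, so $M$ is not \texttt{h}-flexible. By Theorem~\ref{dim4}~(ii), $M$ is not \texttt{f}-flexible either. The only subtle point in this plan is the parity constraint from Proposition~\ref{igualdades}~(iii); without it, the case $b_1(M) = 1$ would leave open the possibility $h_3 \in \{0,1\}$, which is exactly where the argument would fail.
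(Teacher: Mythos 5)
Your proof is correct and follows essentially the same route as the paper: reduce \texttt{f}-flexibility to the single number $h_3$ via Theorem~\ref{dim4}, then combine the parity of $h_3$ from Proposition~\ref{igualdades}~(iii) with the bound $h_3\leq 1$ to force $h_3=0$. The only cosmetic difference is that you obtain $h_3\leq b_1$ from Theorem~\ref{harm}, whereas the paper uses $h_3\leq b_3$ (with $b_3=b_1$ by Poincar\'e duality); both are valid.
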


\begin{proof}
Proposition~\ref{igualdades}~(iii) implies that $h_3$ is an even number for any symplectic form. Since $h_3\leq b_3$
then $h_3=0$ and therefore $M$ cannot be flexible.
\end{proof}

In particular there do not exist simply-connected closed 4-manifolds which are \emph{\texttt{f}}-flexible.

On the other hand,
Yan \cite{Yan} proved that there are no \emph{\texttt{h}}-flexible 4-dimensional nilmanifolds.
Even more, one can see that the same holds in the bigger class of completely solvable solvmanifolds.
For that, by the classification given in \cite[Table 2]{Bock} it remains to check that
a solvmanifold based on the Lie algebra
$de^1=e^{13}$, $de^2=-e^{23}$, $de^3=de^4=0$ is not \emph{\texttt{h}}-flexible.
In fact, any invariant symplectic structure is of the form
$\omega=A\,e^{12}+B\,e^{13}+C\,e^{23}+D\,e^{34}$,
with $AD\not=0$, and the number $h_3$ only depends on the element $[[\omega]]$ in $\mathbb{P}(H^2(M))$
(see Remark~\ref{coef-inv-cohom-class}),
so we can suppose that $A=1$ and $B=C=0$ because $[e^{13}]=[e^{23}]=0$.
Thus, it suffices to study the
family $\omega_t=e^{12}+t\,e^{34}$, with $t\not=0$.
A direct calculation shows
$$
L_{[\omega_t]}(H^1(M))=\langle [\omega_t\wedge e^3], [\omega_t\wedge e^4] \rangle=\langle [e^{123}],[e^{124}] \rangle=H^3(M),
$$
i.e.
$h_3(\omega_t)=b_3$ for any $t\not=0$, and therefore the solvmanifold is not \emph{\texttt{h}}-flexible. Hence:

\begin{proposition}\label{no-flex-4-dim-solv}
Any $4$-dimensional completely solvable solvmanifold is not \emph{\texttt{c}}-flexible, \emph{\texttt{f}}-flexible
or \emph{\texttt{h}}-flexible.
\end{proposition}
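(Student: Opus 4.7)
The strategy is to combine Theorem~\ref{dim4} with a case analysis dictated by the classification of $4$-dimensional completely solvable solvmanifolds admitting a symplectic form. Since Theorem~\ref{dim4}~(i) already rules out \texttt{c}-flexibility in dimension four, and Theorem~\ref{dim4}~(ii) shows that \texttt{f}-flexibility coincides with \texttt{h}-flexibility, the entire problem reduces to proving that no such solvmanifold is \texttt{h}-flexible.

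First I would invoke Hattori's theorem to reduce the computation of $H^*(M)$ to the Chevalley--Eilenberg complex of the Lie algebra $\frg$, and observe that (by the completely solvable version of the remark after Proposition~\ref{calculo-coef-solvariedades}) the harmonic cohomology can likewise be computed at the invariant level, as noted in \cite{Ym,IRTU1,IRTU2}. Next, I would go through the list of $4$-dimensional completely solvable symplectic Lie algebras given in \cite[Table~2]{Bock}. All nilpotent cases are excluded by Yan's result that no $4$-dimensional nilmanifold is \texttt{h}-flexible, so only the one genuinely non-nilpotent Lie algebra $\frg$ defined by $de^1=e^{13}$, $de^2=-e^{23}$, $de^3=de^4=0$ remains to be analyzed.

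For this $\frg$, I would first write the general invariant symplectic form as $\omega=A\,e^{12}+B\,e^{13}+C\,e^{23}+D\,e^{34}$ and verify via $d\omega=0$ and $\omega\wedge\omega\neq 0$ that the admissibility condition is exactly $AD\neq 0$. Using Remark~\ref{coef-inv-cohom-class} (symplectic cohomologies depend only on $[[\omega]]\in\mathbb{P}(H^2(M))$) together with the fact that $[e^{13}]=[e^{23}]=0$ in $H^2(M)$, I can normalize to the one-parameter family $\omega_t=e^{12}+t\,e^{34}$, $t\neq 0$. The essential calculation is then to compute $L_{[\omega_t]}\colon H^1(M)\to H^3(M)$ on the invariant representatives $e^3,e^4$ of a basis of $H^1(M)$, show the image spans $\langle [e^{123}],[e^{124}]\rangle=H^3(M)$, and invoke Theorem~\ref{harm} to conclude $h_3(\omega_t)=b_3$ for every $t\neq 0$. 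Since $h_0,h_1,h_2,h_4$ are topological in dimension four, no harmonic number can vary with $t$.

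The main obstacle is ensuring that the normalization $\omega_t=e^{12}+t\,e^{34}$ actually exhausts, up to Remark~\ref{coef-inv-cohom-class}, all projective classes of invariant symplectic forms: one needs $[e^{12}]$ and $[e^{34}]$ to be linearly independent nonzero classes in $H^2(M)$, while $[e^{13}]$ and $[e^{23}]$ must vanish. These are precisely the Lie-algebra cohomology facts one reads off from the differentials on $\frg^*$, so verifying them is routine but must be done carefully. Once this is confirmed, the rest of the argument is a direct computation and the three non-flexibility statements all follow from the single calculation above combined with Theorem~\ref{dim4}.
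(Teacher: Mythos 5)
Your proposal is correct and follows essentially the same route as the paper: reduce via Theorem~\ref{dim4} to ruling out \texttt{h}-flexibility, use Bock's classification and Yan's result on nilmanifolds to isolate the single non-nilpotent algebra $de^1=e^{13}$, $de^2=-e^{23}$, $de^3=de^4=0$, normalize the invariant symplectic form to $\omega_t=e^{12}+t\,e^{34}$ using $[e^{13}]=[e^{23}]=0$, and compute $L_{[\omega_t]}(H^1(M))=H^3(M)$ so that $h_3=b_3$ for all $t\neq 0$. This is exactly the argument given in the paper.
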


However, there exist $4$-dimensional closed manifolds which are \emph{\texttt{h}}-flexible,
as it was proved in \cite[Corollary 4.2]{Yan} and \cite[Proposition 3.2]{IRTU1}.
In fact, if $(M^4,\omega)$ is a closed symplectic manifold satisfying the conditions
\begin{enumerate}
\item[(i)] the homomorphism $L\colon H^1(M)\longrightarrow H^3(M)$ is trivial,
\item[(ii)] the cup product $H^1(M)\otimes H^2(M) \longrightarrow H^3(M)$ is non-trivial,
\end{enumerate}
then $M$ is \emph{\texttt{h}}-flexible.
Since Gompf \cite[Observation 7]{Gompf} proved the existence of 4-manifolds satisfying (i) and (ii),
from Theorem~\ref{dim4} it follows that there exists $4$-dimensional closed manifolds which are \emph{\texttt{f}}-flexible.
Moreover, taking symplectic products, we arrive at the following existence result:


\begin{theorem}\label{existencia-f-flexible-en-cualquier-dimension}
For each $n\geq 2$, there exist $2n$-dimensional \emph{\texttt{f}}-flexible closed manifolds.
More precisely, there exists a $2n$-dimensional closed manifold $M$ with a continuous family of symplectic forms $\omega_t$
such that the dimensions of the primitive cohomology groups $P H^{2}_{d+d^\Lambda}(M,\omega_t)$ and $P H^{2}_{dd^\Lambda}(M,\omega_t)$
vary with respect to~$t$.
\end{theorem}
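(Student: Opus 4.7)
The plan is to use a product construction over Gompf's h-flexible 4-manifolds. For $n=2$, let $N$ be a closed $4$-manifold equipped with a family $\omega_t$ satisfying conditions (i) and (ii) above, so that $h_3(N,\omega_t)$ varies with $t$ by \cite[Corollary 4.2]{Yan}. The formula
$$
\check{c}^{(1)}_3(N,\omega_t)=b_1(N)+b_2(N)-h_3(N,\omega_t)-1
$$
established in the proof of Theorem~\ref{dim4}, together with the identifications $PH^{2}_{d+d^\Lambda}(N,\omega_t)\cong \check{H}^{3}_{(1)}(N,\omega_t)$ and $PH^{2}_{dd^\Lambda}(N,\omega_t)\cong \check{H}^{2}_{(1)}(N,\omega_t)$ from Remark~\ref{comparison with filtered}, and the duality $\check{c}^{(1)}_2=\check{c}^{(1)}_3$ from Proposition~\ref{igualdades}(ii), immediately yields the conclusion in this case.

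For $n\geq 3$, take $M=N\times T^{2n-4}$ with the family $\tilde\omega_t=\omega_t+\eta$, where $\eta$ is a standard flat symplectic form on the torus. By Remark~\ref{comparison with filtered}, the two primitive cohomology dimensions in question coincide with $\check{c}^{(n-1)}_{2n-1}(M,\tilde\omega_t)$ and $\check{c}^{(n-1)}_{2n-2}(M,\tilde\omega_t)$ respectively, and these are equal by the duality of Proposition~\ref{igualdades}(ii). Combining Remark~\ref{comparison of indexes} with the topological identity $\hat{c}_2(M)=b_2(M)-1$ from Corollary~\ref{n_n-1_coeffective} gives
$$
\check{c}^{(n-1)}_{2n-1}(M,\tilde\omega_t)=b_2(M)-1+b_{2n-1}(M)-\dim L^{n-1}_{\tilde\omega_t}(H^1(M)),
$$
reducing the problem to showing that the rank of $L^{n-1}_{\tilde\omega_t}\colon H^1(M)\to H^{2n-1}(M)$ depends on $t$.

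To control that rank, I would expand $(\omega_t+\eta)^{n-1}$ via the binomial formula and track K\"unneth bidegrees. The decomposition $H^1(M)=H^1(N)\oplus H^1(T^{2n-4})$, together with the dimensional constraints $\dim N=4$ and $\dim T^{2n-4}=2n-4$, force only a single surviving term in the binomial expansion on each factor: the image of $[\alpha_N]\in H^1(N)$ collapses to $(n-1)\,L_{\omega_t}([\alpha_N])\otimes[\eta^{n-2}]\in H^3(N)\otimes H^{2n-4}(T^{2n-4})$, and the image of $[\beta_K]\in H^1(T^{2n-4})$ collapses to $\binom{n-1}{2}\,[\omega_t^2]\otimes L_\eta^{n-3}([\beta_K])\in H^4(N)\otimes H^{2n-5}(T^{2n-4})$. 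These images sit in disjoint K\"unneth summands of $H^{2n-1}(M)$, and since $[\omega_t^2]\neq 0$, $[\eta^{n-2}]\neq 0$, and $L_\eta^{n-3}$ is an isomorphism on $H^1(T^{2n-4})$ by the HLC for the torus, the rank splits as
$$
\dim L^{n-1}_{\tilde\omega_t}(H^1(M))=\dim\bigl(L_{\omega_t}(H^1(N))\bigr)+\binom{n-1}{2}(2n-4).
$$
The second summand is constant in $t$, while the first varies by the Gompf--Yan construction, thereby forcing $\check{c}^{(n-1)}_{2n-1}(M,\tilde\omega_t)$ and hence both primitive cohomology dimensions to vary with $t$.

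The main obstacle is the K\"unneth bookkeeping ensuring that the contributions from $H^1(N)$ and $H^1(T^{2n-4})$ land in distinct K\"unneth components, so that they contribute independently to the rank; otherwise a cross-cancellation between classes of the two factors could prevent the total rank from varying. The dimensional tightness $\dim N=4$ and $\dim T^{2n-4}=2n-4$ selects exactly one value of $j$ in the binomial expansion for each factor, and those two values fall into different bidegrees, which is precisely what transports the Gompf--Yan variation from $N$ to $M$.
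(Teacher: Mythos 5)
Your proposal is correct and follows essentially the paper's route: both arguments reduce the claim to the identity $\check c_{2n-1}^{(n-1)}(M)=b_1(M)+b_2(M)-1-h_{2n-1}(M)$ (equivalently, to tracking $\dim L^{n-1}_{\tilde\omega_t}(H^1(M))$) on a product of a Gompf--Yan flexible $4$-manifold with a K\"ahler factor, the only difference being that the paper invokes the product formula $h_{2n-1}(N_1\times N_2)=h_{2n_1-1}(N_1)+h_{2n_2-1}(N_2)$ from \cite{IRTU1} while you verify the needed instance directly by the K\"unneth bookkeeping for $N\times T^{2n-4}$. The one slip is cosmetic: the torus summand contributes rank $2n-4$, not $\binom{n-1}{2}(2n-4)$, since the nonzero scalar $\binom{n-1}{2}$ does not multiply the dimension; as only the constancy of that summand in $t$ is used, the argument stands.
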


\begin{proof}
Notice first that $\check c_{2n-1}^{(n-1)}(M)=\dim P H^{2}_{d+d^\Lambda}(M)=\dim P H^{2}_{dd^\Lambda}(M)$, and by
Corollary~\ref{rel_armonica_coef-2-cor} we have
$\check{c}_{2n-1}^{(n-1)}(M) = b_2(M) + b_{1}(M) - h_{2n-1}(M) -1$.

On the other hand, by \cite[Proposition 5.3]{IRTU1} we have the following formula for $h_{2n-1}$ of a product
$(M=N_1\times N_2,\ \omega=\omega_1+\omega_2)$ of two
symplectic manifolds $(N_1,\omega_1)$ and $(N_2,\omega_2)$ of respective dimensions $n_1$ and $n_2$:
$$
h_{2n-1}(M)=h_{2n_1-1}(N_1)+h_{2n_2-1}(N_2),
$$
where $n=n_1+n_2$.

Now, let $N_1$ be a 4-dimensional closed manifold such that $h_3$ varies along a continuous family of symplectic forms
and let $N_2$ be, for instance, any compact K\"ahler manifold. Then, on the product manifold $M$ there is a continuous family of symplectic forms
such that $h_{2n-1}(M)$, and so $\check{c}_{2n-1}^{(n-1)}(M)$, varies.
\end{proof}

As we noticed above, there do not exist simply-connected closed 4-manifolds which are \emph{\texttt{f}}-flexible.
By using a recent result by Cho \cite{Cho}, next we prove the existence of flexible simply-connected closed manifolds
in every dimension greater than or equal to six.

\begin{theorem}\label{existencia-f-flexible-en-cualquier-dimension-simply-connected}
For each $n\geq 3$, there exist $2n$-dimensional simply-connected closed manifolds which are \emph{\texttt{f}}-flexible.
More precisely, there exists a $2n$-dimensional simply-connected closed manifold $M$ with a continuous family of symplectic forms $\omega_t$
such that the dimensions of the primitive cohomology groups $P H^{3}_{d+d^\Lambda}(M,\omega_t)$ and $P H^{3}_{dd^\Lambda}(M,\omega_t)$
vary with respect to~$t$. Moreover, the manifold $M$ is homotopy equivalent to some K\"ahler manifold.
\end{theorem}

\begin{proof}
Let us first recall that Cho proves in \cite[Theorem 1.3]{Cho} the existence of a compact K\"ahler manifold $(X,\omega)$
with $\dim_{\mathbb{C}}X=3$ such that

\vskip.15cm

\hskip-.2cm (1) $X$ is simply-connected,

\vskip.15cm

\hskip-.2cm (2) the odd Betti numbers vanish, i.e. $b_{2k+1}(X)=0$ for every integer $k \geq 0$,

\vskip.15cm

\hskip-.2cm (3) $X$ admits a symplectic form $\sigma$ such that $(X,\sigma)$ does not satisfy the HLC, and

\vskip.15cm

\hskip-.2cm (4) $\sigma$ is deformation equivalent to the K\"ahler form $\omega$, that is, there
is a path $\{\omega_t\}_{0 \leq t \leq 1}$ of symplectic

\vskip.05cm

\hskip.35cm forms such that $\omega_0=\omega$ and $\omega_1=\sigma$.

\vskip.15cm

These properties imply that $h_4(X,\omega_t)$ varies with $t$, and the manifold $X$ is \emph{\texttt{h}}-flexible.
One can see that $\hat c_3=0$ and $c_4^{(1)}=b_2-1$ for any symplectic form on $X$, so it is not \emph{\texttt{c}}-flexible,
but $X$ is \emph{\texttt{f}}-flexible because $\check{c}_4^{(1)}=b_2-h_4$.
Hence $X$ provides an example in dimension 6.
Next we consider this manifold to prove that there are simply-connected closed manifolds in every higher dimension
which are \emph{\texttt{f}}-flexible.

Let $M$ be a closed symplectic manifold of dimension $2n$.
From Remark~\ref{comparison with filtered} and Proposition~\ref{igualdades}~(ii) for $k=n-2$ and $q=2n-3$, we notice that
$\dim P H^{3}_{d+d^\Lambda}(M)=\check c_{2n-2}^{(n-2)}(M)=\check c_{2n-3}^{(n-2)}(M)=\dim P H^{3}_{dd^\Lambda}(M)$.
On the other hand, by
Corollary~\ref{rel_armonica_coef-2-cor}
\begin{equation}\label{form-1}
\check{c}_{2n-2}^{(n-2)}(M) = b_{2n-2}(M) + h_3(M) - h_{2n-2}(M) -h_{2n-1}(M).
\end{equation}

Suppose that $M$ is a product of two symplectic manifolds $(N_1,\omega_1)$ and $(N_2,\omega_2)$ of respective dimensions $n_1$ and $n_2$.
By \cite[Proposition 5.3]{IRTU1} we have the following formulas for the harmonic numbers $h_{2n-1}$ and $h_{2n-2}$ of the manifold
$(M=N_1\times N_2,\ \omega=\omega_1+\omega_2)$:
\begin{equation}\label{form-2}
\begin{array}{lll}
&&h_{2n-1}(M)=h_{2n_1-1}(N_1)+h_{2n_2-1}(N_2),\\[3pt]
&&h_{2n-2}(M)=h_{2n_1-2}(N_1)+h_{2n_1-1}(N_1) h_{2n_2-1}(N_2)+h_{2n_2-2}(N_2).
\end{array}
\end{equation}
where $n=n_1+n_2$.

Now, let $N_1=X$ be the 6-dimensional simply-connected closed manifold described above,
and
let $N_2=\mathbb{CP}^{n_2}$ endowed with the standard K\"ahler structure
defined by its
natural complex structure and the Fubini-Study metric.
Hence, the manifold $M$ is a simply-connected closed manifold of dimension $2n=2(3+n_2)\geq 8$.
Since all the odd Betti numbers of $N_1$ and $N_2$ vanish,
the manifold $M$ also has all its odd Betti numbers equal to zero. This implies that
$h_3(M)=0$, $h_{2n-1}(M)=0$ and $h_{2n-2}(M)=h_{4}(X)+1$ by \eqref{form-2},
because $h_{2n_2-2}(N_2)=b_{2n_2-2}(\mathbb{CP}^{n_2})=1$.
Moreover, by K\"unneth formula one arrives at $b_{2n-2}(M)=b_4(X)+1$.
Therefore, the equality \eqref{form-1} reduces to
$$
\check{c}_{2n-2}^{(n-2)}(M) = b_{4}(X) - h_{4}(X).
$$
By the properties of $X$ described above, we conclude that
on the product manifold $M$ there is a continuous family of symplectic forms
such that $\check{c}_{2n-2}^{(n-2)}(M)$ varies.
\end{proof}

From the proofs of
Theorems~\ref{existencia-f-flexible-en-cualquier-dimension} and~\ref{existencia-f-flexible-en-cualquier-dimension-simply-connected},
the resulting symplectic manifolds are also \emph{\texttt{h}}-flexible.
It is unclear if \emph{\texttt{f}}-flexibility is implied by \emph{\texttt{h}}-flexibility
in dimension higher than or equal to 8 (see Proposition~\ref{dim2n} below for the general relation).
In contrast, in six dimensions we have:


\begin{theorem}\label{dim6}
Let $M$ be a $6$-dimensional closed manifold. Then:
\begin{enumerate}
\item[{\rm (i)}] If $M$ is \emph{\texttt{c}}-flexible then $M$ is \emph{\texttt{f}}-flexible and \emph{\texttt{h}}-flexible.
\item[{\rm (ii)}] If $M$ is not \emph{\texttt{c}}-flexible then, $M$ is \emph{\texttt{f}}-flexible if and only if it is \emph{\texttt{h}}-flexible.
\end{enumerate}
\end{theorem}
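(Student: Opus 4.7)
The plan is to reduce, in dimension six ($n=3$), all three notions of flexibility to variation of a very small number of numerical invariants, and then to chase the relations obtained in Sections~\ref{gen-coef-cohom}--\ref{relacion-con-armonica}.

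The first step is to enumerate the possibly non-topological invariants among the generalized coeffective, filtered and harmonic numbers of a closed symplectic six-manifold. By Corollary~\ref{n_n-1_coeffective} (all generalized $n$- and $(n-1)$-coeffective numbers are topological) and Proposition~\ref{igualdades}(i) ($c_q^{(1)}=b_q$ for $q\geq 5$), the only coeffective candidate is $\hat c_3$. Extracting a five-term sequence from \eqref{selarga} with $k=1$, and using Poincar\'e duality together with the surjectivity of $L\colon H^{4}(M)\to H^{6}(M)$ on a closed symplectic manifold, I would obtain
\[
c_4^{(1)}-\hat c_3 \;=\; b_1+b_2-b_3-1,
\]
which is topological; so \texttt{c}-flexibility in dimension six is equivalent to variation of $\hat c_3$ (equivalently, of the rank of $L\colon H^{3}(M)\to H^{5}(M)$). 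An analogous bookkeeping with Lemma~\ref{igualdades-general}(ii), Proposition~\ref{igualdades}(ii) and Corollary~\ref{n_n-1_coeffective} shows that on the filtered side, besides the identity $\check c_5^{(1)}=c_4^{(1)}$, the only possibly non-topological invariants are $\check c_4^{(1)}$ and $\check c_5^{(2)}$. Corollary~\ref{rel_armonica_coef-2-cor} then expresses
\[
\check c_4^{(1)} = b_2+h_3-h_4-h_5,\qquad \check c_5^{(2)}=b_1+b_2-h_5-1,
\]
while on the harmonic side $h_0,h_1,h_2,h_6$ are always topological and Theorem~\ref{rel_armonica_coef} gives $h_3-h_5=\hat c_3$.

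With these reductions in place, part~(i) is immediate: if $M$ is \texttt{c}-flexible then $\hat c_3$ varies, hence $\check c_5^{(1)}=c_4^{(1)}$ varies (giving \texttt{f}-flexibility) and $h_3-h_5$ varies, forcing one of $h_3,h_5$ to vary (giving \texttt{h}-flexibility). For part~(ii) I would assume that $M$ is not \texttt{c}-flexible, so $\hat c_3$, and therefore also $c_4^{(1)}=\check c_5^{(1)}$ and $h_3-h_5$, are constant. The displayed formulas then yield that $\check c_5^{(2)}$ varies iff $h_5$ varies, and (using the constancy of $h_3-h_5$) that $\check c_4^{(1)}$ varies iff $h_4$ varies; so \texttt{f}-flexibility is equivalent to variation of $h_4$ or $h_5$. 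Since the remaining $h_q$'s are topological and $h_3$ varies iff $h_5$ does, the same is true of \texttt{h}-flexibility, and the two notions coincide.

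The main technical obstacle I anticipate is the careful verification that the lists of possibly non-topological generalized coeffective and filtered invariants in dimension six really are as short as claimed, together with the identification $c_4^{(1)}-\hat c_3=b_1+b_2-b_3-1$ that collapses $\hat c_3$, $c_4^{(1)}$ and $\check c_5^{(1)}$ into a single coeffective invariant modulo topology. Once this bookkeeping is in place, everything else follows as a direct numerical consequence of Theorem~\ref{rel_armonica_coef} and Corollary~\ref{rel_armonica_coef-2-cor}.
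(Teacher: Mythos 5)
Your proposal is correct and follows essentially the same route as the paper: reduce everything to the relations $\hat c_3=c_4^{(1)}+b_3-b_2-b_1+1$, $\hat c_3=h_3-h_5$, $\check c_4^{(1)}=b_2+h_3-h_4-h_5$, $\check c_5^{(1)}=c_4^{(1)}$ and $\check c_5^{(2)}=b_1+b_2-h_5-1$, and then chase which invariants can vary. The only cosmetic difference is that you derive the topological relation between $\hat c_3$ and $c_4^{(1)}$ from a five-term sequence of \eqref{selarga} plus Poincar\'e duality, whereas the paper reads it off from the formula for $\chi^{(1)}$ in Proposition~\ref{invtopologicos}; these are equivalent.
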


\begin{proof}
Since $n=3$, the coeffective numbers to be studied are: $c_q^{(1)}$ for $q=4,5,6$, $c_q^{(2)}$ for $q=3,4,5,6$, $c_q^{(3)}$ for $q=2,3,4,5,6$, and $\hat c_1$, $\hat c_2$, $\hat c_3$.
Corollary~\ref{n_n-1_coeffective} implies that $\hat c_1,\hat c_2$ and all $c_{q}^{(k)}$ for $k=2,3$ are topological invariants.
Moreover, by Proposition~\ref{igualdades}~(i) we have $c_{q}^{(1)}=b_q$ for $q=5,6$.
Now, the formula for
$\chi^{(1)}$ given in Proposition~\ref{invtopologicos} implies that
\begin{equation}\label{fund-ec-dim6-1}
\hat c_{3}=c_{4}^{(1)}+b_3-b_2-b_1+1.
\end{equation}
Therefore, $M$ is \texttt{c}-flexible if and only if
$M$ possesses a continuous family of symplectic forms $\omega_t$, $t\in [a,b]$,
such that $\hat c_{3}(M,\omega_a)\not=\hat c_{3}(M,\omega_b)$.

By Proposition~\ref{igualdades-general}~(ii) we have $\check{c}_q^{(k)}=c_{q-2k+1}^{(k)}$ for $k=1,2,3$ and for any $4+k\leq q\leq 5+2k$,
so they
are topological invariants except possibly $\check{c}_5^{(1)}$, which satisfies
\begin{equation}\label{fund-ec-dim6-2}
\check{c}_5^{(1)}=c_{4}^{(1)}.
\end{equation}
By the duality given in Proposition~\ref{igualdades}~(ii), it remains to study $\check{c}_4^{(1)}$ and $\check{c}_5^{(2)}$.

Since $h_q=b_q$ for $q=0,1,2,6$,
the equalities in Corollary~\ref{rel_armonica_coef-2-cor} together with
Theorem~\ref{rel_armonica_coef} imply the following relations
\begin{equation}\label{fund-ec-dim6-3-4-5}
\hat c_{3}=h_3-h_5,\quad\ \
\hat c_{3}=\check{c}_4^{(1)}+h_4-b_2,\quad\ \
\check{c}_5^{(2)}=-h_5+b_2+b_1-1.
\end{equation}

Therefore, the fundamental equalities that relate the different cohomologies for closed
6-dimensional manifolds are \eqref{fund-ec-dim6-1}--\eqref{fund-ec-dim6-3-4-5}.
Now, using these relations, a direct argument shows (i) and (ii).
\end{proof}

\begin{corollary}\label{dim6-cor}
A $6$-dimensional closed manifold is \emph{\texttt{f}}-flexible if and only if it is \emph{\texttt{h}}-flexible.
\end{corollary}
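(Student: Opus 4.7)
The plan is to derive the corollary as an immediate logical consequence of Theorem~\ref{dim6}, via a two-case split on whether $M$ is \texttt{c}-flexible. All the substantive work — in particular the fundamental relations \eqref{fund-ec-dim6-1}--\eqref{fund-ec-dim6-3-4-5} linking the coeffective, filtered, and symplectically harmonic numbers in dimension six — has already been carried out in the proof of Theorem~\ref{dim6}, so no additional computation will be required.

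First I would consider the case in which $M$ is \texttt{c}-flexible. By Theorem~\ref{dim6}(i), this already forces $M$ to be simultaneously \texttt{f}-flexible and \texttt{h}-flexible, so both sides of the claimed equivalence are true and the biconditional holds trivially. In the complementary case in which $M$ is not \texttt{c}-flexible, Theorem~\ref{dim6}(ii) states exactly that $M$ is \texttt{f}-flexible if and only if it is \texttt{h}-flexible, which is precisely the conclusion of the corollary. Combining the two cases yields the equivalence unconditionally.

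Since the statement is a direct corollary of a theorem whose proof already isolates the two regimes \texttt{c}-flexible versus non-\texttt{c}-flexible, there is no real obstacle; the only point worth emphasizing in writing is that the \texttt{c}-flexible case is not a counterexample but in fact a strengthening, because Theorem~\ref{dim6}(i) produces \emph{both} \texttt{f}- and \texttt{h}-flexibility from \texttt{c}-flexibility. Thus the logical picture in dimension six is that \texttt{c}-flexibility is strictly stronger than the other two, while \texttt{f}-flexibility and \texttt{h}-flexibility coincide.
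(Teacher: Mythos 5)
Your proposal is correct and matches the paper's (implicit) argument: the corollary is stated as an immediate consequence of Theorem~\ref{dim6}, obtained exactly by the two-case split on \texttt{c}-flexibility, with case (i) making both sides of the biconditional true and case (ii) giving the biconditional directly. Nothing further is needed.
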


\begin{remark}\label{no-reciproco}
{\rm
Notice that there exist closed 6-dimensional manifolds which are \emph{\texttt{f}}-flexible and \emph{\texttt{h}}-flexible,
but not \emph{\texttt{c}}-flexible; that is to say, the converse to (i) in Theorem~\ref{dim6}
does not hold in general. Explicit examples of nilmanifolds satisfying this are
given in Section~\ref{sec_ex}.
}
\end{remark}

In higher dimension we have the following result:


\begin{proposition}\label{dim2n}
Let $M$ be a closed manifold of dimension $2n\geq 8$.
If $M$ is \emph{\texttt{c}}-flexible then $M$ is \emph{\texttt{f}}-flexible or \emph{\texttt{h}}-flexible.
\end{proposition}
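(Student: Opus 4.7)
The plan is to identify precisely which generalized coeffective numbers can fail to be topological invariants, and then for each such number translate any variation along the family $\omega_t$ directly into the variation of a filtered or of a symplectically harmonic number, using the fundamental identities established in Sections~\ref{exten-gen-coef-cohom} and~\ref{relacion-con-armonica}.

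First, I would determine which generalized coeffective numbers are automatically constant in $t$. By Corollary~\ref{n_n-1_coeffective} we have $\hat c_1(M)=b_1(M)$ and $\hat c_2(M)=b_2(M)-1$; by Lemma~\ref{igualdades-general}~(i), $c_q^{(n)}(M)=b_q(M)$ for $2\leq q\leq 2n$; and by Proposition~\ref{igualdades}~(i), $c_q^{(k)}(M)=b_q(M)$ whenever $2n-2k+1\leq q\leq 2n$. In particular, all $c_q^{(n)}$ and $c_q^{(n-1)}$ are topological. Therefore, \texttt{c}-flexibility of $M$ forces one of the following to vary with $t$: either $\hat c_{n-k+1}(M,\omega_t)$ for some $1\leq k\leq n-2$, or $c_q^{(k)}(M,\omega_t)$ for some $1\leq k\leq n-2$ with $n-k+2\leq q\leq 2n-2k$. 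Note that the hypothesis $2n\geq 8$ is exactly what guarantees that this range of indices $k$ is non-empty.

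Next I would split the argument into the two corresponding cases. In the first case, some $\hat c_{n-k+1}(M,\omega_t)$ varies with $t$. Applying Theorem~\ref{rel_armonica_coef} pointwise in $t$ gives
\[
\hat c_{n-k+1}(M,\omega_t) = h_{n-k+1}(M,\omega_t) - h_{n+k+1}(M,\omega_t),
\]
so if the left-hand side varies, then at least one of $h_{n-k+1}(M,\omega_t)$ or $h_{n+k+1}(M,\omega_t)$ must vary, and hence $M$ is \texttt{h}-flexible. In the second case, some $c_q^{(k)}(M,\omega_t)$ with $n-k+2\leq q\leq 2n-2k$ varies. Writing $q'=q+2k-1$, one checks that $n+k+1\leq q'\leq 2n+2k-1$, so Lemma~\ref{igualdades-general}~(ii) yields
\[
c_q^{(k)}(M,\omega_t)=\check c_{q+2k-1}^{(k)}(M,\omega_t).
\]
Thus the filtered number $\check c_{q+2k-1}^{(k)}(M,\omega_t)$ also varies, so $M$ is \texttt{f}-flexible. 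In either case the conclusion of the proposition holds.

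The argument is essentially a bookkeeping exercise: the hard part is isolating the coeffective numbers not already forced to be topological by Corollary~\ref{n_n-1_coeffective} and Proposition~\ref{igualdades}~(i), and then recognizing that each of the remaining ones is, by virtue of either Theorem~\ref{rel_armonica_coef} or Lemma~\ref{igualdades-general}~(ii), an explicit combination of filtered or harmonic numbers. No new analytic input is needed beyond the identities already established.
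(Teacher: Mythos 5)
Your proposal is correct and follows essentially the same two-case argument as the paper: a variation of some $c_q^{(k)}$ yields \texttt{f}-flexibility via Lemma~\ref{igualdades-general}~(ii), and a variation of some $\hat c_{n-k+1}$ yields \texttt{h}-flexibility via Theorem~\ref{rel_armonica_coef}. The preliminary step of discarding the topologically determined numbers is harmless but unnecessary (the two identities apply to the full index ranges), and your aside that $2n\geq 8$ is ``exactly'' what makes the index range non-empty is not quite right --- the implication would hold vacuously otherwise, and the restriction to $2n\geq 8$ merely reflects that dimensions $4$ and $6$ are covered by the sharper Theorems~\ref{dim4} and~\ref{dim6}.
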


\begin{proof}
If $M$ possesses a continuous family of symplectic forms $\omega_t$, $t\in [a,b]$,
such that $c_{q}^{(k)}(M,\omega_a)\not=c_{q}^{(k)}(M,\omega_b)$ for some
$1\leq k\leq n$ and $n-k+2\leq q\leq 2n$, then it is clear that $M$ is \emph{\texttt{f}}-flexible
by Proposition~\ref{igualdades-general}~(ii).

If $M$ possesses a continuous family of symplectic forms $\omega_t$, $t\in [a,b]$,
such that
$\hat c_{n-k+1}(M,\omega_a)\not=\hat c_{n-k+1}(M,\omega_b)$ for some
$1\leq k\leq n$, then Theorem~\ref{rel_armonica_coef} implies
that
$h_{n-k+1}(M,\omega_a)\not=h_{n-k+1}(M,\omega_b)$
or $h_{n+k+1}(M,\omega_a)\not=h_{n+k+1}(M,\omega_b)$,
therefore $M$ is \emph{\texttt{h}}-flexible.
\end{proof}

\section{Symplectic 6-dimensional nilmanifolds}\label{sec_ex}

\noindent In this section we present a complete study of the dimensions of the harmonic, coeffective and filtered cohomology groups
of 6-dimensional symplectic nilmanifolds, see Table~1 below.

The symplectically harmonic numbers $h_4$ and $h_5$ were first computed in~\cite{IRTU1},
whereas $h_3$ was obtained in~\cite{IRTU2} (see Remark~\ref{mistakes} for corrections).
As a consequence of our study, we describe
all \emph{\texttt{c}}-flexible, \emph{\texttt{h}}-flexible or \emph{\texttt{f}}-flexible
6-dimensional nilmanifolds.

In the proof of Theorem~\ref{dim6} we found that for 6-dimensional closed symplectic
manifolds the fundamental
equalities that relate the different cohomologies
are \eqref{fund-ec-dim6-1}--\eqref{fund-ec-dim6-3-4-5}.
In addition, since the Euler
characteristic of a nilmanifold vanishes, we have that
$b_3=2(b_2-b_1+1)$. Therefore, relations \eqref{fund-ec-dim6-1}--\eqref{fund-ec-dim6-3-4-5}
for 6-dimensional symplectic nilmanifolds are
\begin{equation}\label{fund-ec-dim6-nilv}
\hat c_{3}=c_{4}^{(1)}\!+\!b_2\!-\!3 b_1\!+\!3,\ \ \
h_3=\hat c_{3}\!+\!h_5,\ \ \
\check{c}_4^{(1)}=\hat c_{3}\!-\!h_4+b_2,\ \ \
\check{c}_5^{(1)}=c_{4}^{(1)},\ \ \
\check{c}_5^{(2)}=-h_5\!+\!b_2\!+\!b_1\!-\!1.
\end{equation}
Recall that $c_{4}^{(1)}$, $\check{c}_4^{(1)}$ and $\check{c}_5^{(2)}$ are by Remark~\ref{comparison with filtered} dimensions
of primitive cohomology groups; concretely,
$c_{4}^{(1)}=\dim PH^2_{\partial_-}(=\dim PH^2_{\partial_+})$,
$\check{c}_4^{(1)}=\dim PH^3_{d+d^\Lambda}(=\dim PH^3_{d d^\Lambda})$ and
$\check{c}_5^{(2)}=\dim PH^2_{d+d^\Lambda}(=\dim PH^2_{d d^\Lambda})$.


It follows from Propositions~\ref{calculo-coef-solvariedades} and~\ref{calculo-filtered-solvariedades}
that the calculation of all the cohomology groups reduces to the Lie algebra level.
In Table~1 nilmanifolds of dimension 6 admitting symplectic structure appear lexicographically with respect to
the triple $(b_1,b_2,6-s)$, where $b_1$ and $b_2$ are the Betti numbers
(first two columns in the table) and $s$ is the step length (third column).
The fourth column contains the description of the structure of the
nilmanifold;
for instance, the notation $(0,0,12,13,14,15)$ means that there exists a basis
$\{e^i\}_{i=1}^6$ of (invariant) 1-forms such that
$$
d e^1=d e^2=0,\quad d e^3=e^1 \wedge e^2,\quad
d e^4=e^1 \wedge e^3,\quad
d e^5=e^1 \wedge e^4,\quad d e^6=e^1 \wedge e^5.
$$
%
%
\par The next columns show the dimensions of the non-trivial harmonic, coeffective and filtered cohomology groups, that is,
$h_k$ $(k=3,4,5)$, $\hat c_3$, $c_4^{(1)}(=\check{c}_5^{(1)})$, $\check c_4^{(1)}$ and $\check c_5^{(2)}$.
Moreover, the columns contain all the
possible values when $\omega$ runs
over the space $\SSS$ of all invariant symplectic structures on the nilmanifold.

The last column shows the dimension of the space $\SSS$, however the cohomology groups only depend on the cohomology class
of the symplectic form. This fact allows to reduce calculations to a smaller number of parameters,
and furthermore
by Remark~\ref{coef-inv-cohom-class} we can always normalize one of the non-zero coefficients
that parametrize the classes of the symplectic forms.

When there are variations in the dimensions of the cohomology groups, they appear in the table written accordingly
to the lower-semicontinuous property \eqref{lower-semicontinuous} of the harmonic numbers $h_4$ and $h_5$, or to the
upper-semicontinuous property of $\hat c_3$, $c_4^{(1)}$, $\check c_4^{(1)}$ and $\check c_5^{(2)}$
(see Proposition~\ref{upper-semicontinuous}). Notice that the harmonic number $h_3$ does not satisfy any
lower or upper semicontinuous property.
Moreover, the variations in the dimensions of the cohomology groups are in correspondence (in the sense that
we explain in Example~\ref{h24} below),
except for the nilmanifolds $(0,0,0,12,13,23)$ and $(0,0,0,12,13,14+23)$
(see Example~\ref{ex_h11} for details on the latter).



%

\vskip.3cm

\begin{center} 
\renewcommand{\arraystretch}{1.25}
{\footnotesize
\def\no{\hb{\bf--}}
{
\begin{tabular}{|c|c|c|c|c|c|c|c|c|c|c|c|}
\hline $b_1$&$b_2$&$6\!-\!s$&Structure& $h_3$&
$h_4$&$h_5$ & $\hat c_3$&$c^{(1)}_4$&$\check c^{(1)}_4$&$\check c^{(2)}_5$&$\dim \SSS$\\
\hline
2&3&1&(0,0,12,13,14,15)&3&2&0& 3&3&4 &4 & 7\\
2&3&1&(0,0,12,13,14+23,24+15)&4,3 &2&0&4,3&4,3&5,4&4 & 7\\
2&3&1&(0,0,12,13,14,23+15)&3&2&0& 3&3&4& 4 &7\\
2&4&2&(0,0,12,13,23,14)&4 &4&0& 4&3&4&5 &8\\
2&4&2&(0,0,12,13,23,14-25)&4 &2,3,4&0& 4&3&6,5,4& 5 &8\\
2&4&2&(0,0,12,13,23,14+25)&4 &4&0& 4&3&4& 5 &8\\
\hline
3&4&2&(0,0,0,12,14-23,15+34)& 2 &2&0& 2&4&4& 6 &7\\
3&5&2&(0,0,0,12,14,15+23)&5&4&2&3&4& 4& 5 &8\\
3&5&2&(0,0,0,12,14,15+23+24)&4,5 &3,4&0,2& 4,3&5,4&6,4& 7,5 &8\\
3&5&2&(0,0,0,12,14,15+24)&5&4 &2 & 3&4&4&5 & 8\\
3&5&2&(0,0,0,12,14,15)&5&4&2 &3&4& 4& 5 &8\\
3&5&3&(0,0,0,12,13+42,14+23)&5 &3 &0 & 5&6&7& 7 &8\\
3&5&3&(0,0,0,12,14,13+42)&5 &3 &0 &5&6&7& 7 &8\\
3&5&3&(0,0,0,12,13+14,24)&5 &2,3 &0 &5& 6&8,7&7 &8\\
3&6&3&(0,0,0,12,13,14+23)&7,6,5 &3,4 &0 & 7,6,5&7,6,5&9,8,7& 8 &9\\
3&6&3&(0,0,0,12,13,24)&6,5 &5 &0 & 6,5&6,5&7,6&8 & 9\\
3&6&3&(0,0,0,12,13,14)&6,5 &4&0 &6,5&6,5&8,7&8 & 9\\
3&8&4&(0,0,0,12,13,23)&10,9&7,8&0 &10,9&8,7&10&10 & 9\\
\hline
4&7&3&(0,0,0,0,12,15)&6&3&2& 4&6&8&8 & 9\\
4&7&3&(0,0,0,0,12,14+25)&7,6&4&2&5,4&7,6&8,7& 8 &9\\
4&8&4&(0,0,0,0,13+42,14+23)& 8&7&2& 6&7&7& 9 &10\\
4&8&4&(0,0,0,0,12,14+23)& 8&6 &2 &6& 7&8& 9 &10\\
4&8&4&(0,0,0,0,12,34)& 8&7 &2  & 6&7&7& 9 &10\\
4&9&4&(0,0,0,0,12,13)& 10&7,8 &2 & 8&8&10,9& 10 &11\\
\hline
5&11&4&(0,0,0,0,0,12)&13 & 9 &4 & 9&10&11& 11 &12\\
\hline
6&15&5&(0,0,0,0,0,0)& 20 & 15 &6& 14&14&14& 14 &15\\
\hline
\end{tabular}}
}

\vskip.15cm

{\bf Table 1.} Symplectic invariants of six-dimensional nilmanifolds

\end{center}

The following result is a direct consequence of Table 1.

\begin{theorem}\label{flexdim6clasif}
\
\begin{enumerate}
\item[{\rm (i)}] There exist seven nilmanifolds of dimension~$6$ that are \emph{\texttt{c}}-flexible (and
therefore \emph{\texttt{f}}-flexible and \emph{\texttt{h}}-flexible).
\item[{\rm (ii)}] There exist three nilmanifolds of dimension~$6$ that are \emph{\texttt{f}}-flexible and \emph{\texttt{h}}-flexible,
but not \emph{\texttt{c}}-flexible.
\end{enumerate}
In conclusion, there exist ten $6$-dimensional nilmanifolds that are \emph{\texttt{f}}-flexible and \emph{\texttt{h}}-flexible.
\end{theorem}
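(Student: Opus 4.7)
The plan is to reduce the statement to a finite, explicit verification on the list of six-dimensional nilpotent Lie algebras admitting a symplectic structure, and then to read off the three flavors of flexibility from the fundamental identities \eqref{fund-ec-dim6-nilv} together with Theorem~\ref{dim6} and Corollary~\ref{dim6-cor}. First, I would recall that by Propositions~\ref{calculo-coef-solvariedades} and~\ref{calculo-filtered-solvariedades}, and the analogous statement for harmonic cohomology cited after Corollary~\ref{rel_armonica_coef-2-cor}, every cohomology appearing in Definition~\ref{flexible} can be computed on the Chevalley--Eilenberg complex $\bigwedge^{*}\frg^{*}$. Thus the question of flexibility is reduced to studying how the relevant dimensions vary as $\omega$ ranges over the space $\SSS$ of invariant symplectic structures on each of Salamon's list of six-dimensional nilpotent Lie algebras.

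Next, I would invoke Remark~\ref{coef-inv-cohom-class} and Lemmas~\ref{iso-equiv-coef} and~\ref{iso-equiv-filtered} to reduce the parameter space to $\mathbb{P}(H^{2}(\frg))$, since all the cohomologies of interest depend only on the projective class $[[\omega]]$. This shrinks the parametrization of $\SSS$ enough that each case becomes a concrete finite-parameter linear-algebra computation: compute $b_{1},b_{2}$ and the Lefschetz map $L\colon H^{1}(\frg)\to H^{3}(\frg)$, then use Theorem~\ref{harm} to get $h_{3}, h_{4}, h_{5}$ as functions of the coefficients of $\omega$, and read off $\hat c_{3}$, $c_{4}^{(1)}$, $\check c_{4}^{(1)}$ and $\check c_{5}^{(2)}$ from the relations \eqref{fund-ec-dim6-nilv} (all the other generalized coeffective and filtered numbers being topological invariants by Corollary~\ref{n_n-1_coeffective} and Lemma~\ref{igualdades-general}). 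The upshot is Table~1, which records all possible values of these seven numbers over $\SSS$.

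Once Table~1 is in hand, the theorem is a matter of inspection. By Theorem~\ref{dim6}(i) a six-dimensional closed manifold is \texttt{c}-flexible if and only if $\hat c_{3}$ varies, which by the first equation in \eqref{fund-ec-dim6-nilv} is equivalent to $c_{4}^{(1)}$ varying (since $b_{1},b_{2}$ are topological); a direct scan of the table shows this happens for exactly seven nilmanifolds, proving (i). For (ii), Corollary~\ref{dim6-cor} tells us \texttt{f}-flexibility is equivalent to \texttt{h}-flexibility, and by the relations \eqref{fund-ec-dim6-nilv} these occur whenever $h_{3}, h_{4}, h_{5}$ or equivalently $\check c_{4}^{(1)}, \check c_{5}^{(2)}$ vary; among the nilmanifolds for which $\hat c_{3}$ is constant, precisely three present such variation in $h_{4}$ or $h_{5}$, yielding (ii) and thus the total count of ten.

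The main obstacle will be the case-by-case computation underlying Table~1, specifically for the two nilmanifolds $(0,0,0,12,13,23)$ and $(0,0,0,12,13,14+23)$ flagged in Example~\ref{ex_h11}, where the correspondence between variations in the harmonic and coeffective numbers is more subtle: here one must carefully track which strata of $\mathbb{P}(H^{2}(\frg))$ give jumps in $h_{4}$ versus jumps in $h_{5}$ or $\hat c_{3}$, rather than simply dualizing. Once those two cases are handled explicitly (parametrizing $[\omega]$ by its non-normalizable coefficients and computing the rank of $L^{k}$ on each stratum), the remaining nilmanifolds are routine, and the two counts in the theorem follow.
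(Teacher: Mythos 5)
Your proposal is correct and follows essentially the same route as the paper: the paper's proof is literally ``a direct consequence of Table~1,'' and Table~1 is assembled exactly as you describe, by reducing to the Chevalley--Eilenberg level, normalizing the class $[[\omega]]\in\mathbb{P}(H^2)$, computing $h_3,h_4,h_5$ via Theorem~\ref{harm}, and deducing the remaining numbers from \eqref{fund-ec-dim6-nilv}. Your counts (seven with varying $\hat c_3$, three more with varying $h_4$ but constant $\hat c_3$) match the table; the only quibble is that the equivalence ``\texttt{c}-flexible iff $\hat c_3$ varies'' comes from the proof of Theorem~\ref{dim6} rather than from its statement~(i).
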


Notice that from (ii) it follows that the converse of Theorem~\ref{dim6} does not hold, that is, \emph{\texttt{c}}-flexibility
is the strongest condition.

\begin{remark}\label{mistakes}
{\rm
In \cite{IRTU1,IRTU2} the following symplectically harmonic numbers need correction:

$\bullet$ for $(0,0,12,13,14,15)$ the number $h_4$ is equal to 2 (not 3);

$\bullet$ for $(0,0,12,13,14,23+15)$ the number $h_3$ is equal to 3 (not 2);

$\bullet$ for $(0,0,0,12,14,15+23)$ the number $h_3$ is equal to 5 (not 4);

$\bullet$ for $(0,0,0,0,12,14+25)$ the number $h_4$ is equal to 4 (not 3).
}
\end{remark}

\begin{example}\label{h24}
{\rm
Let us consider the $6$-dimensional nilmanifold $(0,0,0,12,14,15+23+24)$.
According to Table~1, this manifold is \emph{\texttt{c}}-flexible, \emph{\texttt{f}}-flexible and \emph{\texttt{h}}-flexible.  In fact, consider the following
continuous family of symplectic structures
$$[\omega_t]=(1-\cos t)[e^{13}] - \cos t\,[e^{16}+e^{25}-e^{34}]+(1-\cos t)[e^{26}-e^{45}],\quad t\in \mathbb{R}.$$
This family was constructed first in~\cite{IRTU1,IRTU2} to show \emph{\texttt{h}}-flexibility, and the symplectic structures
$\omega_{t=0}$ and $\omega_{t=\frac\pi 2}$ were considered in~\cite{TY2} concerning the dimension of the
primitive group $PH^2_{\partial_-}$, i.e.
$c_4^{(1)}$.

This 6-dimensional nilmanifold is the only one where all the non-trivial coeffective, harmonic and primitive
numbers vary. In Table 1 the variations are in correspondence as follows:

\smallskip

\noindent $\bullet$ $h_3(\omega_{2\pi k})=4$,\, $h_4(\omega_{2\pi k})=3$,\, $h_5(\omega_{2\pi k})=0$,\, $\hat c_3(\omega_{2\pi k})=4$,\,
$c_4^{(1)}(\omega_{2\pi k})=\check c_5^{(1)}(\omega_{2\pi k})=5$,\, $\check c_4^{(1)}(\omega_{2\pi k})=6$,

\smallskip

\hskip-.1cm  $\check c_5^{(2)}(\omega_{2\pi k})=7$,\ for any integer $k$;

\smallskip

\noindent $\bullet$ $h_3(\omega_{t})=5$,\, $h_4(\omega_{t})=4$,\, $h_5(\omega_{t})=2$,\, $\hat c_3(\omega_{t})=3$,\,
$c_4^{(1)}(\omega_{t})=\check c_5^{(1)}(\omega_{t})=4$,\, $\check c_4^{(1)}(\omega_{t})=4$,\, $\check c_5^{(2)}(\omega_{t})=5$,

\smallskip

\hskip-.1cm  for $t\not= 2\pi k$.
}
\end{example}

\begin{example}\label{ex_h11}
{\rm
Let us consider the $6$-dimensional nilmanifold $(0,0,0,12,13,14+23)$.
The de Rham class of any symplectic form is given by
$$[\omega]=A\,[e^{14}] + B\,[e^{15}] +C\,[e^{24}] +D\,[e^{35}] +E\,[e^{16}+e^{25}] +F\,[e^{16}-e^{34}],$$
where $(E+F)(CD+EF)\neq 0$.
Direct computations show that $\hat c_3$ and $h_4$ vary as follows:
$$\hat c_3=\begin{cases}7,\,\, \mbox{ if } D=E+2F=0,\\ 6,\,\,\mbox{ if } D=0,\, E+2F\neq 0,\\ 5,\,\,\mbox{ if } D\neq 0, \end{cases}
\quad\quad
h_4=\begin{cases}3,\,\,\mbox{ if }(E+F)^2=CD+EF,\\ 4,\,\,\mbox{ if }(E+F)^2\neq CD+EF. \end{cases}$$
This nilmanifold satisfies $b_2=3b_1-3$, so \eqref{fund-ec-dim6-nilv} implies that $\check c_5^{(1)}=c_4^{(1)}=\hat c_3$.
Moreover, $h_5=0$ from which we get that $\check c_5^{(2)}=8$ and $h_3=\hat c_3$. Hence,
using that $\check c_4^{(1)} = b_2-h_4+ \hat c_3$ by \eqref{fund-ec-dim6-nilv}, we arrive at
$$\check c_4^{(1)}=\begin{cases}
9,\,\, \mbox{ if } D=E+2F=0,\\
8,\,\, \mbox{ if } D=0,\, E+2F\neq 0,\ \mbox{ or }\ D\neq 0, \,(E+F)^2=CD+EF,\\
7,\,\, \mbox{ if } D\neq 0,\, (E+F)^2\neq CD+EF. \end{cases}$$

As a consequence, concrete families can be constructed. Let us consider the
two-parametric family
$$[\omega_{t,s}]=t\,[e^{35}] +(s+2)\,[e^{16}+e^{25}] -\,[e^{16}-e^{34}],$$
where $t,s\geq 0$. Then, the variations of the dimensions are:
\smallskip

\noindent $\bullet$ $h_3(\omega_{0,0})=\hat c_3(\omega_{0,0})=c_4^{(1)}(\omega_{0,0})=\check c_5^{(1)}(\omega_{0,0})=7$ and
$\check c_4^{(1)}(\omega_{0,0})=9$;

\smallskip

\noindent $\bullet$ $h_3(\omega_{t,0})=\hat c_3(\omega_{t,0})=c_4^{(1)}(\omega_{t,0})=\check c_5^{(1)}(\omega_{t,0})=5$ and
$\check c_4^{(1)}(\omega_{t,0})=7$, for $t>0$;

\smallskip

\noindent $\bullet$ $h_3(\omega_{0,s})=\hat c_3(\omega_{0,s})=c_4^{(1)}(\omega_{0,s})=\check c_5^{(1)}(\omega_{0,s})=6$ and
$\check c_4^{(1)}(\omega_{0,s})=8$, for $s>0$.

\medskip

On the other hand, if we consider the family
$$[\omega_{t}]=[e^{24}] +[e^{35}] +t\,[e^{16}+e^{25}] +[e^{16}-e^{34}],$$
where $t\geq 0$, then the variations are:
\smallskip

\noindent $\bullet$ $h_4(\omega_{0})=3$ and $\check c_4^{(1)}(\omega_{0})=8$;

\smallskip

\noindent $\bullet$ $h_4(\omega_{t})=4$ and $\check c_4^{(1)}(\omega_{t})=7$, for $t>0$.
}
\end{example}

\medskip

For compact K\"ahler manifolds $\M$ and for every $q\geq n+1$,
the coeffective cohomology group $H^q_{(1)}(M)$ is isomorphic to the \emph{$[\omega]$-truncated $q$-th de Rham group}
$\widetilde{H}^q_{[\omega]}(M)=\{[\alpha]\in H^q(M) \mid [\alpha]\cup[\omega]=0\}$,
although Fern\'andez, Ib\'a\~nez and de Le\'on showed that this is no longer true for arbitrary compact symplectic manifolds~\cite{FIL1}.
Kasuya has studied in \cite{Kasuya} certain symplectic aspherical (non-K\"ahler) manifolds for which
\begin{equation}\label{isotrun}
H^q_{(1)}(M)\cong \widetilde{H}^q_{[\omega]}(M) \quad \mbox{ for every } q\geq n+1.
\end{equation}
For 6-dimensional symplectic nilmanifolds one has the following result, which suggests that such isomorphism
might be closely related to a low step of nilpotency:

\begin{proposition}\label{isotruncada}
Let $M$ be a symplectic $s$-step nilmanifold of dimension~$6$.
\begin{enumerate}
\item[{\rm (i)}] If $s\leq 2$, then there exists a symplectic form on $M$ satisfying $\eqref{isotrun}$.
\item[{\rm (ii)}] If $s=5$ then $\eqref{isotrun}$ is never satisfied.
\end{enumerate}
\end{proposition}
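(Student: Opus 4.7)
The plan is to reduce \eqref{isotrun} to a single numerical criterion that can be read off directly from Table~1.

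First, I would use the long exact sequence \eqref{selarga} with $k=1$ to extract, for every $q\ge n+1$, the five-term exact sequence
$$0\longrightarrow \coker\bigl(L\colon H^{q-1}(M)\to H^{q+1}(M)\bigr)\longrightarrow H^q_{(1)}(M)\longrightarrow \widetilde H^q_{[\omega]}(M)\longrightarrow 0,$$
which tells us that $H^q_{(1)}(M)\cong \widetilde H^q_{[\omega]}(M)$ iff $L\colon H^{q-1}(M)\to H^{q+1}(M)$ is surjective. For a closed $6$-manifold ($n=3$) and $q\ge 5$ this is automatic, since $H^6(M)=\langle[\omega^3]\rangle$ and $H^7(M)=0$. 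Hence the isomorphism \eqref{isotrun} collapses to the single condition that $L\colon H^3(M)\to H^5(M)$ be surjective. Taking dimensions in the $q=4$ piece of the sequence and using that $\dim\ker(L\colon H^4\to H^6)=b_2(M)-1$ on a closed symplectic $6$-manifold, this is in turn equivalent to
$$c_4^{(1)}(M)=b_2(M)-1.$$
By Remark~\ref{coef-inv-cohom-class} and Proposition~\ref{calculo-coef-solvariedades}, the coeffective number $c_4^{(1)}(M)$ depends only on the de Rham class of $\omega$ and coincides with its Lie-algebra counterpart computed from any invariant symplectic representative; such a representative exists on any symplectic nilmanifold, since the cohomology class of a symplectic form has an invariant representative by Nomizu and the top class is preserved. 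So it suffices to test the criterion on invariant forms, which is precisely what Table~1 records.

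Having made the reduction, I would dispatch both parts by inspecting Table~1. For part (i), the case $s=1$ is the torus, for which \eqref{isotrun} follows from K\"ahlerness; for $s=2$, each of the five entries with $6-s=4$ satisfies $c_4^{(1)}=b_2-1$ (for instance $b_2=9$, $c_4^{(1)}=8$ on $(0,0,0,0,12,13)$, and analogously in the remaining four cases), and since these values are constant over $\SSS$, the criterion holds for every invariant, hence every, symplectic form. For part (ii), the three entries with $6-s=1$ all have $b_2=3$ and $c_4^{(1)}\in\{3,4\}$, so $c_4^{(1)}(M)>b_2(M)-1=2$ for every invariant symplectic form, whence \eqref{isotrun} fails identically.

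The decisive step is the cohomological reduction to the criterion $c_4^{(1)}=b_2-1$; once in place, Table~1 finishes both parts mechanically. A more structural alternative for (ii) would exploit Poincar\'e duality to restate surjectivity of $L\colon H^3\to H^5$ as injectivity of $L\colon H^1(M)\to H^3(M)$, and then observe that every $6$-dimensional step-$5$ nilpotent Lie algebra is filiform, so that the closedness constraints on any invariant $2$-form force $e^1\wedge\omega$ to lie in the exact $3$-forms; thus $L([e^1])=0$ for every symplectic $\omega$, confirming that the criterion is always violated.
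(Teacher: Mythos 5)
Your reduction is exactly the paper's: since $n=3$, the isomorphism \eqref{isotrun} is automatic for $q=5,6$ and for $q=4$ it is equivalent to $c_4^{(1)}(M)=\dim\widetilde H^4_{[\omega]}(M)=b_2(M)-1$ (the paper states this more tersely, using $L(H^4(M))=H^6(M)$), after which both parts are read off Table~1; your extra detail on the five-term sequence and on passing to invariant representatives via Proposition~\ref{calculo-coef-solvariedades} is a correct elaboration of what the paper leaves implicit.

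One small slip in your verification of (i): there are six, not five, entries with $6-s=4$, and for the nilmanifold $(0,0,0,12,13,23)$ the value $c_4^{(1)}$ is \emph{not} constant over $\SSS$ --- it takes the values $8$ and $7$ while $b_2-1=7$ --- so your claim that the criterion holds for \emph{every} symplectic form on each $2$-step nilmanifold is false for that entry. Since statement (i) only asserts the existence of one symplectic form satisfying \eqref{isotrun}, the value $c_4^{(1)}=7$ still suffices and the conclusion stands; but the argument as written proves more than is true. The rest, including the reading of the three $s=5$ entries in (ii), is correct.
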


\begin{proof}
Since $n=3$ we only need to consider $q=4$. Thus, $\eqref{isotrun}$ holds if and only
if $c_4^{(1)}(M)=\dim \widetilde{H}^4_{[\omega]}(M)=b_2(M)-1$, the latter equality coming from the fact that
$L(H^4(M))=H^6(M)$. Now, the result is a direct consequence of Table 1.
\end{proof}

Notice that there are several 3-step and several 4-step symplectic nilmanifolds of dimension $6$
satisfying~\eqref{isotrun}. In fact,
all the cases in Table 1 where $c_4^{(1)}=b_2-1$ have such property.

\begin{remark}
{\rm
Concerning other (non primitive) cohomology groups, we recall that
in \cite[Table 3]{Angella-K} the dimensions of the symplectic Bott-Chern and Aeppli
cohomologies for a particular choice of symplectic structure on each 6-dimensional nilmanifold
have been computed.
}
\end{remark}

\section{A symplectic $8$-dimensional solvmanifold}\label{8-dim-solvmanifold}

\noindent
In this section we show a closed manifold of dimension 8 that is \emph{\texttt{c}}-flexible, \emph{\texttt{f}}-flexible and \emph{\texttt{h}}-flexible.  

Let us consider the $8$-dimensional compact solvmanifold $M=S/\Gamma$, where $S$ is a simply connected completely solvable Lie group of dimension $8$ defined by left-invariant $1$-forms $\{e^i,\, 1\leq i \leq 8\}$ such that
$$de^1=de^2=de^3=0,\quad de^4=-e^{12},\quad de^5=-e^{13},\quad de^6=-e^{14},\quad de^7=-e^{17},\quad de^8=e^{18}.$$
Using \cite{Hattori}, the Betti numbers of $M$ are
$b_1(M)=3$, $b_2(M)=7$, $b_3(M)=11$ and $b_4(M)=12$ (see \cite{FIL1} for a description of the de Rham cohomology groups of $M$).

The de Rham cohomology class of a generic symplectic $2$-form $\omega$ on $M$ is given by
$$
[\omega]=A\,[e^{15}]+B\,[e^{16}]+C\,[e^{23}]+D\,[e^{24}]+E\,[e^{25}+e^{34}]+F\,[e^{35}]+G\,[e^{78}],
$$
where the coefficients $A,\ldots,G\in \mathbb{R}$ satisfy $BG(E^2-DF)\neq 0$.
Notice that by Remark~\ref{coef-inv-cohom-class} we can suppose without loss of generality that for instance $G=1$.
Also notice that Propositions~\ref{calculo-coef-solvariedades} and~\ref{calculo-filtered-solvariedades}
allow us to reduce the computation of all the symplectic cohomology groups to the level of the Lie algebra $\mathfrak{s}$ of $S$.

The generalized coeffective cohomology groups for $k=3,4$ provide no flexibility since they are topological invariants.
A direct calculation shows that the \emph{\texttt{c}}-flexibility depends only on $\hat c_3$ and, moreover,
$\hat c_3=8$ or $9$ depending on $F\neq 0$ or $F=0$, respectively.

Next we show that $\hat c_3$ is also a key ingredient to obtain the other types of flexibility.
Using Theorem~\ref{rel_armonica_coef}, we have that $h_3-h_7=\hat c_3$.
It turns out that the map $L^3\colon H^1\longrightarrow H^7$ is identically zero, so $h_7=0$ and therefore $h_3=\hat c_3$.
On the other hand, according to Corollary~\ref{rel_armonica_coef-2-cor},
$\check c^{(2)}_6=b_6+h_3-h_6-h_7$, thus $\check c^{(2)}_6=h_3+b_2-h_6$, or equivalently,
$\check c^{(2)}_6= \hat c_3 + \text{dim}\ker \{L^2\colon H^2\longrightarrow H^6\}$.
A direct calculation shows that the dimension of the kernel of the latter $L^2$-map
is independent of the symplectic form and it is equal to 2, therefore $\check c^{(2)}_6= \hat c_3 + 2$.
Recall that $\check c^{(2)}_6$ is by Remark~\ref{comparison with filtered} the dimension
of the primitive cohomology group $PH^3_{d+d^\Lambda} \cong PH^3_{d d^\Lambda}$.

From this general study one concludes that the closed manifold $M$ is \emph{\texttt{c}}-flexible, \emph{\texttt{f}}-flexible and \emph{\texttt{h}}-flexible.
For instance, if we consider the following family of symplectic forms
$$
\omega_t= e^{15} + e^{16} + t\,e^{23} - t\,e^{24} + e^{25}+e^{34} +t\, e^{35} + e^{78} ,\ \quad t\in \mathbb{R},
$$
then:

\smallskip

\noindent $\bullet$ $\hat c_3(\omega_{0})=h_3(\omega_{0})=9$, and $\check c_6^{(2)}(\omega_{0})=11$;

\smallskip

\noindent $\bullet$ $\hat c_3(\omega_{t})=h_3(\omega_{t})=8$, and $\check c_6^{(2)}(\omega_{t})=10$,\, for any $t\neq 0$.

\medskip

\begin{remark}
{\rm
The symplectic manifold $(M,\omega_0)$ is considered in~\cite{FIL1} (see also \cite[page 288]{FIL2}) as an example of
a compact symplectic manifold of dimension 8 for which~\eqref{isotrun} does not hold.
However, the $1$-coeffective cohomology groups were wrongly obtained and the conclusion is not correct.
In fact, one can prove that~\eqref{isotrun} holds for any $t\in \mathbb{R}$, in particular for $t=0$.

\smallskip

On the other hand, by Remark~\ref{k-truncada}, for any symplectic manifold $\M$ satisfying the HLC
(in particular, for any compact K\"ahler manifold)
the $k$-coeffective cohomology group $H^{q}_{(k)}(M)$ is isomorphic to the $[\omega^k]$-truncated de Rham group
$\widetilde{H}^q_{[\omega^k]}(M)=\{[\alpha]\in H^q(M) \mid [\alpha]\cup[\omega^k]=0\}$,
for any $q\geq n-k+2$ and $1\leq k\leq n$.
It is a natural question if such isomorphisms hold for any $k$ for arbitrary symplectic manifolds.
A detailed study of the symplectic manifolds $(M,\omega_t)$ above allows us to conclude that

\smallskip

\noindent $\bullet$ $H^q_{(k)}(M,\omega_t)\cong \widetilde{H}^q_{[\omega_t^k]}(M)$ for $k=1,3,4$ and for any $q\geq 6-k$ and $t\in \mathbb{R}$;

\smallskip

\noindent $\bullet$ $H^q_{(2)}(M,\omega_t)\cong \widetilde{H}^q_{[\omega_t^2]}(M)$ for any $q\geq 4$ if and only if $t\neq 0$.

\medskip

\noindent Therefore, for the compact symplectic manifold $(M,\omega_0)$
we have that the 2-coeffective cohomology is not isomorphic to the $[\omega^2]$-truncated de Rham cohomology.
More precisely, one has that
$H^4_{(2)}(M,\omega_0) \not\cong \widetilde{H}^4_{[\omega_0^2]}(M)$.
}
\end{remark}

\section{Flexibility of symplectic $2$-step nilmanifolds}\label{Sakane-Yamada-sec}

In this section we find symplectic $2$-step nilmanifolds of arbitrary high dimension
which are \emph{\texttt{c}}-flexible, \emph{\texttt{f}}-flexible and \emph{\texttt{h}}-flexible.

\begin{proposition}\label{2-step-nilm}
Let $M=G/\Gamma$ be a $2$-step nilmanifold of dimension $2n \geq 6$, endowed with a continuous family of symplectic forms $\omega_t$.
Then, the harmonic number $h_3(M,\omega_t)$ varies if and only if the coeffective number $\hat c_3(M,\omega_t)$ varies,
if and only if the filtered number $\check c_{2n-1}^{(n-2)}(M,\omega_t)$ varies.
\end{proposition}

\begin{proof}
Let us denote by $\frg$ the Lie algebra of the Lie group $G$.
By \cite[Theorem 3]{Ym}, for any symplectic 2-step nilmanifold
$$
b_1-h_{2n-1}=\dim [\frg,\frg],
$$
which implies that the harmonic number $h_{2n-1}$ does not depend on the symplectic form.
From Theorem~\ref{rel_armonica_coef}
we get $h_{3} - h_{2n-1} = \hat c_{3}$. Therefore, $h_3$ varies if and only if $\hat c_3$ does.


On the other hand, by Definition~\ref{chi-k} and Proposition~\ref{invtopologicos} for $k=n-2$ we have
\begin{equation}\label{ecu-aux}
- \hat c_{3}+ \sum^{2n}_{i=4}(-1)^i \, c_i^{(n-2)}=\chi^{(n-2)}=\sum_{r=3}^{2n-2}(-1)^{r} \, b_{r}.
\end{equation}
Now, from Proposition~\ref{igualdades-general}~(ii) and Proposition~\ref{igualdades}~(i) for $k=n-2$
we get the following identities:
$$
c_{4}^{(n-2)} = \check{c}_{2n-1}^{(n-2)}, \quad \mbox{ and } \quad
c_i^{(n-2)} = b_i  \ \mbox{ for every }\  5\leq i\leq 2n.
$$
Therefore, equation \eqref{ecu-aux} reduces to
$$
- \hat c_{3} + \check{c}_{2n-1}^{(n-2)} + \sum^{2n-2}_{i=5}(-1)^i \, b_i - b_{2n-1}+b_{2n}=-b_3+b_4+\sum_{r=5}^{2n-2}(-1)^{r} \, b_{r},
$$
which implies
$$
\check{c}_{2n-1}^{(n-2)} = \hat c_{3} -b_3+b_4+b_{2n-1}-b_{2n}.
$$
Consequently, the filtered number $\check c_{2n-1}^{(n-2)}(M,\omega_t)$ varies if and only if
the coeffective number $\hat c_3(M,\omega_t)$ varies.
\end{proof}

We apply Proposition~\ref{2-step-nilm} to
a family of examples found by Sakane and Yamada in \cite{SY-proc}. The conclusion is that for any $k \geq 2$,
there is a $6k$-dimensional symplectic nilmanifold which is \emph{\texttt{c}}-flexible, \emph{\texttt{f}}-flexible and \emph{\texttt{h}}-flexible.

\begin{example}\label{Sakane-Yamada}
{\rm
Let $k$ be an integer such that $k \geq 2$.
We consider the $6k$-dimensional compact nilmanifold $M=G/\Gamma$, where $G$ is a simply connected $2$-step nilpotent Lie group
defined by left-invariant $1$-forms $\{\alpha^i,\, \beta^i,\, 1\leq i \leq 3k\}$ satisfying
\begin{equation}\label{str-ecus-2step-nilm}
\left\{\begin{array}{l}
d\alpha^1=\cdots=d\alpha^{3k}=0, \\[4pt]
d\beta^1=\alpha^{1}\wedge\alpha^{2}, \\[4pt]
d\beta^2=\alpha^{2}\wedge\alpha^{3},\\[-1pt]
\ \ \, \vdots\ \\[4pt]
d\beta^{3k-1}=\alpha^{3k-1}\wedge\alpha^{3k}, \\[4pt]
d\beta^{3k}=\alpha^{3k}\wedge\alpha^{1}.
\end{array}\right.
\end{equation}

We denote by $\frg$ the Lie algebra of $G$. Let $\mathfrak{a}^0$ be a complementary vector subspace of the derived
algebra $[\frg,\frg]$ in $\frg$,
and consider $\mathfrak{a}^1=[\frg,\frg]$. So, $\frg=\mathfrak{a}^0\oplus\mathfrak{a}^1$ as a vector space,
and this decomposition induces a bigraduation
$
\bigwedge^r \frg^* = \oplus_{i_0+i_1=r} \bigwedge^{i_0} (\mathfrak{a}^0)^* \otimes \bigwedge^{i_1} (\mathfrak{a}^1)^*,
$
for any $r$. For simplicity, we denote $\bigwedge^{i_0,i_1}=\bigwedge^{i_0} (\mathfrak{a}^0)^* \otimes \bigwedge^{i_1} (\mathfrak{a}^1)^*$.
Hence, for $r=3$ we have
$$
\bigwedge\ ^{\hskip-.2cm 3} \ \frg^*
= \bigwedge\ ^{\hskip-.2cm 3,0} \oplus \bigwedge\ ^{\hskip-.2cm 2,1} \oplus\bigwedge\ ^{\hskip-.2cm 1,2} \oplus\bigwedge\ ^{\hskip-.2cm 0,3}.
$$
Let $Z^3(\frg)$ and $B^3(\frg)$ denote the subspaces of $\bigwedge^3 \frg^*$ consisting of closed and exact 3-forms,
respectively.

For any invariant symplectic form $\omega$ on $M$, we denote by $\mathcal{H}^3(\frg,\omega)$ the space of invariant $\omega$-harmonic
3-forms. Since $M$ is 2-step, by \cite[Theorem 3]{SY-proc} we have that $B^3(\frg) \subset \mathcal{H}^3(\frg,\omega)$.
Hence, the harmonic cohomology group $H^3_\hr(M,\omega)$ satisfies
$$
H^3_\hr(M,\omega)\cong \frac{\mathcal{H}^3(\frg,\omega)}{B^3(\frg) \cap \mathcal{H}^3(\frg,\omega)}\cong \frac{\mathcal{H}^3(\frg,\omega)}{B^3(\frg)}.
$$
Moreover, it is easy to see that
$$
\mathcal{H}^3(\frg,\omega)= \bigwedge\ ^{\hskip-.2cm 3,0} \oplus\, \left(Z^3(\frg) \cap \bigwedge\ ^{\hskip-.2cm 2,1}\right) \oplus\, \left(\mathcal{H}^3(\frg,\omega)\cap \bigwedge\ ^{\hskip-.2cm 1,2}\right),
$$
and
$$
B^3(\frg) \subset \bigwedge\ ^{\hskip-.2cm 3,0} \oplus\, \left(Z^3(\frg) \cap \bigwedge\ ^{\hskip-.2cm 2,1}\right).
$$
Hence, the harmonic number $h_3$
only depends on the dimension of the space $\mathcal{H}^3(\frg,\omega)\cap \bigwedge\ ^{\hskip-.2cm 1,2}$.

Now, let us consider the symplectic form
$\tau = a_{1}\, \alpha^1\wedge\beta^1 + \cdots + a_{3k}\, \alpha^{3k}\wedge\beta^{3k}$, where $a_{1},\ldots, a_{3k} \not=0$.
A direct calculation using \eqref{str-ecus-2step-nilm} shows that
$$
\dim \left(\mathcal{H}^3(\frg,\tau)\cap \bigwedge\ ^{\hskip-.2cm 1,2}\right)=0.
$$

Let $\sigma = b_{1}(\alpha^1\wedge\beta^2 - \alpha^3\wedge\beta^1)
+ \cdots +
b_{3k-2}(\alpha^{3k-2}\wedge\beta^{3k-1} - \alpha^{3k}\wedge\beta^{3k-2})
+ b_{3k-1}(\alpha^{3k-1}\wedge\beta^{3k} - \alpha^{1}\wedge\beta^{3k-1})$. One can choose $b_{1},\ldots, b_{3k-1}$ such that
$\sigma$ is non-degenerate (for more details see \cite{SY-proc}) and, since it is closed, $\sigma$ defines another symplectic form on $M$.
Using again the structure equations \eqref{str-ecus-2step-nilm} one can prove that
$\mathcal{H}^3(\frg,\sigma)\cap \bigwedge\ ^{\hskip-.2cm 1,2}=\langle \alpha^{j+1}\wedge\beta^{j}\wedge\beta^{j+1} \ ; \  j=1,\ldots,3k \rangle$,
which implies
$$
\dim \left(\mathcal{H}^3(\frg,\sigma)\cap \bigwedge\ ^{\hskip-.2cm 1,2}\right)=3k.
$$

We take $\varepsilon>0$ sufficiently small so that the closed 2-form $\omega_t\colon=\sigma+t\, \tau$ is non-degenerate
for any $t \in [0,\varepsilon]$.
The space $\mathcal{H}^3(\frg,\omega_t)\cap \bigwedge\ ^{\hskip-.2cm 1,2}$ has dimension $3k$ for $t=0$, and dimension $0$ for $t\not=0$.
This proves that the harmonic cohomology group $H^3_\hr(M,\omega_t)$ varies with $t$.
Furthermore, since the harmonic number $h_3(M,\omega_t)$ varies, by Proposition~\ref{2-step-nilm} we get that the coeffective number $\hat c_3(M,\omega_t)$
and the filtered number $\check c_{2n-1}^{(n-2)}(M,\omega_t)$ also vary with $t$.
}
\end{example}

\noindent{\textbf{Acknowledgments}}.
This work has been partially supported by the projects MTM2017-85649-P (AEI/FEDER, UE), and
E22-17R ``Algebra y Geometr\'{\i}a'' (Gobierno de Arag\'on/FEDER).

\medskip

\end{document}